\newcommand{\p}{^}
\newcommand{\X}[1]{X\p{(#1)}}
\newcommand{\XX}[1]{{\tilde{X}}\p{(#1)}}
\newcommand{\N}[1]{X\p{(#1)}}
\newcommand{\pb}[2]{
	\ensuremath{\langle #1,#2 \rangle}}
	\newcommand{\cv}[2]{
	\ensuremath{[#1,#2]}}
\newcommand{\dis}[1]{\displaystyle{#1}}	
\newcommand{\cl}{\textnormal{cl}}
\renewcommand{\d}[1]{{\rm d}#1}
\newcommand{\wt}{\widetilde}
\newcommand{\ol}{\overline}
\newcommand{\rmd}{\mathrm{d}}
\newcommand{\rme}{\mathrm{e}}
\newcommand{\m}{\mathrm{m}}
\newcommand{\M}{\mathrm{M}}
\newcommand{\Lrm}{\mathrm{L}}
\newcommand{\Ms}{\overline{\mathrm{M}}}
\newcommand{\Ls}{\overline{L}}
\newcommand{\Ns}{\overline{N}}
\newcommand{\Ws}{\mathrm{W}\p\sig}
\newcommand{\Var}{\mathrm{Var}}
\newcommand{\Bscr}{\mathscr{B}}
\newcommand{\Cscr}{\mathscr{C}}
\newcommand{\Escr}{\mathscr{E}}
\newcommand{\Fscr}{\mathscr{F}}
\newcommand{\Gscr}{\mathscr{G}}
\newcommand{\Hscr}{\mathscr{H}}
\newcommand{\Jscr}{\mathscr{J}}
\newcommand{\Kscr}{\mathscr{K}}
\newcommand{\Lscr}{\mathscr{L}}
\newcommand{\Nscr}{\mathscr{N}}
\newcommand{\Pscr}{\mathscr{P}}
\newcommand{\Rscr}{\mathscr{R}}
\newcommand{\Sscr}{\mathscr{S}}
\newcommand{\Tscr}{\mathscr{T}}
\newcommand{\Vscr}{\mathscr{V}}
\newcommand{\Xscr}{\mathscr{X}}
\newcommand{\Yscr}{\mathscr{Y}}
\newcommand{\Zscr}{\mathscr{Z}}
\newcommand{\zt}{\zeta}
\newcommand{\et}{\eta}
\newcommand{\al}{\alpha}
\newcommand{\bt}{\beta}
\newcommand{\ep}{\varepsilon}
\newcommand{\lm}{\lambda}
\newcommand{\Lm}{\Lambda}
\newcommand{\sig}{\sigma}
\newcommand{\om}{\omega}
\newcommand{\Om}{\Omega}
\newcommand{\cadlag}{c\`adl\`ag }
\newcommand{\Span}{\mathrm{Span}}
\newcommand{\Ebb}{\mathbb{E}}
\newcommand{\EE}{\mathbb{E}}
\newcommand{\Fbb}{\mathbb{F}}
\newcommand{\Nbb}{\mathbb{N}}
\newcommand{\Pbb}{\mathbb{P}}
\newcommand{\Rbb}{\mathbb{R}}
\newcommand{\Zbb}{\mathbb{Z}}
\newcommand{\mc}[1]{\mathcal{#1}}
\newcommand{\mb}[1]{\mathbb{#1}}
\newcommand{\scr}[1]{\mathscr{#1}}
\newcommand{\fraco}[2]{\genfrac{}{}{0pt}{}{#1}{#2}}
\newcommand{\bi}{\begin{itemize}}
\newcommand{\ei}{\end{itemize}}
\newcommand{\bg}{\begin}
\newcommand{\e}{\end}
\newcommand{\be}{\begin{enumerate}}
\newcommand{\ee}{\end{enumerate}}
\newcommand{\beq}{\begin{equation}}
\newcommand{\eeq}{\end{equation}}
\newcommand{\beqs}{\begin{equation*}}
\newcommand{\eeqs}{\end{equation*}}
\newcommand{\beqa}{\begin{eqnarray}}
\newcommand{\eeqa}{\end{eqnarray}}
\newcommand{\beqas}{\begin{eqnarray*}}
\newcommand{\eeqas}{\end{eqnarray*}}
\newcommand{\aP}[1]{
	\ensuremath{\langle #1,#1 \rangle}}
\newcommand{\aPP}[2]{\ensuremath{\langle #1,#2 \rangle}}
\newtheorem{theorem}{Theorem}[section]
\newtheorem{lemma}[theorem]{Lemma}
\newtheorem{proposition}[theorem]{Proposition}
\newtheorem{corollary}[theorem]{Corollary}
\theoremstyle{definition}
\newtheorem{definition}[theorem]{Definition}
\newtheorem{assumption}[theorem]{Assumption}
\newtheorem{remark}[theorem]{Remark}
\begin{document}
\title{The Chaotic Representation Property of Compensated-Covariation Stable Families of Martingales\textsuperscript{\,\scriptsize{a,b}}}
\author{P.\ Di Tella\textsuperscript{\,\scriptsize c},\ \ H.-J.\ Engelbert\textsuperscript{\,\scriptsize d}}
\maketitle
\renewcommand{\thefootnote}{\alph{footnote}}
\footnotetext[1]{\,Work financially supported by the European Community's FP 7 Program under contract PITN-GA-2008-213841, Marie Curie ITN ``Controlled Systems''.}
\footnotetext[2]{\,The first author gratefully acknowledges financial support by Prof.\ Dr.\ Bj\"orn Schmalfu\ss\ of Friedrich-Schiller-Universit\"at Jena.}
\footnotetext[3]{\,E-Mail: Paolo.Di\_Tella@tu-dresden.de. Address: Technische Universit\"at Dresden, Institut f\"ur Stochastik, Zellescher Weg 12--14, 01069 Dresden.}
\footnotetext[4]{\,E-Mail: Hans-Juergen.Engelbert@uni-jena.de. Address: Friedrich-Schiller-Universit\"at Jena, Institut f\"ur Mathematik, Ernst-Abbe-Platz 2 07743.}
\renewcommand{\abstractname}{}
\begin{abstract}

\noindent \small In the present paper, we study the chaotic representation property for certain families $\Xscr$ of square integrable martingales on a finite time interval $[0,T]$. For this purpose, we introduce the notion of compensated-covariation stability of such families. The chaotic representation property will be defined using iterated integrals with respect to a given family $\Xscr$ of square integrable martingales having deterministic mutual predictable covariation $\aPP{X}{Y}$ for all $X,Y\in\Xscr$. The main result of the present paper is stated in Theorem \ref{thm:crp.ccs.fam} below: If $\Xscr$ is a compensated-covariation stable family of square integrable martingales such that $\aPP{X}{Y}$ is deterministic for all $X,Y\in\Xscr$ and, furthermore, the system of monomials generated by $\Xscr$ is total in $L\p2(\Om,\Fscr\p\Xscr_T,\Pbb)$, then $\Xscr$ possesses the chaotic representation property with respect to the $\sigma$-field $\Fscr\p\Xscr_T$. We shall apply this result 
 to the case o
 f L\'evy processes. Relative to the filtration $\Fbb\p L$ generated by a L\'evy process $L$, we construct families of martingales which possess the chaotic representation property. As an illustration of the general results, we will also discuss applications to continuous Gaussian families of martingales and independent families of compensated Poisson processes. We conclude the paper by giving, for the case of L\'evy processes, several examples of concrete families $\Xscr$ of martingales including Teugels martingales.
\end{abstract}

%
%

\def\thefootnote{\arabic{footnote}}

\section{Introduction}\label{sec:intro}
In his paper \cite{Wi38}, Norbert Wiener introduced the notion of \emph{multiple integral} and called it \emph{polynomial chaos}. However, the Wiener polynomials chaos of different order are not orthogonal. In \cite{I51}, It\^o gave another definition of multiple integrals for a general \emph{normal random measure} in such a way that  the orthogonality property is achieved. In the same paper, It\^o established the relation between orthogonal Hermite polynomials and  multiple integrals. This was also done by \citet{CM47} for the special normal random measure induced by a Wiener process. Using this relation, It\^o proved that every square integrable functional of a normal random measure can be expanded as an orthogonal sum of multiple integrals. This property is known in the literature as \emph{chaotic representation property} (CRP). A similar result was shown in \citet{Ka50}. In conclusion of \cite{I51}, It\^o pointed out that the multiple integrals of a normal random measure induced 
 by a Wiener process $W$ can be regarded as \emph{iterated stochastic integrals} with respect to $W$. In the later paper \cite{I56}, which appeared in 1956, It\^o generalized the result of \cite{I51} stated for a normal random measure, and in particular for the Wiener process, to the case of an \emph{orthogonal} random measure (cf.\ \citet{GS74}, Chapter IV, \S\ 4) defined as a sum of a normal random measure and a \emph{compensated Poisson random measure}. These random measures are associated with processes with independent increments. For such a random measure, \citet{I56} introduced multiple integrals and proved a chaos decomposition for square integrable functionals. We call the multiple integrals introduced in \cite{I56} multiple It\^o integrals. In \cite{I56} no relation between multiple It\^o integrals and iterated stochastic integrals is given. However, we note that the relation between multiple It\^o integrals and polynomials is established by \citet{SK76}.

Multiple It\^o integrals have been extensively studied for many decades. A self-contained monograph about It\^o-type integrals for \emph{completely random measures}, with particular attention to their combinatorial structure, is \citet{PT11}. In \cite{PT11} the authors also discuss the CRP for centred Gaussian measures and compensated Poisson random measures (which are special cases of completely random measures) and study the relation of multiple integrals with Hermite polynomials for the Gaussian case and Charlier polynomials for the Poisson case.

For a stochastic process $X$ we denote by $\Fbb\p X$ the smallest filtration satisfying the usual conditions such that $X$ is adapted.

Let now $X$ be a square integrable martingale on the finite horizon $[0,T]$, $T>0$. To approach the problem if the martingale $X$ possesses the CRP with respect to the space $L\p2(\Om,\Fscr\p X_T,\Pbb)$, it is necessary to define \textit{multiple integrals} with respect to $X$. In general, this is not an easy task because it is not clear how to associate a suitable random measure with $X$ allowing to introduce multiple integrals. For this reason it is more convenient to introduce \textit{iterated stochastic integrals} with respect to $X$. But this, as observed in \citet{M76}, p.\ 321--331, is in general not straightforward.  However, if $X$ is a square integrable martingale such that $\aPP{X}{X}_t=t$, $t\geq0$, and $F$ is a square integrable deterministic function on $[0,T]\p n$, then the $n$-fold iterated integral $J_n(F)_T$ of $F$ up to time $T$ is well defined (cf.\ \citet{M76}, p.\ 325--327). Note that for $n\neq m$, $J_n(F)_T$ and $J_m(G)_T$ are orthogonal. We recall that the so
  called Az\'ema--Yor martingales, which were introduced in \citet{A85} and \citet{AY89}, are of this type. Using the \emph{structure equation} as a tool, Emery \cite{E89} has shown that some of the Az\'ema--Yor martingales possess the CRP, meaning that $L\p2(\Om,\Fscr_T\p X,\Pbb)$ allows a decomposition into the orthogonal sum of the linear subspaces of $n$-fold iterated integrals associated with $X$.

In this paper we deal with the CRP of certain families $\Xscr$ of square integrable martingales instead of only single processes $X$. We shall restrict ourselves to a finite time horizon $[0,T]$, $T>0$, and to the filtration $\Fbb\p\Xscr$, that is, the smallest filtration satisfying the usual conditions with respect to which $\Xscr$ is a family of adapted processes.
The study of the CRP for families $\Xscr$ of square integrable martingales turns out to be of major interest. This is because the CRP for a single square integrable martingale is a strong property and a relatively small class of processes possesses it.

One important example of a family of square integrable martingales possessing the CRP has been considered by \citet{NS00}. Let $L$ be a L\'evy process with L\'evy measure $\nu$. Under the assumption that $\nu$ has a finite exponential moment outside the origin, the authors define the family of orthogonalized Teugels martingales, which is a family consisting of countably many orthogonal square integrable $\Fbb\p L$-martingales. Then the system of iterated integrals generated by the orthogonalized Teugels martingales is introduced and it is shown that this family of martingales possesses the CRP on $L\p2(\Om,\Fscr\p L_T,\Pbb)$ (for the precise definition of the CRP see Definition \ref{def:crp} below). Notice that in \citet{NS00} the assumption on the L\'evy measure is rather strong. Furthermore in \citet{NS00} the relationship between the iterated integrals generated by the orthogonalized Teugels martingales and the multiple It\^o integrals introduced in \cite{I56} is not studied. The 
 problem was also mentioned in \citet{SUV05}, where in Proposition 7 the relationship between the iterated integrals generated by the orthogonalized Teugels martingales and the multiple It\^o integrals is stated without proof.

The aim of this paper is to study the CRP for certain families $\Xscr$ of square integrable martingales such that the process $\aPP{X}{Y}$ is deterministic whenever $X$ and $Y$ belong to $\Xscr$. Note that the martingales in $\Xscr$ need not have independent increments: If a square integrable martingale $X$ has independent increments, then $\aPP{X}{X}$ is deterministic, the converse is however not true. Most of normal martingales $X$ (i.e., square integrable martingales $X$ such that $\aP{X}_t=t$, $t\geq 0$) and, in particular, solutions of the structure equation do not have independent increments (cf.\ \citet{E89} where the very special case of independent increments is discussed on p.\ 74). For the family $\Xscr$ as above the iterated integrals can be defined, and we shall look for sufficient conditions to ensure that $\Xscr$ possesses the CRP as it will be introduced in Definition \ref{def:crp} below. We shall require that the family $\Xscr$ is \emph{compensated-covariation stable
 }, i.e., that for every $X,Y\in\Xscr$ the process $[X,Y]-\aPP{X}{Y}$ again belongs to $\Xscr$, $[X,Y]$ denoting the covariation process of $X$ and $Y$. This property of compensated-covariation stability of families of martingales has been introduced in \citet{DT13} and \citet{DE13}, where the \emph{predictable representation property} is studied.

In Section \ref{sec:prel} we recall some basic definitions and notations from stochastic analysis needed in the following sections of the paper.

Given a family $\Xscr$ of square integrable martingales with deterministic predictable covariation $\aPP{X}{Y}$ for all $X$ and $Y$ from $\Xscr$, in Section \ref{sec:def.it.int} the iterated integrals are introduced and their properties are studied. Furthermore the definition of the CRP for such kind of families is given.

In Section \ref{sec:ccs.f.mul.int} some more important properties of the iterated stochastic integrals are obtained under the further assumption that $\Xscr$ is a compensated-covariation stable family.

The main result of this paper is proven in Section \ref{sec:chaos} (see Theorem \ref{thm:crp.ccs.fam} below): If $\Xscr$ is a compen\-sated-covariation stable family of square integrable martingales such that $\aPP{X}{Y}$ is deterministic for all $X,Y\in\Xscr$, and furthermore the system of monomials generated by $\Xscr$ is total in $L\p2(\Om,\Fscr\p\Xscr_T,\Pbb)$, then $\Xscr$ possesses the CRP.

Section \ref{sec:prp.lev} is devoted to applications of the general results established in the previous sections to L\'evy processes. Let $L$ be a L\'evy process with L\'evy measure $\nu$ and Gaussian part $\Ws$, where $\Ebb[(\Ws_t)\p2]=\sig\p2 t$, $\sig\p2\geq0$, and $\mu:=\sig\p2\delta_0+\nu$, $\delta_0$ being the Dirac measure in zero. With a deterministic function $f$ in $L\p2(\mu)$ we associate a square integrable martingale $\X{f}$ by setting $\X{f}_t:=f(0)\Ws_t+\Ms(1_{[0,t]}f)$, where $\Ms(1_{[0,t]}f)$ denotes the stochastic integral of $1_{[0,t]}f$ with respect to the compensated Poisson random measure $\Ms$ associated with the jumps of $L$. We prove that for a system $\Tscr$ in $L\p2(\mu)$ the associated family $\Xscr_\Tscr:=\{\X{f},\ f\in\Tscr\}$ possesses the CRP on $L\p2(\Om,\Fscr\p L_T,\Pbb)$ \emph{if and only if} $\Tscr$ is total (i.e., its linear hull is dense) in $L\p2(\mu)$ (cf. Theorem \ref{thm:tot.sys.h2.prp} below). A particularly important situation occurs when $
 \Tscr$ is a complete orthogonal system: In this case $\Xscr_\Tscr$ is a family of orthogonal martingales and we shall see that this simplifies the CRP considerably. This is a major generalisation of \citet{NS00} because we are able to construct a great variety of families of martingales possessing the CRP for \textit{any} L\'evy process, without any assumption on the L\'evy measure, therefore also in the case if Teugels martingales cannot be introduced.
Then, for a total system $\Tscr$ in $L\p2(\mu)$, we investigate the relationship between the iterated integrals generated by $\Xscr$ and the multiple It\^o  integrals as well as between the CRP and the chaos expansion obtained in \citet{I56}. 

Finally, as an illustration of the general results in Section \ref{sec:appl} several applications will be given. We start with Gaussian families of continuous local martingales and pass on to independent families of Poisson processes. Then we proceed with examples for concrete families of martingales constructed from L\'evy processes, including the family of Teugels martingales as a particular case.

\section{Basic Definitions and Notations}\label{sec:prel}
In this section we recall some basic definitions and notations from stochastic analysis needed in the following sections of the paper. By $(\Om,\Fscr,\Pbb)$ we denote a \emph{complete} probability space and by $\Fbb$ a filtration satisfying the \emph{usual conditions}. We shall always consider real-valued stochastic processes on a finite time horizon $[0,T]$, $T>0$. 

With a c\`adl\`ag process $X=(X_{\,t})_{t\in[0,T]}$, we associate the process $X_{-}=(X_{\,t-})_{t\in[0,T]}$ setting $X_{\,0-}=0$ and $X_{\,t-}=\lim_{s\uparrow t}X_{\,s}$, $t>0$. The process $\Delta X=(\Delta X_{\,t})_{t\in[0,T]}$, $\Delta X_{\,t}:=X_{\,t}-X_{\,t-}$, $t\in[0,T]$, is called  \emph{the jump process} of $X$. Because of the definition of $X_{0-}$, we always have $\Delta X_{\,0}=X_0$.

In the present paper, $\Fbb$-martingales are always assumed to be c\`adl\`ag. For a martingale $X$, by $X\p c$ and $X\p d$ we denote the continuous and the purely discontinuous martingale part of $X$, respectively (cf.\ \citet{JS00}, Theorem I.4.18). We recall that $X\p c_0=X\p d_0=0$.

We say that a martingale $X$ is \emph{square integrable} if $X_{\,T}\p 2$ is integrable. Because of Doob's inequality, this is equivalent to require that $\sup_{t\in[0,T]}X_{\,t}\p2$ is integrable. By $\Hscr\p{\,2}=\Hscr\p{\,2}(\Fbb)$ we denote the set of square integrable martingales and by $\Hscr_0\p2$ the subspace of the elements of $\Hscr\p 2$ starting at $0$. We set $\|X\|_{\Hscr\p{\,2}}:=\|X_T\|_2$, where $X\in\Hscr\p{\,2}$ and $\|\cdot\|_2$ denotes the $L\p{\,2}(\Pbb):=L\p{\,2}(\Om,\Fscr,\Pbb)$-norm. For $X,Y\in\Hscr\p{\,2}$ we put $(X,Y)_{\Hscr\p{\,2}}:=\Ebb[X_T Y_T]$ which defines a scalar product on $\Hscr\p{\,2}$.
We can identify $(\Hscr\p{\,2},\|\cdot\|_{\Hscr\p{\,2}})$ with the space $(L\p{\,2}(\Pbb),\|\cdot\|_2)$.

If $X$ and $Y$ belong to $\Hscr\p{\,2}$, we say that they are \emph{orthogonal} and write $X\bot Y$ if their product $XY$ is a martingale with $X_0Y_0=0$ (cf.\ \cite{J79}, Definition 2.10).
If $\Xscr\subseteq\Hscr\p2$, we say that $Y\in\Hscr\p2$ is orthogonal to $\Xscr$ if $Y\bot\,X$ for every $X\in\Xscr$. We stress that if $X,Y\in\Hscr\p{\,2}$ are orthogonal, then $X_{\,t}$ and $Y_{\,t}$ are orthogonal in $L\p2(\Pbb)$, for every $t\in[0,T]$. However, the converse is, in general, not true (cf.\ \citet{P05}, p.\ 181).

By $\Vscr$ we denote the set of adapted \cadlag processes with paths of finite variation on $[0,T]$. If $A\in\Vscr$, then we say that $A$ is a \emph{process of finite variation}. If $A\in\Vscr$, by $\Var(A)=(\Var(A)_{t})_{t\in[0,T]}$ we denote the variation process of $A$. We say that $A$ is of integrable variation if $\Var(A)_T$ is integrable.

For $A\in\Vscr$ we shall make use of the Riemann--Stieltjes integral of a measurable process $H$ with respect to $A$ (cf.\ \cite{JS00}, I, \S\ 3a):
If $\int_0\p t|H_{\,s}(\om)|\,\rmd \Var(A)_{\,s}(\om)<+\infty$, we use the notation $H\cdot A_{\,t}(\om):=\int_0\p tH_{\,s}(\om)\,\rmd A_{\,s}(\om)$ and otherwise $H\cdot A_{\,t}(\om):=+\infty$. We write $H\cdot A=(H\cdot A_{\,t})_{t\in[0,T]}$ for the integral process.
We stress that if $A\in\Vscr$ and $H$ is a measurable process, then $H\cdot A$ belongs to $\Vscr$ if and only if $H\cdot A_{\,t}(\om)$ is finite-valued, i.e., $\int_0\p t|H_{\,s}(\om)|\,\rmd \Var(A)_{\,s}(\om)<+\infty$, for every $t\in[0,T]$ and $\om\in\Om$.

If $X$ and $Y$ belong to $\Hscr\p{\,2}$ there exists a unique predictable process of integrable variation, denoted by $\pb{X}{Y}$ and called the \emph{predictable covariation} of $X$ and $Y$, such that $\pb{X}{Y}_0=0$ and $XY-\pb{X}{Y}$ is a martingale (cf.\ \citet{JS00}, Theorem I.4.2). Clearly $\Ebb[X_T Y_T-X_0Y_0]=\Ebb[\pb{X}{Y}_{\,T}]$ and $X,Y\in\Hscr\p{\,2}$ are orthogonal if and only if $\pb{X}{Y}=0$ and $X_0Y_0=0$. 

Let $(X,\Fbb)$ be a semimartingale with decomposition $X=X_{\,0}+M+A$, where $M$ (without loss of generality) is \emph{locally} in $\Hscr\p{\,2}_0$, $A\in\Vscr$ with $A_0=0$, and $X_{\,0}$ is $\Fscr_0$-measurable. The continuous martingale part of $X$, denoted by $X\p c$, is defined by $X\p c:=M\p c$. Note that $X\p c$ does not depend on the semimartingale decomposition (cf.\ \citet{JS00}, Proposition I.4.27). With two semimartingales $X$ and $Y$, we associate the process $[X,Y]$, called covariation of $X$ and $Y$, defining
\begin{equation}\label{def:cov}
[X,Y]_{\,t}:=\pb{X\p c}{Y\p c}_{\,t}+\sum_{0\leq s\leq t}\Delta X_{\,s}\Delta Y_{\,s},\quad t\in[0,T].
\end{equation}
It is well-known that the process $[X,Y]$ belongs to $\Vscr$ (cf.\ \citet{J79}, Theorem 2.30). We remark that the definition \eqref{def:cov} of the covariation $[X,Y]$ implies that $[X,Y]_0=X_0Y_0$.
If $X,Y\in\Hscr\p{\,2}$, then $[X,Y]$ is of integrable variation and $\pb{X}{Y}$ is the compensator of $[X,Y]$, i.e., $\pb{X}{Y}$ is the unique predictable process of integrable variation starting at zero such that $[X,Y]-\pb{X}{Y}$ is a martingale.

Now we are going to recall the stochastic integral with respect to a martingale $X\in\Hscr\p{\,2}$. The space of integrands for $X$ is given by
$
\Lrm\p{2}(X):=\{H\textnormal{ predictable}: \Ebb[H\p{\,2}\cdot\pb XX_T]<+\infty\}
$.
For $X\in\Hscr\p{\,2}$ and $H\in\Lrm\p{2}(X)$, by $H\cdot X$ we denote the stochastic integral of $H$ with respect to $X$. The stochastic integral of $H$ with respect to $X$ is characterized as it follows: Let $Z\in\Hscr\p2$. Then $Z=H\cdot X$ if and only if $Z_0=H_0X_0$ and $\pb ZY=H\cdot\pb XY$, for every $Y\in\Hscr\p{\,2}$. We stress that, if $X,Y\in\Hscr\p{\,2}$ are orthogonal martingales, then also $H\cdot X$ and $K\cdot Y$ are orthogonal martingales of $\Hscr\p{\,2}$, for all $H\in\Lrm\p{2}(X)$ and $K\in\Lrm\p{2}(Y)$. The notation $H\cdot X$ is not ambiguous with the one introduced for the stochastic integral with respect to a process of finite variation: If $X\in\Hscr\p{\,2}\cap\Vscr$ then $H\cdot X$ coincides with the Riemann--Stieltjes integral (cf.\ \citet{J79}, Remark 2.47). 
 
For a set $\Kscr$ of a Banach space $(\Hscr,\|\cdot\|)$, by $\Span(\Kscr)$ we denote the linear hull of $\Kscr$ and by  $\cl(\Kscr)_{\Hscr}$ the closure of $\Kscr$ in $\Hscr$.

\section{Iterated Integrals and Chaotic Representation Property}\label{sec:def.it.int}
Let $\Xscr\subseteq\Hscr\p{\,2}(\Fbb)$ be a family of $\Fbb$-martingales. For notational convenience, we represent $\Xscr$ in parametric form: $\Xscr:=\{\N{\al},\ \al\in \Lm\}$, where $\Lm$ is an associated index set. In this paper, from now on we shall always assume that $\aPP{\X{\al}}{\X{\bt}}$ is deterministic, $\al,\bt\in\Lm$. For such a family we are going to introduce the \emph{iterated integrals}.

Let $F_0$ be a bounded $\mc F_0$-measurable function and $F_1,\ldots, F_n$  bounded measurable functions on the measurable space $([0,T],\Bscr([0,T]))$. We denote by $F:=F_0\otimes\cdots\otimes F_n=\otimes_{k=0}^n F_k$ the \emph{tensor product} of $F_0,\ldots,F_n$ defined on $\Om\times[0,T]^n$ and say that $F$ is an \emph{elementary function} of order $n$.

\begin{definition}\label{elem.it.int.}
Let $\al_1, \ldots, \al_n\in \Lm$ be given and $F=F_0\otimes\cdots\otimes F_n$ be an elementary function of order $n$. 
The \emph{elementary} iterated integral $J_0(F_0)$ of order zero of $F_0$ and $J_n\p{(\al_1,\ldots, \al_{n})}(F)$ of order $n$ of $F$ ($n\geq 1$) with respect to the martingales ($\X{\al_1}, \ldots\ , \X{\al_n}$) is defined inductively as follows: If $n=0$, then the process $J_0(F_0):=(J_0(F_0)_{\,t})_{t\in [0,T]}$ is defined by $J_0(F_0)_t=F_0$ for $t\in[0,T]$ and, for all $1\leq m\leq n$,
\begin{equation}\label{eq:def.sim.mul.int}
J_{m}\p{(\al_1,\ldots, \al_{m})}(F_0\otimes\cdots\otimes F_{m})_{\,t}
:=\int_0^t J_{m-1}^{(\al_1,\ldots, \al_{m-1})}(F_0\otimes\cdots\otimes F_{m-1})_{u-}\,F_{m}(u)\, \rmd {\X{\al_{m}}_u},\ \ t\in [0,T]\,.
\end{equation}
\end{definition}
In the following lemma we establish some important properties of the iterated integrals.
\begin{lemma}\label{lem:isom.sim.mul.int} Let $n\geq0$, $\al_1,\ldots,\al_n\in\Lm$ and $F=\otimes_{k=0}^n F_k$ be an elementary function of order $n$.

\textnormal{(i)} The elementary iterated integral $J_{n}\p{(\al_1,\ldots,\al_n)}(F)$ belongs to $\Hscr\p{\,2}$ for $n=0$ and to $\Hscr\p{\,2}_0$ for $n\geq1$.

\textnormal{(ii)} Let moreover be $m\geq0$, $\bt_1, \ldots, \bt_m\in \Lm$ and $G=\otimes_{k=0}^m G_k$ an elementary function of order $m$. Then we have $\Ebb\big[J_{n}^{(\al_1,\ldots, \al_{n})}(F)_{\,t}\,J_{m}^{(\bt_1, \ldots, \bt_m)}(G)_{\,t}\big|\Fscr_0\big]=0$, $t\in[0,T]$, if $n\neq m$, while, if $m=n$,
\beqa\nonumber
\lefteqn{
\Ebb\big[J_{n}^{(\al_1,\ldots, \al_{n})}(F)_{\,t}J_{n}^{(\bt_1, \ldots, \bt_n)}(G)_{\,t}\big|\Fscr_0\big]}\\
\label{eq:IRmix.el}&=&F_0\,G_0\displaystyle\int_0\p t\int_0\p {t_n-}\!\!\!\!\!\!\!\!\!\cdots\int_0\p{t_2-}\!\!\!\! F_1(t_1)G_1(t_1)\ldots F_n(t_n)G_n(t_n)\,\rmd\aPP{\X{\al_1}}{\X{\bt_1}}_{t_1}\ldots
\rmd\aPP{\X{\al_n}}{\X{\bt_n}}_{t_n}\,.
\eeqa
\end{lemma}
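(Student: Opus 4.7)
I would prove (i) and (ii) together by induction on the order $n$, establishing at each stage part (i) at order $n$ and part (ii) for all $m \leq n$ (the case $m>n$ follows by symmetry) against the inductive hypothesis at order $n-1$. The base case $n = 0$ is immediate: $J_0(F_0) \equiv F_0$ is $\Fscr_0$-measurable and bounded, so $J_0(F_0) \in \Hscr^{\,2}$, and \eqref{eq:IRmix.el} reduces to $\Ebb[F_0 G_0 | \Fscr_0] = F_0 G_0$ with empty iterated integrals interpreted as $1$. The orthogonality case $n = 0$, $m \geq 1$ will follow from $\Ebb[F_0 J_m(G)_t | \Fscr_0] = F_0 \Ebb[J_m(G)_t | \Fscr_0] = 0$ as soon as $J_m(G) \in \Hscr_0^{\,2}$ has been established.

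For the inductive step of (i), I set $H_u := J_{n-1}^{(\al_1,\ldots,\al_{n-1})}(F_0 \otimes \cdots \otimes F_{n-1})_{u-}\, F_n(u)$; this process is predictable (left-limit of an adapted \cadlag process, times a deterministic Borel function) and, by Doob's maximal inequality together with the inductive hypothesis, $\Ebb[\sup_u J_{n-1}(\cdot)_u^{\,2}] \leq 4\|J_{n-1}(\cdot)\|_{\Hscr^{\,2}}^{\,2} < \infty$. Because $F_n$ is bounded and $\aPP{\X{\al_n}}{\X{\al_n}}_T$ is a finite deterministic number, this forces $H \in \Lrm^{\,2}(\X{\al_n})$, whence $J_n^{(\al_1,\ldots,\al_n)}(F) = H \cdot \X{\al_n} \in \Hscr_0^{\,2}$.

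For the inductive step of (ii), I fix $n \geq 1$ and $m \leq n$, writing $J_n(F) = J_n^{(\al_1,\ldots,\al_n)}(F)$ and $J_m(G) = J_m^{(\bt_1,\ldots,\bt_m)}(G)$ for brevity. Since both processes lie in $\Hscr^{\,2}$ with $J_n(F)_0 = 0$, the martingale $J_n(F)J_m(G) - \aPP{J_n(F)}{J_m(G)}$ starts at $0$, so $\Ebb[J_n(F)_t J_m(G)_t|\Fscr_0] = \Ebb[\aPP{J_n(F)}{J_m(G)}_t|\Fscr_0]$. For $m \geq 1$, the characterisation of the stochastic integral recalled in Section \ref{sec:prel} yields
\[
\aPP{J_n(F)}{J_m(G)}_t \;=\; \int_0^t J_{n-1}(F)_{u-}\, J_{m-1}(G)_{u-}\, F_n(u)\, G_m(u)\,\rmd\aPP{\X{\al_n}}{\X{\bt_m}}_u ,
\]
while for $m = 0$ the bracket vanishes outright because $J_0(G) = G_0$ is $\Fscr_0$-measurable and $J_n(F) \in \Hscr_0^{\,2}$, giving $\Ebb[J_n(F)_t G_0 | \Fscr_0] = 0$. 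Because $\aPP{\X{\al_n}}{\X{\bt_m}}$ is deterministic, conditional Fubini (justified by the boundedness of $F_n, G_m$ and the $L^2$-bounds obtained in (i)) lets me bring $\Ebb[\cdot|\Fscr_0]$ inside the Stieltjes integral, reducing the problem to computing $\Ebb[J_{n-1}(F)_{u-} J_{m-1}(G)_{u-}|\Fscr_0]$. Invoking the inductive hypothesis together with dominated convergence (with $\sup_s J_{n-1}(F)_s^{\,2}$ and $\sup_s J_{m-1}(G)_s^{\,2}$ as Doob envelopes for passage to the left-limit in $u$), this integrand is $0$ when $n \neq m$ and, when $n = m$, equals precisely the left-limit in $u$ of the expression \eqref{eq:IRmix.el} at order $n-1$; relabelling $t_n := u$ then reproduces \eqref{eq:IRmix.el} at order $n$.

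The main obstacle will be precisely this $u-$ versus $u$ bookkeeping: I must identify $\Ebb[J_{n-1}(F)_{u-} J_{m-1}(G)_{u-}|\Fscr_0]$ with the left-limit of the corresponding conditional expectation evaluated at time $u$, and I must legitimately pull $\Ebb[\cdot|\Fscr_0]$ through the Stieltjes integral against $\aPP{\X{\al_n}}{\X{\bt_m}}$. Both manipulations rest squarely on the standing hypothesis that every $\aPP{\X{\al}}{\X{\bt}}$ is deterministic, without which neither the conditional Fubini step nor the clean deterministic recursion in \eqref{eq:IRmix.el} would be available.
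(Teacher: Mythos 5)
Your proposal is correct and follows essentially the same route as the paper: induction reducing $\Ebb[J_nJ_m\,|\,\Fscr_0]$ to $\Ebb[\aPP{J_n}{J_m}\,|\,\Fscr_0]$, the recursion for the bracket of stochastic integrals, interchange of $\Ebb[\cdot\,|\,\Fscr_0]$ with the deterministic Stieltjes integral, and a dominated-convergence argument with Doob envelopes to pass from times $u$ to left limits $u-$. The only cosmetic differences are that you run a single joint induction on $n$ (covering (i) and all $m\le n$ at once) where the paper separates the cases $m=n$ and $m\neq m$ into two inductions, and that you invoke Doob's maximal inequality where the paper uses the submartingale bound $\Ebb[(J_{n}(F)_{u-})^2]\le\Ebb[(J_{n}(F)_{T})^2]$; both are equivalent in effect.
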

\begin{proof} We start proving (i). If $n=0$, $J_0(F)$ is obviously a (constant) bounded martingale and hence it belongs to $\Hscr^2$. If $n=1$, the statement follows from the properties of the stochastic integral because $(J_0(F_0)_{t-})_{t\in [0,T]}\in\Lrm\p2(\X{\al_1})$ and $J\p{(\al_1)}_1(F_0\otimes F_1)_0=J_0(F_0)_{0-}F_1(0)\,\X{\al_1}_0 =0$ in view of the setting $X_{0-}=\nolinebreak0$ for any c\`adl\`ag process $X$. 
Now we assume that the claim holds for $n$ and we verify it for $n+1$. From \eqref{eq:def.sim.mul.int} and the definition of the stochastic integral, it is shown as for $n=1$ that $J\p{(\al_1,\ldots,\al_{n+1})}_{n+1}(F)_0=0$. To see that $J\p{(\al_1,\ldots,\al_{n+1})}_{n+1}(F)$ is a square integrable martingale, we only need to verify that the integrand on the right-hand side of \eqref{eq:def.sim.mul.int} for $m=n+1$ is in $\Lrm\p2(\X{\al_{n+1}})$:
\[
\begin{split}
\Ebb\Big[&\int_0\p T \big(J\p{(\al_1,\ldots,\al_n)}_{n}(F_0\otimes\cdots\otimes F_n)_{u-}\,F_{n+1}(u)\big)\p2
\rmd\aPP{\X{\al_{n+1}}}{\X{\al_{n+1}}}_u\Big]\\
&\quad\leq\Ebb\big[(
J\p{(\al_1,\ldots,\al_n)}_{n}(F_0\otimes\cdots\otimes F_n)_{T})\p2\big]\int_0\p T (F_{n+1}(u))\p2\rmd\aPP{\X{\al_{n+1}}}{\X{\al_{n+1}}}_u<+\infty,
\end{split}
\]
where we used that the predictable covariation $\aPP{\X{\al_{n+1}}}{\X{\al_{n+1}}}$ and $F_{n+1}$ are deterministic and the induction hypothesis. Now we show (ii).
As a first step we assume that $m=n$ and we deduce the result by induction. If $n=0$ there is nothing to prove. Now we assume \eqref{eq:IRmix.el} for $n$ and verify it for $n+1$. Because from (i) follows that the elementary iterated integrals are in $\Hscr\p2_0$, this is an immediate consequence of the relation
\[\begin{split}
&\Ebb\big[J_{n+1}^{(\al_1,\ldots, \al_{n+1})}(F)_{\,t}\,J_{n+1}^{(\bt_1, \ldots, \bt_{n+1})}(G)_{\,t}|\Fscr_0\big]=\Ebb\big[\aPP{J_{n+1}^{(\al_1,\ldots, \al_{n+1})}(F)}{J_{n+1}^{(\bt_1, \ldots, \bt_{n+1})}(G)}_{\,t}|\Fscr_0\big]\\
&=\int_0\p t\Ebb\big[J_{n}^{(\al_1,\ldots, \al_{n})}(\dis{\otimes_{k=0}^n} F_k)_{\,u-}\,J_{n}^{(\bt_1, \ldots, \bt_{n})}(\dis{\otimes_{k=0}^n} G_k)_{\,u-}|\Fscr_0\big] F_{n+1}(u)G_{n+1}(u)\,\rmd
\aPP{\X{\al_{n+1}}}{\X{\bt_{n+1}}}_u
\end{split}
\]
(following from the properties of the predictable covariation of stochastic integrals and the definition of the iterated integrals) and the induction hypothesis. This completes the proof of (ii) for the case $n=m$. Now we consider the case $n\neq m$, say $n=m+p$, $p>0$. We proceed by induction on $m$. If $m=0$, then (ii) follows from (i). It remains to prove the statement for $m+1$ under the assumption that it is fulfilled for $m$. To this end, we notice that if $X,Y\in\Hscr\p{\,2}$ are such that $\Ebb[X_{\,t}Y_{\,t}|\Fscr_0]=0$ for every $t\in[0,T]$, then $\Ebb[X_{t-}Y_{t-}|\Fscr_0]=0$ for every $t\in[0,T]$. Indeed, as a consequence of Doob's inequality, $\sup_{t\in[0,T]}|X_tY_t|\le 1/2\,\sup_{t\in[0,T]}X_t^2+1/2\,\sup_{t\in[0,T]}Y_t^2$ is integrable and the conclusion follows from Lebesgue's theorem on dominated convergence. To complete the proof of the induction step, now we have only to recall (i) and that $\Ebb[X_{\,t}Y_{\,t}|\Fscr_0]=\Ebb[\aPP{X}{Y}_t|\Fscr_0]$ for every $X,Y\i
 n\Hscr\p{\,2}_0$, $t\in[0,T]
 $ and to apply the induction hypothesis.
\end{proof}

For $t\in[0,T]$ and $n\geq 1$, we introduce the sets
\begin{equation}\label{eq:def.Mn}
M_{\,t}^{(n)}:=\{(t_1,\ldots,t_n): \ 0\leq t_1\leq\ldots\leq t_n\leq t\},\qquad \ol{M}_{\,t}^{(n)}:=\Om\times M_{\,t}^{(n)}\,.
\end{equation}
For every $\al\in\Lm$, we also introduce the finite measure $m^{(\al)}$ on $([0,T],\Bscr([0,T]))$ generated by the right-continuous increasing function $\aP{\X{\al}}$. For any $\al_1,\ldots, \al_n\in\Lm$, we define  $m^{(\al_1,\ldots, \al_n)}_{\mb P}$ as the product measure $\mb P\otimes\bigotimes_{k=1}^n m^{(\al_k)}$ on $(\Om\times[0,T]^n,\mc F_0\times\Bscr([0,T]^n))$.

For $n\geq1$, we denote by $\Escr_{n,t}^{(\al_1, \ldots, \al_n)}$ the linear subspace of $L^2(\ol{M}_{\,t}^{(n)}, m^{(\al_1,\ldots, \al_n)}_{\mb P})$ generated by the elementary functions $F$ of order $n$ restricted to $\ol{M}_{\,t}^{(n)}$.  Applying the expectation to \eqref{eq:IRmix.el}, from the resulting isometry relation it easily follows that $J_n\p{(\al_1,\ldots,\al_n)}(\cdot)_t$ can be uniquely extended \textit{linearly} to $\Escr_{n,t}^{(\al_1, \ldots, \al_n)}$. Clearly, relation \eqref{eq:IRmix.el} extends to all $F\in\Escr_{n,t}^{(\al_1, \ldots, \al_n)}$ and $G\in\Escr_{n,t}^{(\bt_1, \ldots, \bt_n)}$. In particular, the mapping $J_n\p{(\al_1,\ldots,\al_n)}(\cdot)_t$ linearly extended to $\Escr_{n,t}^{(\al_1, \ldots, \al_n)}$ is a linear and isometric mapping from $L^2(\ol{M}_{\,t}^{(n)}\!\!, m^{(\al_1,\ldots, \al_n)}_{\mb P})$ into $L\p2(\Pbb)$.
Setting $\al_k=\bt_k$, $k=1,\ldots,n$, and $F=G\in\Escr_{n,t}^{(\al_1, \ldots, \al_n)}$ in the extended isometry relation \eqref{eq:IRmix.el} and then taking the expectation, yields
\begin{equation}\label{eq:isom}
\|J_n\p{(\al_1,\ldots,\al_n)}(F)_t\|\p2_{L\p2(\Pbb)}=
\|F\|^2_{L^2(\ol{M}_{\,t}^{(n)},\, m^{(\al_1,\ldots, \al_n)}_{\mb P})}\,.
\end{equation}
On the other side, the linear space $\Escr_{n,t}^{(\al_1, \ldots, \al_n)}$ is dense in $L^2(\ol{M}_{\,t}^{(n)}, m^{(\al_1,\ldots, \al_n)}_{\mb P})$ and therefore, the linear mapping
$F\mapsto J_{n}^{(\al_1,\ldots, \al_{n})}(F)_{\,t}$,\, $F\in \Escr_{n,t}^{(\al_1, \ldots, \al_n)}$,  can uniquely be extended to an isometry on the space $L^2(\ol{M}_{\,t}^{(n)}, m^{(\al_1,\ldots, \al_n)}_{\mb P})$ with values in $L^2(\Pbb)$, for every $n\geq1$ and $\al_1,\ldots,\al_n\in\Lm$. We denote this extension by $J_{n}^{(\al_1,\ldots, \al_{n})}(\cdot)_{\,t}$, $t\in[0,T]$. If $n=0$, then the iterated integrals $J_0(F_0)_t$ of order zero evaluated at time $t\in[0,T]$ are defined just as the identity $J_0(F_0)_t=F_0$ for $F_0\in L^2(\Om,\mc F_0,\mb P)$, the closure of the space of elementary functions of order zero.
\begin{definition}\label{def:mul.st.int}
Let $n\geq1$, $\al_1,\ldots,\al_n\in\Lm$ and $F\in L^2(\ol{M}_{\,t}^{(n)}, m^{(\al_1,\ldots, \al_n)}_{\mb P})$. We call the stochastic process $J_{n}^{(\al_1,\ldots, \al_{n})}(F) :=(J_{n}^{(\al_1,\ldots, \al_{n})}(F)_{\,t})_{t\in[0,T]}$  the $n$-fold iterated stochastic integral of $F$ with respect to the martingales $(\X{\al_1}$, $\X{\al_2}, \ldots, \X{\al_n})$. If $n=0$, we say that the constant square integrable martingale defined by $J_0(F_0)=(J_0(F_0)_t)_{t\in[0,T]}$ is the $0$-fold iterated integral of $F_0\in L^2(\Om,\mc F_0,\mb P)$.   
\end{definition}
Using the linearity and isometry of the iterated integral, we obtain the following straightforward extension of Lemma \ref{lem:isom.sim.mul.int} to arbitrary $F\in L^2(\ol{M}_{\,t}^{(n)}, m^{(\al_1,\ldots, \al_n)}_{\mb P})$ and $G\in L^2(\ol{M}_{\,t}^{(m)}, m^{(\bt_1,\ldots, \bt_m)}_{\mb P})$.
\begin{proposition}\label{prop:isom.sim.mul.int} Let $n\geq1$, $\al_1,\ldots,\al_n\in\Lm$ and $F\in L^2(\ol{M}_{\,t}^{(n)}, m^{(\al_1,\ldots, \al_n)}_{\mb P})$.

\textnormal{(i)} The iterated integral $J_{n}\p{(\al_1,\ldots,\al_n)}(F)$ belongs to $\Hscr\p2_0$ .

\textnormal{(ii)} Let moreover $m\geq1$, $\bt_1, \ldots, \bt_m\in \Lm$ and $G\in L^2(\ol{M}_{\,t}^{(m)}, m^{(\bt_1,\ldots, \bt_m)}_{\mb P})$. Then, for every $t\in[0,T]$, we have: If $n\neq m$, then $\Ebb\big[J_{n}^{(\al_1,\ldots, \al_{n})}(F)_{\,t}J_{m}^{(\bt_1, \ldots, \bt_m)}(G)_{\,t}\big|\Fscr_0\big]=0$, while, if $m=n$,
\beqa\label{IRmix}
\nonumber\lefteqn{\Ebb\big[J_{n}^{(\al_1,\ldots, \al_{n})}(F)_{\,t}\,J_{n}^{(\bt_1, \ldots, \bt_n)}(G)_{\,t}\big|\Fscr_0\big]}\\
&=&\displaystyle\Ebb\left[\int_0\p t\int\p {t_{n}-}_0\!\!\!\!\!\!\!\!\!\cdots\int_0\p{t_2-}\!\!\!\!
F(t_1,\ldots,t_n)\,G(t_1,\ldots,t_n)\,\rmd\aPP{\X{\al_1}}{\X{\bt_1}}_{t_1}\ldots
\,\rmd\aPP{\X{\al_n}}{\X{\bt_n}}_{t_n}\bigg|\Fscr_0\right]\,.
\eeqa
\end{proposition}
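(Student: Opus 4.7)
The plan is to extend both statements from the elementary case (Lemma \ref{lem:isom.sim.mul.int}) to general $L\p2$ integrands by a density and continuity argument, exploiting the isometry \eqref{eq:isom} and the denseness of $\Escr_{n,t}^{(\al_1,\ldots,\al_n)}$ in $L\p2(\ol{M}_t\p{(n)},m\p{(\al_1,\ldots,\al_n)}_\Pbb)$, both recalled in the paragraph preceding the proposition.

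For part (i), I would pick a sequence $(F\p k)_{k\geq 1}$ of elementary functions of order $n$ converging to $F$ in $L\p2(\ol{M}_T\p{(n)},m\p{(\al_1,\ldots,\al_n)}_\Pbb)$. By Lemma \ref{lem:isom.sim.mul.int}(i) each $J_n\p{(\al_1,\ldots,\al_n)}(F\p k)$ lies in $\Hscr\p2_0$, and the terminal isometry \eqref{eq:isom} at $t=T$ identifies the $\Hscr\p2$-Cauchy property of this sequence with the $L\p2$-Cauchy property of $(F\p k)$. Completeness of the closed subspace $\Hscr\p2_0\subseteq\Hscr\p2$ then yields a limit $M\in\Hscr\p2_0$. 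To identify $M_t$ with $J_n\p{(\al_1,\ldots,\al_n)}(F)_t$ for every $t\in[0,T]$, I note that the inductive definition \eqref{eq:def.sim.mul.int} makes the elementary iterated integral $J_n\p{(\al_1,\ldots,\al_n)}(F\p k)_t$ depend only on the restriction of $F\p k$ to $\ol{M}_t\p{(n)}$. Doob's inequality then forces $L\p2(\Pbb)$-convergence of the left-hand side to $M_t$, while the extended isometry on the restricted domain (restriction is a contraction in $L\p2$) produces convergence of the right-hand side to $J_n\p{(\al_1,\ldots,\al_n)}(F)_t$.

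For part (ii), I would take elementary approximations $F\p k\to F$ in $L\p2(\ol{M}_t\p{(n)},m\p{(\al_1,\ldots,\al_n)}_\Pbb)$ and $G\p l\to G$ in $L\p2(\ol{M}_t\p{(m)},m\p{(\bt_1,\ldots,\bt_m)}_\Pbb)$. Lemma \ref{lem:isom.sim.mul.int}(ii) delivers the identities (either the vanishing conditional expectation when $n\neq m$ or \eqref{eq:IRmix.el} when $n=m$) at the elementary level. The left-hand side passes to the limit because the $L\p2(\Pbb)$-convergence of the iterated integrals from (i), combined with the Cauchy--Schwarz inequality, yields $L\p1(\Pbb)$-convergence of the products $J_n\p{(\al_1,\ldots,\al_n)}(F\p k)_t\,J_m\p{(\bt_1,\ldots,\bt_m)}(G\p l)_t$, and conditional expectation is $L\p1$-continuous. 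For the right-hand side of \eqref{IRmix} in the case $n=m$, I would establish joint continuity of the bilinear form
\[
(F,G)\mapsto\int_0\p{t}\!\!\int_0\p{t_n-}\!\!\!\cdots\!\!\int_0\p{t_2-}\!\!F(t_1,\ldots,t_n)\,G(t_1,\ldots,t_n)\,\rmd\aPP{\X{\al_1}}{\X{\bt_1}}_{t_1}\cdots\rmd\aPP{\X{\al_n}}{\X{\bt_n}}_{t_n}
\]
on $L\p2(\ol{M}_t\p{(n)},m\p{(\al_1,\ldots,\al_n)}_\Pbb)\times L\p2(\ol{M}_t\p{(n)},m\p{(\bt_1,\ldots,\bt_n)}_\Pbb)$ via an iterated Kunita--Watanabe estimate, one inequality per time variable. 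This gives a bound by $\|F\|\,\|G\|$ in the respective $L\p2$ norms; taking conditional expectations extends the identity from elementary functions (for which \eqref{eq:IRmix.el} applies, the $\Fscr_0$-measurable factors $F_0\p k G_0\p l$ factoring out of the conditional expectation) to the full $L\p2$ spaces.

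The main obstacle I anticipate is the continuity of the right-hand side of \eqref{IRmix} when $\al_k\neq\bt_k$ for some $k$: in that regime the deterministic measures $\rmd\aPP{\X{\al_k}}{\X{\bt_k}}$ are only signed, so no direct positivity or isometry argument applies. The iterated Kunita--Watanabe inequality supplies the required bound, with the diagonal case $\al=\bt$, $F=G$ reducing to \eqref{eq:isom} and the off-diagonal case following either by iterating the inequality variable by variable or, equivalently, by polarisation.
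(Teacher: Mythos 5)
Your proposal is correct and follows essentially the same route as the paper, which states the proposition as a ``straightforward extension'' of Lemma \ref{lem:isom.sim.mul.int} obtained from the linearity and isometry of the iterated integral, i.e.\ exactly the density-and-continuity argument you carry out. The only detail you add beyond what the paper makes explicit is the iterated Kunita--Watanabe bound guaranteeing continuity of the right-hand side of \eqref{IRmix} when the signed measures $\rmd\aPP{\X{\al_k}}{\X{\bt_k}}$ appear, which is a legitimate (and necessary) filling-in of a step the paper leaves implicit.
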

In the following definition we introduce some spaces of iterated stochastic integrals. 
\begin{definition}\label{def:sp.mul.int}
(i) Let $\Jscr_{0,0}$ be the space of bounded $\mc F_0$-measurable martingales (the $0$-fold elementary iterated integrals) and $\Jscr_{0}$ the space of square integrable $\mc F_0$-measurable martingales (the $0$-fold iterated integrals). To simplify the notation, we shall identify martingales $J_0(F_0)$ of $\Jscr_{0,0}$ (resp., $\Jscr_{0}$) with the bounded (resp., square integrable) $\mc F_0$-measurable random variable $F_0$.
  
(ii) Let $n\geq1$ and $\al_1, \ldots, \al_n\in\Lm$. By $\Jscr_{n}^{(\al_1,\ldots, \al_{n})}$ (resp., $\Jscr_{n,0}^{(\al_1,\ldots, \al_{n})}$) we denote the space of $n$-fold (resp., the linear hull of elementary) iterated stochastic integrals relative to the square integrable martingales $(\X{\al_1}, \X{\al_2}, \ldots, \X{\al_n})$.

(iii) For all $n\geq1$, we introduce
\begin{equation}\label{def:sp.mul.int.n}
\quad\scr J_{n,0}\!\!:=\!\!\Span\Big(\bigcup_{(\al_1,\ldots,\al_n)\in \Lm^n} \scr J_{n,0}^{(\al_1,\ldots, \al_{n})}\Big),\qquad\scr J_n\!\!:=\!\! \cl\Big(\Span\Big(\bigcup_{(\al_1,\ldots,\al_n)\in \Lm^n}\scr J_{n}^{(\al_1,\ldots, \al_{n})}\Big)\Big)_{\Hscr^2}\,,
\end{equation}
and then we define
\begin{equation}\label{def:sp.mul.in}
\scr J_{\rme}\!\!:=\!\!\Span\Big(\bigcup_{n\geq 0} \scr J_{n,0}\Big),\!\!\qquad\qquad\qquad\qquad
\scr J\!\!:=\!\!\cl\Big(\Span\Big(\bigcup_{n\geq 0} \scr J_{n}\Big)\Big)_{\Hscr^2}\,.
\end{equation}
We call $\Jscr$ the space of iterated integrals \emph{generated by} $\Xscr$.

(iv) By $\Jscr_T$ we denote the linear subspace of $L\p2(\Pbb)$ of terminal variables of iterated integrals from $\Jscr$. The linear spaces $\Jscr_{n,T}\p{(\al_1,\ldots,\al_n)}$ and $\Jscr_{n,T}$ of random variables in $L\p2(\Pbb)$ are introduced analogously from the spaces of processes $\Jscr_{n}\p{(\al_1,\ldots,\al_n)}$ and $\Jscr_n$, respectively, $n\geq0$.
\e{definition} 
Now we state the definition of the \emph{chaotic representation property} on the space $L\p2(\Pbb)$.

\begin{definition}\label{def:crp} We say that $\Xscr=\{\X{\al},\ \al\in\Lm\}$ possesses the \emph{chaotic representation property} (CRP) on the Hilbert space $L\p2(\Pbb)=L\p2(\Om,\Fscr,\Pbb)$ if the linear space $\Jscr_T$ (cf.\ Definition \ref{def:sp.mul.int} (iii)) is equal to $L\p2(\Pbb)$.
\end{definition}
We stress that, because the spaces $(L\p2(\Pbb),\|\cdot\|_2)$ and $(\Hscr\p{\,2},\|\cdot\|_{\Hscr\p{\,2}})$ can be identified, we can equivalently claim that $\Xscr$ possesses the CRP if $\Jscr=\Hscr\p{\,2}$.

Proposition \ref{prop:isom.sim.mul.int} (ii) yields that $\Jscr_{n,T}$ $(n\geq1)$ (resp., $\Jscr_{n}$ $(n\geq1)$) are pairwise orthogonal closed subspaces of $L\p2(\Pbb)$ (resp., $\Hscr\p{\,2}$). Furthermore, it can easily be checked that $\Jscr_{0,T}$ (resp., $\Jscr_{0}$) is orthogonal to $\Jscr_{n,T}$ (resp., $\Jscr_{n}$) for all $n\geq 1$. This immediately leads to the following equivalent description of the CRP.
\begin{proposition}\label{prop:equivCRP}
\emph{(i)} It holds $\Jscr_T=\bigoplus_{n=0}\p\infty\Jscr_{n,T}$ (resp., $\Jscr=\bigoplus_{n=0}\p\infty\Jscr_{n}$).

\emph{(ii)} The family $\Xscr$ possesses the CRP if and only if
$L\p2(\Pbb)=\bigoplus_{n=0}\p\infty\Jscr_{n,T}$ (resp., $\Hscr\p{\,2}=\bigoplus_{n=0}\p\infty\Jscr_{n}$).
\end{proposition}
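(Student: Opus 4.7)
The overall plan is to reduce the statement to two standard facts from Hilbert space theory: first, that the closure of the span of a collection of pairwise orthogonal closed subspaces equals their Hilbert orthogonal direct sum; and second, that CRP is, by definition, the statement that the closed subspace $\Jscr_T$ exhausts $L\p2(\Pbb)$. Both assertions then follow once pairwise orthogonality of the spaces $\Jscr_{n,T}$ (resp.\ $\Jscr_n$) is established. The work to do is therefore essentially to package Proposition \ref{prop:isom.sim.mul.int}(ii) into the correct Hilbert space statement.

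The first step is pairwise orthogonality for $n,m\ge 1$ with $n\neq m$. Proposition \ref{prop:isom.sim.mul.int}(ii) gives
\[
\Ebb\bigl[J_n\p{(\al_1,\ldots,\al_n)}(F)_{\,T}\,J_m\p{(\bt_1,\ldots,\bt_m)}(G)_{\,T}\bigm|\Fscr_0\bigr]=0,
\]
so taking the full expectation yields orthogonality in $L\p2(\Pbb)$ for all elementary (hence by isometric extension, for all) $F$ and $G$. By bilinearity this passes to $\Span\bigl(\bigcup_{(\al_1,\ldots,\al_n)}\Jscr_{n,T}\p{(\al_1,\ldots,\al_n)}\bigr)$ versus the analogous span for $m$, and by continuity of the scalar product on $L\p2(\Pbb)$ it extends to their closures $\Jscr_{n,T}$ and $\Jscr_{m,T}$. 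Under the identification $\Hscr\p{\,2}\simeq L\p2(\Pbb)$ via $X\mapsto X_T$ this is exactly orthogonality of $\Jscr_n$ and $\Jscr_m$ in $\Hscr\p{\,2}$.

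The second step is orthogonality of $\Jscr_{0,T}$ (resp.\ $\Jscr_0$) to $\Jscr_{n,T}$ (resp.\ $\Jscr_n$) for every $n\ge 1$. Elements of $\Jscr_{0,T}$ are $\Fscr_0$-measurable, while by Proposition \ref{prop:isom.sim.mul.int}(i) every $J_n\p{(\al_1,\ldots,\al_n)}(F)\in\Hscr\p2_0$ satisfies $\Ebb[J_n\p{(\al_1,\ldots,\al_n)}(F)_T\mid\Fscr_0]=0$. Thus for $F_0\in\Jscr_{0,T}$ we have
\[
\Ebb\bigl[F_0\,J_n\p{(\al_1,\ldots,\al_n)}(F)_T\bigr]=\Ebb\bigl[F_0\,\Ebb[J_n\p{(\al_1,\ldots,\al_n)}(F)_T\mid\Fscr_0]\bigr]=0,
\]
and again the relation extends to the spans and their closures.

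With pairwise orthogonality of the closed subspaces $(\Jscr_{n,T})_{n\ge 0}$ in hand, assertion (i) is the standard Hilbert space fact: the closure of the algebraic sum of mutually orthogonal closed subspaces coincides with their Hilbert orthogonal direct sum. Since by Definition \ref{def:sp.mul.int} the space $\Jscr_T$ is exactly $\cl\bigl(\Span\bigl(\bigcup_{n\ge 0}\Jscr_{n,T}\bigr)\bigr)$, we obtain $\Jscr_T=\bigoplus_{n=0}\p\infty\Jscr_{n,T}$; the parenthetical statement for $\Jscr$ is identical under the identification $\Hscr\p{\,2}\simeq L\p2(\Pbb)$. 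Assertion (ii) is then immediate: by Definition \ref{def:crp}, $\Xscr$ has the CRP iff $\Jscr_T=L\p2(\Pbb)$, and substituting the decomposition from (i) yields the stated equivalence. There is no substantive obstacle here; the only point requiring care is the passage from orthogonality on the dense generating system of elementary iterated integrals to orthogonality of the closed spaces $\Jscr_{n,T}$, which is purely a density/continuity argument.
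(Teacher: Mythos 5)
Your argument is correct and follows exactly the route the paper takes: the paper states the proposition as an immediate consequence of Proposition \ref{prop:isom.sim.mul.int}(ii) together with the orthogonality of $\Jscr_{0,T}$ to the higher-order spaces, and your write-up simply supplies the density/continuity details and the standard Hilbert-space identification of the closed span of mutually orthogonal closed subspaces with their orthogonal direct sum. Nothing is missing.
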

Now we shortly discuss the relation between the CRP and the \emph{predictable representation property} (PRP).
We recall that a closed linear subspace $\Hscr$ of $\Hscr\p{\,2}$ is a stable subspace of $\Hscr\p2$ if  $1_AX\p\tau$ belongs to $\Hscr$, for every  stopping time $\tau$, $A\in\Fscr_0$ and $X\in\Hscr$.
Let $\Xscr$ be a subfamily of $\Hscr\p{\,2}$. The stable subspace generated by $\Xscr$ is denoted by $\Lscr\p2(\Xscr)$ and is defined as the smallest stable subspace of $\Hscr\p{\,2}$ containing $\Xscr$. Note that $\Lscr\p2(\Xscr)$ is the smallest stable subspace of $\Hscr\p{\,2}$ containing the set of stochastic integrals $\{H\cdot X,\ H\in\Lrm\p2(X),\ X\in\Xscr\}$. Furthermore we have $\Lscr\p2(\{1\})=\{X\in\Hscr\p2,\ X_{\,t}\equiv X_0\}=\Jscr_0$. For more details on the theory of stable subspaces of martingales, cf.\ \citet{J79}, Chapter IV. We say that $\Xscr$ possesses the PRP with respect to $\Fbb$ if $\Lscr\p2(\Xscr\cup\{1\})=\Hscr\p{\,2}(\Fbb)$.
We now assume that $\Xscr$ possesses the CRP. Clearly the inclusion
$\Jscr_\rme\subseteq\Lscr\p2(\Xscr\cup\{1\})$ 
holds. Indeed, an elementary iterated integral of order $n\geq1$ can always be regarded as a stochastic integral with respect to 
an element of $\Xscr$ (cf.\ Definition \ref{elem.it.int.}) and $\Jscr_{0,0}\subseteq\Lscr\p2(\{1\})$. Using that 
$\Lscr\p2(\Xscr\cup\{1\})$ is closed in $\Hscr\p{\,2}$ we obtain $\Hscr^{\,2}(\Fbb)=\Jscr =\cl(\Jscr_\rme)_{\Hscr\p{\,2}}\subseteq\Lscr\p2(\Xscr\cup\{1\})$. 
Hence $\Lscr\p2(\Xscr\cup\{1\})=\Hscr^{\,2}(\Fbb)$. Thus we have shown that, for every family $\Xscr\subseteq\Hscr\p{\,2}$ for which $\aPP{X}{Y}$ is deterministic, $X,Y\in\Xscr$, the CRP implies the PRP.

The following technical lemma will be used to prove Theorem \ref{thm:cl.st.it} below.
\begin{lemma}\label{con.mul.in}
Let $\X{\al_1},\ldots,\ \X{\al_m};\ \X{\bt_1},\ldots,\ \X{\bt_m};\ \X{\al_1\p n},\ldots,\ \X{\al_m\p n}\in\Xscr$ be such that for every $k=1,\ldots,m$, $\X{\al_k\p n}\longrightarrow\X{\al_k}$ in $\Hscr\p{\,2}$ as $n\rightarrow+\infty$. Then for any elementary function $F=1\otimes(\otimes_{k=1}^mF_k)$ of order $m$ we have:
\begin{equation}\label{eq:con.mul.in1}
\begin{split}\lim_{n\rightarrow+\infty}\textstyle\sup_{t\in[0,T]}
&\displaystyle\Big|\int_0\p t\int_0\p{t_m-}\!\!\!\!\!\!\!\cdots\int_0\p{t_2-}\!\!\!\! (\otimes_{k=1}^mF_k)(t_1,\ldots,t_m)
\,\rmd\aPP{\X{\al_1\p n}}{\X{\al_1\p n}}_{t_1}\ldots\ \rmd\aPP{\X{\al_m\p n}}{\X{\al_m\p n}}_{t_m}\\&-\int_0\p t\int_0\p{t_m-}\!\!\!\!\!\!\!\cdots\int_0\p{t_2-} (\otimes_{k=1}^mF_k)(t_1,\ldots,t_m)\,
\rmd\aPP{\X{\al_1}}{\X{\al_1}}_{t_1}\ldots\ \rmd\aPP{\X{\al_m}}{\X{\al_m}}_{t_m}\Big|=0
\end{split}
\end{equation}
and
\begin{equation}\label{eq:con.mul.in2}
\begin{split}\lim_{n\rightarrow+\infty}\textstyle\sup_{t\in[0,T]}
&\displaystyle\Big|\int_0\p t\int_0\p{t_m-}\!\!\!\!\!\!\!\cdots\int_0\p{t_2-}\!\!\!\!
 (\otimes_{k=1}^mF_k)(t_1,\ldots,t_m)
\,\rmd\aPP{\X{\al_1\p n}}{\X{\bt_1}}_{t_1}\ldots\ \rmd\aPP{\X{\al_m\p n }}{\X{\bt_m}}_{t_m}\\&-\int_0\p t\int_0\p{t_m-}\!\!\!\!\!\!\!\cdots\int_0\p{t_2-}\!\!\!\! (\otimes_{k=1}^mF_k)(t_1,\ldots,t_m)\,
\rmd\aPP{\X{\al_1}}{\X{\bt_1}}_{t_1}\ldots\ \rmd\aPP{\X{\al_m}}{\X{\bt_m}}_{t_m}\Big|=0\,.
\end{split}
\end{equation}
\end{lemma}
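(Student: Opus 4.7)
The idea is to recognise that, since $\aPP{\X{\al}}{\X{\bt}}$ is deterministic for all martingales in $\Xscr$, both iterated integrals in \eqref{eq:con.mul.in1} and \eqref{eq:con.mul.in2} are deterministic iterated Lebesgue--Stieltjes integrals against signed measures of integrable variation on $[0,T]$. Writing $A_k\p n$ and $A_k$ for either $\aP{\X{\al_k\p n}}$ and $\aP{\X{\al_k}}$ (diagonal case) or $\aPP{\X{\al_k\p n}}{\X{\bt_k}}$ and $\aPP{\X{\al_k}}{\X{\bt_k}}$ (mixed case), the whole lemma reduces to
\[
\Var(A_k\p n-A_k)_T\longrightarrow 0\quad\text{and}\quad\sup_{n}\Var(A_k\p n)_T<+\infty,\qquad k=1,\ldots,m.
\]

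The total variation convergence relies on the Kunita--Watanabe inequality, which yields $\Var(\aPP{U}{V})_T\le\aP{U}_T\p{1/2}\aP{V}_T\p{1/2}$ for all $U,V\in\Hscr\p{\,2}$. In the mixed case, bilinearity of the predictable covariation gives $A_k\p n-A_k=\aPP{\X{\al_k\p n}-\X{\al_k}}{\X{\bt_k}}$, so that $\Var(A_k\p n-A_k)_T\le\aP{\X{\al_k\p n}-\X{\al_k}}_T\p{1/2}\aP{\X{\bt_k}}_T\p{1/2}$. In the diagonal case one uses the decomposition $A_k\p n-A_k=2\aPP{\X{\al_k}}{\X{\al_k\p n}-\X{\al_k}}+\aP{\X{\al_k\p n}-\X{\al_k}}$, treating the first summand as above and bounding the variation of the second by its value at $T$. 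In both cases the argument terminates at the deterministic quantity $\aP{\X{\al_k\p n}-\X{\al_k}}_T$. Since $\Xscr$ has deterministic mutual predictable covariations, $\aP{\X{\al_k\p n}-\X{\al_k}}$ is itself deterministic (as a linear combination of $\aP{\X{\al_k\p n}}$, $\aPP{\X{\al_k\p n}}{\X{\al_k}}$ and $\aP{\X{\al_k}}$), hence $\aP{\X{\al_k\p n}-\X{\al_k}}_T=\Ebb[(\X{\al_k\p n}_T-\X{\al_k}_T)\p2]-\Ebb[(\X{\al_k\p n}_0-\X{\al_k}_0)\p2]$, which tends to $0$ by $\Hscr\p{\,2}$-convergence. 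The uniform bound $\sup_n\Var(A_k\p n)_T<+\infty$ follows from the same ingredients.

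The lemma then follows by writing the difference of the two iterated integrals as the telescoping sum
\[
\sum_{k=1}\p m \int_0\p t\!\!\int_0\p{t_m-}\!\!\!\!\cdots\!\!\int_0\p{t_2-}\prod_{j=1}\p m F_j(t_j)\, \rmd A_1\p n(t_1)\cdots \rmd A_{k-1}\p n(t_{k-1})\,\rmd(A_k\p n-A_k)(t_k)\,\rmd A_{k+1}(t_{k+1})\cdots \rmd A_m(t_m),
\]
which uses only the multilinearity of the iterated Stieltjes integral in its integrators. By the standard estimate for iterated Lebesgue--Stieltjes integrals, each summand is bounded uniformly in $t\in[0,T]$ by $\prod_{j=1}\p m\|F_j\|_\infty$ times $\Var(A_k\p n-A_k)_T$ times the product of the $\Var(\cdot)_T$ of the remaining integrators. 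As $n\to+\infty$ the latter remain bounded and the former tends to zero, so each summand vanishes uniformly in $t$, and \eqref{eq:con.mul.in1}, \eqref{eq:con.mul.in2} follow.

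The technical heart of the argument is the total variation convergence $\Var(A_k\p n-A_k)_T\to 0$; the multilinear telescoping step is then routine. The assumption of deterministic covariations plays a double role: it ensures that the iterated integrals are genuinely deterministic functions of $t$ (so that the $\sup_{t\in[0,T]}$ is an honest pointwise supremum), and it upgrades the a priori $L\p1$-convergence $\Ebb[\aP{\X{\al_k\p n}-\X{\al_k}}_T]\to 0$ implied by $\Hscr\p{\,2}$-convergence to a genuine numerical convergence of $\aP{\X{\al_k\p n}-\X{\al_k}}_T$.
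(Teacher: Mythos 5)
Your proof is correct, and it reorganises the argument in a way that differs from the paper's. The paper proves only \eqref{eq:con.mul.in1}, by induction on $m$: at the outermost integral it applies the identity $H\cdot\aPP{X}{X}-K\cdot\aPP{Y}{Y}=(H-K)\cdot\aPP{X}{Y}+H\cdot\aPP{X}{X-Y}-K\cdot\aPP{Y}{Y-X}$ with $H,K$ the inner iterated integrals, estimates the last two terms by Kunita--Watanabe together with $\Ebb[\aPP{X}{X}_T]\le\|X\|^2_{\Hscr^{\,2}}$, and feeds the first term to the induction hypothesis; \eqref{eq:con.mul.in2} is then deduced from \eqref{eq:con.mul.in1} by polarization. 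You instead isolate once and for all the statement $\Var(A_k^{\,n}-A_k)_T\to0$ (together with $\sup_n\Var(A_k^{\,n})_T<+\infty$) for each individual integrator, and then conclude by a telescoping decomposition over the $m$ coordinates plus the elementary sup-norm/total-variation bound for iterated Stieltjes integrals over the simplex. The analytic ingredients are the same in both proofs (Kunita--Watanabe, bilinearity of $\aPP{\cdot}{\cdot}$, and the fact that deterministic brackets turn the $L^1$-bound $\Ebb[\aPP{X^n-X}{X^n-X}_T]\le\|X^n-X\|^2_{\Hscr^{\,2}}$ into numerical convergence), but your arrangement buys two things: the mixed case \eqref{eq:con.mul.in2} is handled directly and is in fact the easier of the two, so no polarization is needed; and the induction is replaced by a transparent multilinearity argument in which the only non-routine step is the one-dimensional total-variation convergence. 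The paper's version keeps everything at the level of the iterated integrals themselves, which is marginally more self-contained but obscures where the convergence really comes from. Both proofs are complete and correct.
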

\begin{proof}
We verify only \eqref{eq:con.mul.in1} because \eqref{eq:con.mul.in2} easily follows from \eqref{eq:con.mul.in1} using the polarization formula 
\[\aPP{X}{Y}=\frac14(\aPP{X+Y}{X+Y}-\aPP{X-Y}{X-Y})\,,\quad X,Y\in\Hscr\p2\,,
\] 
and the linearity of the Riemann--Stieltjes integral with respect to the integrator. Now we start with the proof of \eqref{eq:con.mul.in1}. Let $F=1\otimes F_1\otimes\cdots \otimes F_m$ be such that $|F_k|\leq c$ for $k=1,\ldots,m$ with $c>0$, and let $t\in[0,T]$. We introduce the abbreviation
\begin{equation}\label{def:Hproc}
H\p{(\al_1,\ldots,\al_m)}_{t,m}:=\int_0\p t\int_0\p{t_m-}\!\!\!\!\!\!\!\cdots\int_0\p{t_2-}\!\!\!\! (\otimes_{k=1}^mF_k)(t_1, \ldots, t_m)\,\rmd\aPP{\X{\al_1}}{\X{\al_1}}_{t_1}\ldots\ \rmd
\aPP{\X{\al_m}}{\X{\al_m}}_{t_m}
\end{equation}
with the convention $H_{t,0}=1$. Note that $H\p{(\al_1,\ldots,\al_m)}_{t,m}=\int_0\p tF_m(u)H\p{(\al_1,\ldots,\al_{m-1})}_{u,m-1}
\,\rmd\aPP{\X{\al_m}}{\X{\al_m}}_u$.
Rewriting the left-hand side of \eqref{eq:con.mul.in1} using \eqref{def:Hproc} and observing that for all bounded measurable processes $H,K$ and for $X,Y$ in $\Hscr\p2$ the equality
\[
H\cdot\aPP{X}{X}-K\cdot\aPP{Y}{Y}=(H-K)\cdot\aPP{X}{Y}+H\cdot
\aPP{X}{X-Y}-K\cdot\aPP{Y}{Y-X}
\]
holds and that $\vert H\cdot A\vert\leq |H|\cdot\Var(A)$ for every $A\in\Vscr$ and measurable process $H$, we get
\[\begin{split}&\textstyle\sup_{t\in[0,T]}\displaystyle\Big|\int_0\p tF_m(t_m)H\p{(\al_1\p n,\ldots,\al_{m-1}\p n)}_{t_m-,m-1}\,\rmd\aPP{\X{\al_m\p n}}{\X{\al\p n_m}}_{t_m}-\int_0\p tF_m(t_m)H\p{(\al_1,\ldots,\al_{m-1})}_{t_m-,m-1}\,\rmd\aPP{\X{\al_m}}
{\X{\al_m}}_{t_m}\Big|\\
&\qquad\qquad\leq c\textstyle\sup_{t\in[0,T]}\displaystyle\big|H\p{(\al_1\p n,\ldots,\al_{m-1}\p n)}_{t,m-1}-H\p{(\al_1,\ldots,\al_{m-1})}_{t,m-1}\big|\|\X{\al\p n_m}\|_{\Hscr\p2}\|\X{\al_m}\|_{\Hscr\p2}
\\&\qquad\qquad\quad+c\textstyle\sup_{t\in[0,T]}\displaystyle
\big|H\p{(\al_1\p n,\ldots,\al_{m-1}\p n)}_{t,m-1}\big|\|\X{\al_m\p n}\|_{\Hscr\p2}\|\X{\al_m\p n}-\X{\al_m}\|_{\Hscr\p2}\\
&\qquad\qquad\quad+c\textstyle\sup_{t\in[0,T]}\displaystyle
\big|H\p{(\al_1,\ldots,\al_{m-1})}_{t,m-1}\big|
\|\X{\al_m}\|_{\Hscr\p2}\|\X{\al_m}-\X{\al_m\p n}\|_{\Hscr\p2}\,,
\end{split}\]
where in the last passage we used that $F_m$ is bounded, Kunita--Watanabe's inequality in the form of Meyer \cite{M76}, Corollary II.22, the relation $\Ebb[X_T\p2]\geq\Ebb[\aPP{X}{X}_T]$ for $X\in\Hscr\p2$, and the assumption that all the predictable covariations are deterministic. Because for $m=1$ the previous inequality becomes
\[\textstyle\sup_{t\in[0,T]}\displaystyle\Big| H_{t,1}\p{(\al_1\p n)}- H_{t,1}\p{(\al_1)}\Big|\leq c\|\X{\al_1\p n}-\X{\al_1}\|_{\Hscr\p{\,2}}\|\X{\al_1\p n}\|_{\Hscr\p{\,2}}+c\|\X{\al_1}\|_{\Hscr\p{\,2}}\|\X{\al_1\p n}-\X{\al_1}\|_{\Hscr\p{\,2}}\,,
\]
and the right-hand side converges to zero as $n\rightarrow+\infty$, \eqref{eq:con.mul.in1} follows by induction.
\end{proof}
Let $\Zscr$ be a subfamily of $\Xscr$. We denote by $\Jscr\p\Xscr$ and $\Jscr\p\Zscr$ the spaces of iterated integrals generated by $\Xscr$ and $\Zscr$, respectively.
\begin{theorem}\label{thm:cl.st.it}
If $\Xscr\subseteq\cl(\Span(\Zscr))_{\Hscr\p{\,2}}$, then $\Jscr\p\Zscr=\Jscr\p\Xscr$.
\end{theorem}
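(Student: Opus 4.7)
My plan is to establish $\Jscr\p\Zscr=\Jscr\p\Xscr$ by proving the two inclusions separately. The inclusion $\Jscr\p\Zscr\subseteq\Jscr\p\Xscr$ is immediate from $\Zscr\subseteq\Xscr$ and Definition \ref{def:sp.mul.int}. For the substantive direction $\Jscr\p\Xscr\subseteq\Jscr\p\Zscr$, I would use that $\Jscr\p\Zscr$ is closed in $\Hscr\p{\,2}$ (by the outer closure in \eqref{def:sp.mul.in}) and that $\Jscr_0$ does not depend on the family, to reduce the claim to showing $J_{n}\p{(\al_1,\ldots,\al_n)}(F)\in\Jscr\p\Zscr$ for every $n\geq1$, every tuple $(\al_1,\ldots,\al_n)$ indexing $\Xscr$, and every elementary function $F=F_0\otimes F_1\otimes\cdots\otimes F_n$ of order $n$. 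The additional reduction to elementary functions is legitimate because, by the isometry \eqref{eq:isom}, the elementary iterated integrals are dense in each $\Jscr_{n}\p{(\al_1,\ldots,\al_n)}$.

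Fix such a tuple and such an $F$. By hypothesis, for each $k=1,\ldots,n$ I can choose a sequence $\X{\al_k\p p}\in\Span(\Zscr)$ with $\X{\al_k\p p}\to\X{\al_k}$ in $\Hscr\p{\,2}$ as $p\to+\infty$. Writing each $\X{\al_k\p p}$ as a finite linear combination of elements of $\Zscr$ and exploiting the linearity of the stochastic integral with respect to its integrator at every step of the inductive construction \eqref{eq:def.sim.mul.int}, the process $J_n\p{(\al_1\p p,\ldots,\al_n\p p)}(F)$ emerges as a finite $\Fscr_0$-linear combination of elementary iterated integrals taken with respect to tuples of martingales from $\Zscr$, so it lies in $\Jscr_{n,0}\p\Zscr\subseteq\Jscr\p\Zscr$.

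The remaining step is the convergence $J_n\p{(\al_1\p p,\ldots,\al_n\p p)}(F)\to J_n\p{(\al_1,\ldots,\al_n)}(F)$ in $\Hscr\p{\,2}$. Expanding the $L\p2(\Pbb)$-norm of the difference of the terminal variables and applying the isometry \eqref{IRmix} to each of the three resulting expectations, the squared norm becomes $\Ebb[F_0\p2]$ times the combination $A_p-2B_p+C$, where $A_p$, $B_p$, $C$ are the iterated Riemann--Stieltjes integrals
\[
\int_0\p T\!\!\int_0\p{t_n-}\!\!\!\!\!\!\!\cdots\int_0\p{t_2-}\!\!\!\! F_1\p2(t_1)\cdots F_n\p2(t_n)\,\rmd\aPP{\X{\gamma_1}}{\X{\delta_1}}_{t_1}\cdots\rmd\aPP{\X{\gamma_n}}{\X{\delta_n}}_{t_n}
\]
obtained respectively by taking $\gamma_k=\delta_k=\al_k\p p$; $\gamma_k=\al_k\p p$ and $\delta_k=\al_k$; and $\gamma_k=\delta_k=\al_k$, uniformly for $k=1,\ldots,n$. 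Lemma \ref{con.mul.in}, applied to the bounded elementary function $1\otimes F_1\p2\otimes\cdots\otimes F_n\p2$ of order $n$, yields $A_p\to C$ and $B_p\to C$ as $p\to+\infty$. Hence the combination tends to $0$ and, by closedness of $\Jscr\p\Zscr$, one obtains $J_n\p{(\al_1,\ldots,\al_n)}(F)\in\Jscr\p\Zscr$.

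The main obstacle is the stability of iterated stochastic integrals under $\Hscr\p{\,2}$-perturbation of the integrators, which is exactly the content of Lemma \ref{con.mul.in}; once this is invoked, the remainder of the proof is an algebraic reduction via the isometry \eqref{IRmix}, together with density of elementary functions and multilinearity in the integrator slots.
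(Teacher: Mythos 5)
Your proposal is correct and follows essentially the same route as the paper: both reduce to elementary iterated integrals over $\Xscr$-tuples, approximate the integrators by elements of $\Span(\Zscr)$ (the paper phrases your multilinearity step as ``WLOG $\Zscr$ is a linear space''), expand the squared $\Hscr\p{\,2}$-norm of the difference into the three terms $A_p-2B_p+C$ via the isometry, and invoke Lemma \ref{con.mul.in} together with closedness of $\Jscr\p\Zscr$. No gaps.
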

\begin{proof}
Because of $\Jscr\p\Zscr=\Jscr\p{\Span(\Zscr)}$, without loss of generality we can assume that $\Zscr$ is a linear space. Then,
for all $\X{\al_1},\ldots,\X{\al_m}\in\Xscr$, there exist $\X{\al_1\p n},\ldots,\X{\al_m\p n}\in\Zscr$ such that $\X{\al_k\p n}\longrightarrow\X{\al_k}$ in $\Hscr\p{\,2}$ as $n\rightarrow+\infty$, $k=1,\ldots,m$. Let now $F=F_0\otimes\cdots\otimes F_m$ be an elementary function. We show that $J_{m}\p{(\al_1,\ldots,\al_m)}(F)$ belongs to $\Jscr\p\Zscr$. Let $J_{m}\p{(\al\p n_1,\ldots,\al\p n_m)}(F)$ be the elementary iterated integral of $F$ with respect to $(\X{\al_1\p n},\ldots,\X{\al_m\p n})$. Because of \eqref{eq:IRmix.el}, for all $t\in[0,T]$ we have

\[
\begin{split}
&\Ebb\big[(J_{m}\p{(\al_1,\ldots,\al_m)}(F)_{\,t}-J_{m}\p{(\al\p n_1,\ldots,\al\p n_m)}(F)_{\,t})\p{\,2}\big]\\
&\quad=\Ebb\big[(J_{m}\p{(\al_1,\ldots,\al_m)}
(F)_{\,t})\p{\,2}\big]+\Ebb\big[(J_{m}\p{(\al\p n_1,\ldots,\al\p n_m)}(F)_{\,t})\p{\,2}\big]-2\Ebb\big[J_{m}\p{(\al_1,\ldots,\al_m)}
(F)_{\,t}J_{m}\p{(\al\p n_1,\ldots,\al\p n_m)}(F)_{\,t}\big]
\\&\quad=\Ebb[F_0^2]\Bigg(\int_0\p t\int_0\p{t_m-}\!\!\!\!\!\!\!\cdots\int_0\p{t_2-}
(\otimes_{k=1}^mF_k)\p2(t_1,\ldots,t_m)
\,\rmd\aPP{\X{\al_1}}{\X{\al_1}}_u\ldots\,\rmd
\aPP{\X{\al_m}}{\X{\al_m}}_u
\\&\hspace{2cm}+\int_0\p t\int_0\p{t_m-}\!\!\!\!\!\!\!\cdots\int_0\p{t_2-}\!\!\!\!
(\otimes_{k=1}^mF_k)\p2(t_1,\ldots,t_m)\,\rmd\aPP{\X{\al_1\p n}}{\X{\al_1\p n}}_u\ldots\,\rmd\aPP{\X{\al_m\p n}}{\X{\al_m\p n}}_u
\\&\hspace{2cm}-2\int_0\p t\int_0\p{t_m-}\!\!\!\!\!\!\!\cdots\int_0\p{t_2-}\!\!\!\!
(\otimes_{k=1}^mF_k)\p2(t_1,\ldots,t_m)\,\rmd\aPP{\X{\al_1\p n}}{\X{\al_1}}_u\ldots\,\rmd\aPP{\X{\al_m\p n}}{\X{\al_m}}_u\Bigg)\,,
\end{split}
\]
which converges to zero because of Lemma \ref{con.mul.in}. Since the space $\Jscr\p\Zscr$ is closed in $\Hscr^2$, we conclude that $J_{m}\p{(\al_1,\ldots,\al_m)}(F)\in\Jscr\p\Zscr$.
The result can be easily extended to arbitrary functions $F$ in the Hilbert space $L\p2(\ol{M}\p{(m)}_{\,T},m^{(\al_1,\ldots,\al_m)}_{\mb P})$ by linearity and isometry of iterated integrals. Finally, using the definition of $\Jscr\p\Xscr$, we get $\Jscr\p\Xscr\subseteq\Jscr\p\Zscr$. The converse inclusion is clear because $\Zscr\subseteq\Xscr$.
\end{proof}

Now we consider the case in which the martingales coming into play are orthogonal. We introduce the following notation: For every $\al_1,\ldots,\al_n;\ \bt_1,\ldots,\bt_m\in\Lm$ we write $(\al_1,\ldots,\al_n)\neq(\bt_1,\ldots,\bt_m)$ if $n\neq m$ or if $n=m$ there exists $1\leq\ell\leq n=m$ such that $\al_\ell\neq\bt_\ell$. The following proposition can be immediately deduced from Proposition \ref{prop:isom.sim.mul.int} (ii).
\begin{proposition}\label{prop:orth.sim.mul.int}
Let $n\geq1$ and $(\al_1,\ldots,\al_n),\ (\bt_1,\ldots,\bt_n)\in\Lm^n$. Suppose that there exists some index $i$ from $\{1,\ldots,n\}$ such that the martingales $\X{\al_i}, \X{\bt_{i}}$ are orthogonal, i.e., $\X{\al_i}_0\X{\bt_{i}}_0=0$ and $\aPP{\X{\al_i}}{\X{\bt_{i}}}=0$. Then the random variables $J_{n}\p{(\al_1,\ldots,\al_n)}(F)_{\,t}$ and $J_{n}\p{(\bt_1,\ldots,\bt_n)}(G)_{\,t}$ are orthogonal in $L\p2(\Pbb)$, for every $F\in L\p2(\ol{M}\p{(n)}_{\,T},m^{(\al_1,\ldots,\al_n)}_{\mb P})$ and $G\in L\p2(\ol{M}\p{(n)}_{\,T},m^{(\bt_1,\ldots,\bt_n)}_{\mb P})$, $t\in[0,T]$.
\end{proposition}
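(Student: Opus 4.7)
The plan is to read the conclusion directly off Proposition~\ref{prop:isom.sim.mul.int} (ii), which expresses the conditional inner product of two $n$-fold iterated integrals as a (conditional) expectation of a nested Stieltjes integral driven by the covariations $\aPP{\X{\al_k}}{\X{\bt_k}}$, $k=1,\ldots,n$, and to exploit the fact that, under the hypothesis, one of these integrators is the zero process.

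First I would invoke Proposition~\ref{prop:isom.sim.mul.int} (ii) in the case $m=n$, obtaining
\[
\Ebb\big[J_{n}^{(\al_1,\ldots,\al_n)}(F)_{\,t}\,J_{n}^{(\bt_1,\ldots,\bt_n)}(G)_{\,t}\big|\Fscr_0\big]=\Ebb\Big[\int_0\p t\!\int_0\p{t_n-}\!\!\!\!\!\cdots\!\!\int_0\p{t_2-}\!\!\!\! F\,G\,\rmd\aPP{\X{\al_1}}{\X{\bt_1}}_{t_1}\cdots\rmd\aPP{\X{\al_n}}{\X{\bt_n}}_{t_n}\Big|\Fscr_0\Big].
\]
Next I would use the hypothesis $\aPP{\X{\al_i}}{\X{\bt_i}}=0$: since this process is identically zero on $[0,T]$, its associated (signed) Stieltjes measure is the zero measure. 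If $i=n$, the outer integration against $\rmd\aPP{\X{\al_n}}{\X{\bt_n}}$ is trivially zero; if $i<n$, the $i$-th nested integration yields the zero function of $t_{i+1},\ldots,t_n$, which then annihilates every subsequent outer integration against the remaining (deterministic, integrable-variation) covariations. In either case the nested Stieltjes integral is pathwise identically zero.

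Finally, taking unconditional expectation and applying the tower property yields $\Ebb[J_{n}^{(\al_1,\ldots,\al_n)}(F)_{\,t}J_{n}^{(\bt_1,\ldots,\bt_n)}(G)_{\,t}]=0$, which is precisely the orthogonality in $L\p2(\Pbb)$ claimed in the proposition. I do not foresee any genuine obstacle; the substantive work has already been carried out in Lemma~\ref{lem:isom.sim.mul.int} and in its $L\p2$ extension embodied in Proposition~\ref{prop:isom.sim.mul.int}, and the present statement is essentially a one-line corollary of that formula. As an aside, the second half of the orthogonality assumption, $\X{\al_i}_0\X{\bt_{i}}_0=0$, is not actually used, because for $n\geq1$ both iterated integrals start at zero by Proposition~\ref{prop:isom.sim.mul.int} (i); the vanishing of the predictable covariation alone is sufficient for the argument.
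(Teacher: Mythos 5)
Your argument is exactly the paper's: the proposition is stated there as an immediate consequence of Proposition \ref{prop:isom.sim.mul.int} (ii), with the vanishing integrator $\aPP{\X{\al_i}}{\X{\bt_{i}}}=0$ killing the nested Stieltjes integral. Your aside that the initial-value condition $\X{\al_i}_0\X{\bt_{i}}_0=0$ is not needed for $n\geq1$ is also correct, since the iterated integrals of order $n\geq 1$ start at zero.
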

The following theorem will play an important role in the sequel.
\begin{theorem}\label{thm:mul.int.orth.sum}
Let $\Xscr:=\{\X{n},\ n\geq1\}\subseteq\Hscr\p2$ be a family consisting of countably many mutually orthogonal martingales such that $\aP{\X{n}}$ is deterministic for all $n\geq1$. Then the following identities hold:
\beq\label{eq:mul.int.orth.sum}
\Jscr=\Jscr_0\oplus\bigoplus_{n=1}\p\infty
\bigoplus_{(j_1,\ldots,j_n)\in\Nbb\p n} \Jscr_{n}\p{(j_1,\ldots,j_n)}\,,\qquad
\Jscr_T=\Jscr_{0,T}\oplus\bigoplus_{n=1}\p\infty
\bigoplus_{(j_1,\ldots,j_n)\in\Nbb\p n} \Jscr_{n,T}\p{(j_1,\ldots,j_n)}\,.
\eeq
\end{theorem}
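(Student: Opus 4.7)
The plan is to reduce the statement to an application of Proposition \ref{prop:equivCRP} (i) combined with a further orthogonal decomposition of each space $\Jscr_n$ ($n\geq1$) indexed by multi-indices in $\Nbb\p n$. From Proposition \ref{prop:equivCRP} (i) we already know that
\[
\Jscr=\bigoplus_{n=0}\p\infty\Jscr_n,\qquad \Jscr_T=\bigoplus_{n=0}\p\infty\Jscr_{n,T},
\]
so the only thing to establish is that for every fixed $n\geq 1$
\[
\Jscr_n=\bigoplus_{(j_1,\ldots,j_n)\in\Nbb\p n}\Jscr_n\p{(j_1,\ldots,j_n)},\qquad
\Jscr_{n,T}=\bigoplus_{(j_1,\ldots,j_n)\in\Nbb\p n}\Jscr_{n,T}\p{(j_1,\ldots,j_n)}.
\]
Once this is in hand, inserting these decompositions into the previous display produces \eqref{eq:mul.int.orth.sum}.

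First, I would check pairwise orthogonality. Take any two distinct multi-indices $(j_1,\ldots,j_n),(k_1,\ldots,k_n)\in\Nbb\p n$. By definition there exists an index $i\in\{1,\ldots,n\}$ with $j_i\neq k_i$. Since the family $\Xscr$ consists of mutually orthogonal martingales, we have $\aPP{\X{j_i}}{\X{k_i}}=0$ and $\X{j_i}_0\X{k_i}_0=0$, so Proposition \ref{prop:orth.sim.mul.int} directly yields that every element of $\Jscr_n\p{(j_1,\ldots,j_n)}$ is orthogonal to every element of $\Jscr_n\p{(k_1,\ldots,k_n)}$ (both at the level of terminal variables in $L\p2(\Pbb)$ and, since iterated integrals starting at zero satisfy $\Ebb[X_tY_t]=\Ebb[\aPP{X}{Y}_t]$ and their covariation vanishes identically via the same polarization/pathwise identity that appears throughout Section \ref{sec:def.it.int}, also as closed subspaces of $\Hscr\p{\,2}_0$). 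Thus the family $\{\Jscr_n\p{(j_1,\ldots,j_n)}:(j_1,\ldots,j_n)\in\Nbb\p n\}$ consists of pairwise orthogonal closed subspaces of $\Hscr\p2$.

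Next, I would identify $\Jscr_n$ with the orthogonal sum of this family. By Definition \ref{def:sp.mul.int} (iii), $\Jscr_n$ is exactly the closure in $\Hscr\p2$ of the linear hull of $\bigcup_{(\al_1,\ldots,\al_n)\in\Lm\p n}\Jscr_n\p{(\al_1,\ldots,\al_n)}$, where here $\Lm=\Nbb$. Because the constituent subspaces are closed and pairwise orthogonal, a standard Hilbert-space fact (the closed linear hull of a family of pairwise orthogonal closed subspaces equals their orthogonal direct sum) immediately gives
\[
\Jscr_n=\bigoplus_{(j_1,\ldots,j_n)\in\Nbb\p n}\Jscr_n\p{(j_1,\ldots,j_n)},
\]
and the same argument applied to terminal random variables (noting the isometry $\|X\|_{\Hscr\p{\,2}}=\|X_T\|_2$) yields the analogue for $\Jscr_{n,T}$.

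The only subtle point, and really the one obstacle, is the bookkeeping of where orthogonality lives: Proposition \ref{prop:orth.sim.mul.int} is stated at the level of the terminal random variables, whereas the decomposition of $\Jscr_n$ is inside $\Hscr\p2$. This is painless because by Proposition \ref{prop:isom.sim.mul.int} (i) all elements of $\Jscr_n\p{(j_1,\ldots,j_n)}$ lie in $\Hscr\p{\,2}_0$, so orthogonality of two such elements $M,N$ at time $T$ forces $\aPP{M}{N}\equiv0$ (since $\Ebb[M_TN_T]=\Ebb[\aPP{M}{N}_T]=0$ and $\aPP{M}{N}$ is the compensator of $[M,N]$, which is itself identically zero whenever the constituent covariations vanish). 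Hence the two decompositions—one in $\Hscr\p2$ and one in $L\p2(\Pbb)$—are equivalent, and \eqref{eq:mul.int.orth.sum} follows.
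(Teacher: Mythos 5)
Your proposal is correct and follows essentially the same route as the paper: pairwise orthogonality of the spaces $\Jscr_{n}\p{(j_1,\ldots,j_n)}$ via Proposition \ref{prop:orth.sim.mul.int} and the mutual orthogonality of $\Xscr$, the standard Hilbert-space identification of the closed linear hull of pairwise orthogonal closed subspaces with their orthogonal direct sum, and Proposition \ref{prop:equivCRP} to assemble the sum over $n$. The "subtle point" in your last paragraph is actually moot (and your claim that $L\p2$-orthogonality at time $T$ forces $\aPP{M}{N}\equiv0$ is not needed and not true in general): the inner product on $\Hscr\p{\,2}$ is by definition $(X,Y)_{\Hscr\p{\,2}}=\Ebb[X_TY_T]$, so orthogonality of terminal variables in $L\p2(\Pbb)$ is literally orthogonality in $\Hscr\p{\,2}$, which is all the decomposition requires.
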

\begin{proof}
We only verify the second relation. The space $\Jscr_{n,T}\p{(j_1,\ldots,j_n)}$ is closed in $L\p2(\Pbb)$ for every fixed $(j_1,\ldots,j_n)$ and $n\geq1$. If $(j_1,\ldots,j_n)\neq(i_1,\ldots,i_n)$, because of Proposition \ref{prop:orth.sim.mul.int} and the mutual orthogonality of the martingales in $\Xscr$, then $\Jscr_{n,T}\p{(j_1,\ldots,j_n)}$ and $\Jscr_{n,T}\p{(i_1,\ldots,i_n)}$ are orthogonal in $L\p2(\Pbb)$, $n\geq1$. For every fixed $n\geq1$, we put \[\Cscr\p{(n)}:=\bigoplus_{(j_1,\ldots,j_n)\in\Nbb\p n }\Jscr_{n,T}\p{(j_1,\ldots,j_n)}\,.\] Then $\Cscr\p{(n)}$ is closed because it is an orthogonal sum of countably many mutually orthogonal closed subspaces of $L\p2(\Pbb)$. Furthermore $\Cscr\p{(n)}$ contains $\Jscr_{n,T}\p{(j_1,\ldots,j_n)}$ for every $(j_1,\ldots,j_n)\in\Nbb\p n$ and hence also $\Jscr_{n,T}$ (cf.\ Definition \ref{def:sp.mul.int}). Conversely, from the definition of $\Jscr_{n,T}$ it is evident that the inclusion $\Cscr\p{(n)}\subseteq\nolinebreak\Jscr_{n,T}$ ho
 lds and, consequently,  $\Cscr\p{(n)}=\Jscr_{n,T}$, $n\geq1$. The statement of the theorem follows now from Proposition \ref{prop:equivCRP}.
\end{proof}
As a consequence of Theorem \ref{thm:mul.int.orth.sum} and Definition \ref{def:crp}, if the family $\Xscr:=\{\X{n},\ n\geq1\}$ consisting of countably many orthogonal martingales possesses the CRP on $L\p2(\Pbb)$, then the following orthogonal decompositions of $\Hscr\p{\,2}$ and $L\p2(\Pbb)$ hold:
\begin{equation}\label{eq:crp.or.sys}
\Hscr\p{\,2}=\Jscr_0\oplus\bigoplus_{n=1}\p\infty
\bigoplus_{(j_1,\ldots,j_n)\in\Nbb\p n} \Jscr_{n}\p{(j_1,\ldots,j_n)}\,,\qquad L\p2(\Pbb)=\Jscr_{0,T}\oplus\bigoplus_{n=1}\p\infty
\bigoplus_{{(j_1,\ldots,j_n)\in\Nbb\p n} }\Jscr_{n,T}\p{(j_1,\ldots,j_n)}\,.
\end{equation}

\section{Compensated-Covariation Stable Families and Iterated Integrals}\label{sec:ccs.f.mul.int}
We fix a time parameter $T>0$, a complete probability space $(\Om,\Fscr,\Pbb)$, a filtration $\Fbb=(\Fscr_{\,t})_{t\in[0,T]}$ satisfying the usual conditions and a family $\Xscr:=\{\X{\al},\ \al\in\Lm\}$ contained in $\Hscr\p{\,2}(\Fbb)$ indexed on the set $\Lm$. We recall that we always assume that $\aPP{\X{\al}}{\X{\bt}}$ is deterministic for all $\al,\bt\in\Lm$ without explicit mention.
In this section we study the properties of iterated integrals generated by $\Xscr$ under the further assumption that $\Xscr$ is a \emph{compensated-covariation stable} family of $\Hscr\p{\,2}$.

For $\al,\bt\in \Lm$ we define the process
\begin{equation}\label{def:compcov}
\N{\al,\bt}:=\cv{\N{\al}}{\N{\bt}}-\pb{\N{\al}}{\N{\bt}}
\end{equation}
which we call \emph{the compensated-covariation} process of $\N{\al}$ and $\N{\bt}$. The process $\pb{\N{\al}}{\N{\bt}}$ being the compensator of $[\N{\al},\N{\bt}]$, $\N{\al,\bt}$ is always a martingale and $\X{\al,\bt}_{\,0}=\X{\al}_{\,0}\X{\bt}_{\,0}$.
\begin{definition}\label{def:fam.Z}
(i) We say that the family $\Xscr:=\{\N{\al},\ \al\in \Lm\}\subseteq \Hscr\p{\,2}(\Fbb)$ is \emph{compensated-covariation stable} if for all $\al, \ \bt\in\Lm$ the compensated-covariation process $\N{\al,\bt}$ belongs to $\Xscr$.

(ii) Let $\Xscr$ be a compensated-covariation stable family and let $\al_1,\ldots,\al_m\in \Lm$ with $m\geq2$. The process $\N{\al_1,\ldots,\al_m}$ is defined recursively by
\begin{equation}\label{eq:it.com.cov}
\N{\al_1,\ldots,\al_m}:=[\N{\al_1,\ldots,\al_{m-1}},\N{\al_m}]
-\pb{\N{\al_1,\ldots,\al_{m-1}}}{\N{\al_m}}.
\end{equation}
\end{definition}
If $\Xscr$ is compensated-covariation stable, the process $\N{\al_1,\ldots,\al_m}$ belongs to $\Xscr$ for every $\al_1,\ldots,\al_m$ in $\Lm$ and $\N{\al_1,\ldots,\al_m}_{\,0}=\prod_{i=1}\p m \X{\al_i}_{\,0}$, $m\geq2$.

We begin with the following proposition. For the notations we refer to Section \ref{sec:def.it.int}. Suppose that $\Xscr:=\{\X{\al},\ \al\in\Lm\}$ is a compensated-covariation stable family of $\Hscr\p{\,2}$. We introduce the notation $\XX{\al}:=\X{\al}-\X{\al}_{\,0}$, for any $\al\in\Lm$. Note that $\XX{\al}\in\Hscr^2_0$, $\al\in\Lm$.
\bg{proposition}\label{MI} The stochastic integral $\XX{\al}_-\cdot M$ belongs to $\bigoplus_{k=0}\p{n+1}\scr J_{k,0}$, for every $\al\in\Lm$, $M\in \scr J_{n,0}$ and $n\geq0$.
\e{proposition}
\begin{proof} The proof will be given by induction on the order $n$ of the iterated integral $M\in\Jscr_{n,0}$. If $n=0$, i.e., $M\in\scr J_{0,0}$, then $\XX{\al}_-\cdot M\equiv 0\in\Jscr_{0,0}\subseteq \bigoplus_{k=0}\p{1}\scr J_{k,0}$. Let now $M\in\Jscr_{1,0}$. By linearity it suffices to take $M=J\p{(\al_1)}_1(F)$ where $F=F_0\otimes F_1$ is an elementary function of order $1$. Obviously, we have  $\XX{\al}=J^{(\al)}_1(1\otimes1)$ and $M=F_0\,(F_1\cdot \X{\al_1})$ from which it follows

\beqas\XX{\al}_-\cdot M_t&=&\int_0^t F_0\,J\p{(\al)}_1(1\otimes1)_{u-}\, F_1(u)\, \d \X{\al_1}_u\\
&=&J^{(\al,\al_1)}_2(F_0\otimes1\otimes F_1)\,.
\eeqas
This shows that $\XX{\al}_-\cdot M\in \Jscr_{2,0}\subseteq \bigoplus_{k=0}\p 2\Jscr_{k,0}$.
We now we fix $n\geq 2$ and assume that the statement is satisfied for all $M\in \scr J_{n,0}$. For the induction step it is enough to prove that $\XX{\al}_-\cdot M\in\bigoplus_{k=0}\p{n+2}\scr J_{k,0}$ for all $M$ from $\scr J_{n+1,0}^{(\al_1,\ldots, \al_{n+1})}$ and $\al_1,\ldots, \al_{n+1}\in \Lm$. To this end, let $M\in\scr J_{n+1,0}^{(\al_1,\ldots, \al_{n+1})}$ for some $\al_1,\ldots, \al_{n+1}$ from $\Lm$. In view of the linearity of $\scr J_{n+2,0}$ and of the iterated integral, without loss of generality, we can assume that $M$ is an \textit{elementary} iterated integral with respect to $(\X{\al_1}, \X{\al_2}, \ldots\ , \X{\al_{n+1}})$, i.e., $M$ has the representation
\begin{equation}\label{eq:rep.M}
M_{\,t}=J_{n+1}^{(\al_1,\ldots, \al_{n+1})}(F_0\otimes F_1\otimes\ldots\otimes F_{n+1})_{\,t}=\int_0^t N_{u-}\,F_{n+1}(u)\,\d {\X{\al_{n+1}}}_u,\quad t\in[0,T],
\end{equation}
where $N\in \scr J_{n,0}^{(\al_1,\ldots, \al_{n})}$ can  be written in the form
\beq\label{repN}
N_{\,t}=J_{n}^{(\al_1,\ldots, \al_{n})}(F_0\otimes\ldots\otimes F_{n})_{\,t} =\int_0^t R_{u-}\,F_{n}(u)\,\d {\X{\al_{n}}}_u,\quad t\in[0,T],
\eeq
with $R=J_{n-1}^{(\al_1,\ldots, \al_{n-1})}(F_0\otimes\ldots\otimes F_{n-1})\in \scr J_{n-1,0}^{(\al_1,\ldots, \al_{n-1})}$ and $F=F_0\otimes\ldots \otimes F_{n+1}$ is an elementary function of order $n+1$. Using partial integration for the product $\XX{\al}\,N$, the identities $N_0=N_{0-}=0$, $N_-\cdot \XX{\al}=N_-\cdot \X{\al}$ and $[\XX{\al},N]=[\X{\al},N]$ we get 
\beqa
\nonumber \XX{\al}_-\cdot M_{\,t}&=&\int_0^t \XX{\al}_{u-}\,N_{u-}\,F_{n+1}(u)\,\d {\X{\al_{n+1}}}_{u}\\
\label{firstterm}
&=&\int_0^t \Big(\int_0^{u-}N_{v-}\,\d \X{\al}_{v}\Big)\,F_{n+1}(u)\,\d {\X{\al_{n+1}}}_{u}\\
\label{secondterm}&&+\int_0^t \Big(\int_0^{u-}\XX{\al}_{v-}\,\d N_{v}\Big)\,F_{n+1}(u)\,\d {\X{\al_{n+1}}}_{u}\\
\label{thirdterm} &&+\int_0^t \big[\X{\al},N\big]_{u-}\,F_{n+1}(u)\,\d {\X{\al_{n+1}}}_{u}\,.
\eeqa
The term \eqref{firstterm} is equal to $J_{n+2,0}^{(\al_1,\ldots,\al_n, \al,\al_{n+1})}(F_0 \otimes\ldots\otimes F_{n}\otimes 1\otimes F_{n+1})_{\,t}$ and hence this process belongs to $\scr J_{n+2,0}^{(\al_1,\ldots,\al_n, \al,\al_{n+1})}\subseteq \bigoplus_{k=0}\p{n+2}\scr J_{k,0}$. The second term \eqref{secondterm} belongs to $\bigoplus_{k=0}\p{n+2}\scr J_{k,0}$ because, in view of the induction hypothesis, the integrand $\XX{\al}_-\cdot N$ belongs to $\bigoplus_{k=0}\p{n+1}\scr J_{k,0}$. Now we consider the third term \eqref{thirdterm}. Using the representation \eqref{repN} we can write
\beq\label{pointbrackets}
\big[ \X{\al},N\big]_{\,t}=\int_0^t R_{u-}\,F_n(u)\, \d \big[\X{\al_n},\X{\al}\big]_u,\quad
\aPP{\X{\al}}{N}_{\,t}=\int_0^t R_{u-}\,F_n(u)\, \d \aPP{\X{\al_n}}{\X{\al}}_u
\eeq
and hence, using the linearity of the stochastic integral and \eqref{eq:it.com.cov}, we get
\beq\label{inserting}
\big[ \X{\al},N\big]_{\,t}-\aPP{\X{\al}}{N}_{\,t}
=\int_0^t R_{u-}\,F_n(u) \,\d \X{\al_n,\al}_u\,.
\eeq
The third term \eqref{thirdterm} can be rewritten as
\beq
\begin{split}
\eqref{thirdterm}
=\nonumber\int_0^t \Big(\big[ \X{\al},N\big]_{u-}-\aPP{\X{\al}}{N}_{u-}\Big)\,F_{n+1}(u)\,\d {\X{\al_{n+1}}}_{u}
+\int_0^t \aPP{\X{\al}}{N}_{u-}\,F_{n+1}(u)\,\d {\X{\al_{n+1}}}_{u}
\end{split}
\eeq
and inserting \eqref{pointbrackets} and \eqref{inserting} in the previous equality we get that \eqref{thirdterm} is equal to
\beq\label{lastterm}
\int_0^t\!\! \int_0^{u-}\!\!\!\! R_{v-}\,F_n(v) \,\d \X{\al_n,\al}_v\,F_{n+1}(u)\,\d {\X{\al_{n+1}}}_{u}
+\int_0^t\!\! \int_0^{u-}\!\!\!\! R_{v-}\,F_n(v)\, \d \aPP{\X{\al_n}}{\X{\al}}_v\,F_{n+1}(u)\,\d {\X{\al_{n+1}}}_{u}\,.
\eeq
Since $R\in \scr J_{n-1,0}$, $F_n$ is bounded and, $\Xscr$ being compensated-covariation stable, $\X{\al_n,\al}\in\Xscr$, we can conclude that the right-hand side of \eqref{inserting} belongs to $\scr J_{n,0}$ and hence the first term in \eqref{lastterm} is an element of $\scr J_{n+1,0}\subseteq \scr \bigoplus_{k=0}\p{n+2}\scr J_{k,0}$.
Finally, for proving that the second term of \eqref{lastterm} belongs to $\bigoplus_{k=0}\p{n+2}\scr J_{k,0}$ we calculate its inner integral using partial integration:
\beqas
\lefteqn{\int_0^t R_{v-}\,F_n(v)\, \d \aPP{\X{\al_n}}{\X{\al}}_v}\\
&=&R_{\,t}\int_0^t F_n(s)\, \d \aPP{\X{\al_n}}{\X{\al}}_s-\int_0^t \int_0^{v-} F_n(s)\,\d \aPP{\X{\al_n}}{\X{\al}}_{s}\,\d R_v-\Big[R,\int_0^\cdot F_n(s)\,\d \aPP{\X{\al_n}}{\X{\al}}_s\Big]_{\,t}\\
&=&R_{\,t}\int_0^t F_n(s)\, \d \aPP{\X{\al_n}}{\X{\al}}_s-\int_0^t \int_0^{v-}F_n(s)\,\d \aPP{\X{\al_n}}{\X{\al}}_{s}\,\d R_v\\
&&-\int_0^t \big(\Delta\int_0^\cdot F_n(s)\,\d \aPP{\X{\al_n}}{\X{\al}}_s\big)_v\,\d R_v\\
&=&R_{\,t}\int_0^t F_n(s)\, \d \aPP{\X{\al_n}}{\X{\al}}_s-\int_0^t \int_0^{v}F_n(s)\,\d \aPP{\X{\al_n}}{\X{\al}}_{s}\,\d R_v
\eeqas
where in the last but one equality we have used \citet{JS00}, Proposition I.4.49 b). Substituting this in the second term of
\eqref{lastterm} we get
\beqa\label{eq:last}
\nonumber\lefteqn{\int_0^t \int_0^{u-} R_{v-}\,F_n(v)\, \d \aPP{\X{\al_n}}{\X{\al}}_v\,F_{n+1}(u)\,\d {\X{\al_{n+1}}}_{u}}\\
&=&\int_0^t R_{u-}\int_0^{u-} F_n(s)\, \d \aPP{\X{\al_n}}{\X{\al}}_s\,F_{n+1}(u)\,\d {\X{\al_{n+1}}}_{u}\\
\nonumber&&
-\int_0^t\int_0^{u-} \int_0^vF_n(s)\,\d \aPP{\X{\al_n}}{\X{\al}}_{s}\,\d R_v\,F_{n+1}(u)\,\d {\X{\al_{n+1}}}_{u}\,.
\eeqa
The first summand on the right hand side belongs to $\scr J_{n,0}$ because $R\in \scr J_{n-1,0}$ and the function $\wt{F}_{n+1}$ given by $\wt{F}_{n+1}(u):=\int_0^{u-} F_n(s)\, \d \aPP{\X{\al_n}}{\X{\al}}_s\,F_{n+1}(u)$ is bounded. 
From $R=J_{n-1,0}^{(\al_1,\ldots, \al_{n-1})}(F_0\otimes\ldots\otimes\nolinebreak F_{n-1})$ and the fact that the function $F$ with $F(v):=\int_0^vF_n(s)\,\d \aPP{\X{\al_n}}{\X{\al}}_{s}$ is bounded, we similarly obtain
\beq
\nonumber\int_0^{u} \int_0^v F_n(s)\,\d \aPP{\X{\al_n}}{\X{\al}}_{s}\,\d R_v=\int_0^{u} F(v)\,\d R_v
=J_{n-1,0}^{(\al_1,\ldots, \al_{n-1})}(F_0\otimes\ldots\otimes (FF_{n-1}))_u
\eeq
and hence the second integral of the right-hand side of \eqref{eq:last} is equal to
\beqs
\int_0^t J_{n-1,0}^{(\al_1,\ldots, \al_{n-1})}(F_0\otimes\ldots\otimes (FF_{n-1}))_{u-} \,F_{n+1}(u)\,\d \X{\al_{n+1}}_u
=J_{n,0}^{(\al_1,\ldots, \al_{n-1},\al_{n+1})}(F_0\otimes\ldots\otimes (FF_{n-1})\otimes F_{n+1})_{\,t}
\eeqs
which belongs to $\scr J_{n,0}\subseteq \bigoplus_{k=0}\p{n+2}\scr J_{k,0}$. The proof of the proposition is finished.
\end{proof}
Now we come to the the main result of this section. Recall that $\XX{\al}:=\X{\al}-\X{\al}_{\,0}$, $\al\in\Lm$.
\bg{theorem}\label{MItheorem}
Let $\scr X:=\{\X{\al},\ \al\in\Lm\}\subseteq\scr H\p{\,2}(\Fbb)$ be a compensated-covaria\-tion stable family. Then the stochastic integral $\big(\prod_{i=1}^m\XX{\al_i}_-\big)\cdot M$ belongs to $\bigoplus_{k=0}\p{n+m}\scr J_{k,0}$, for all $m\geq0$, $\al_1,\ldots,\al_m\in \Lm$ and martingales $M\in \bigoplus_{k=0}\p{n}\scr J_{k,0}$, for every $n\geq0$.
\e{theorem}
\begin{proof} The proof will be given by induction on $m$. If $m=0$, because $\prod_{i=1}\p0\XX{\al_i}_-:=1$ by convention, then the claim is evident because $M=1\cdot M$. We now assume that the statement of the theorem holds for $m\geq1$ and $M\in\bigoplus_{k=0}\p{n}\scr J_{k,0}$, for any $n\geq0$, and prove it for $m+1$. Let $\al_1,\dots,\al_{m+1}$ be given. Setting $\wt{M}:=\XX{\al_{m+1}}_-\cdot M$, we can calculate
\beqs
\big(\prod_{i=1}^{m+1}\XX{\al_i}_-\big)\cdot M =\big(\prod_{i=1}^{m}\XX{\al_i}_-\big)\cdot \big(\XX{\al_{m+1}}_-\cdot M\big)
= \big(\prod_{i=1}^{m}\XX{\al_i}_-\big)\cdot \wt{M}\,.
\eeqs
From Proposition \ref{MI} we obtain that $\wt{M}\in\bigoplus_{k=0}\p{n+1}\scr J_{k,0}$. The induction hypothesis yields that the right hand side belongs to $\bigoplus_{k=0}\p{(n+1)+m}\scr J_{k,0}=\bigoplus_{k=0}\p{n+(m+1)}\scr J_{k,0}$. This proves the induction step and hence the proof of the theorem is complete.
\end{proof}
An immediate consequence of Theorem \ref{MItheorem} is the following corollary:
\bg{corollary}\label{CMI}
Let $\scr X:=\{\X{\al},\ \al\in\Lm\}\subseteq\scr H\p{\,2}(\Fbb)$ be a compensated-covaria\-tion stable family. Then the process
$F_0\big(\prod_{i=1}^m\XX{\al_i}_-\big)\cdot \X{\al}$ belongs to $\bigoplus_{k=0}\p{n+1}\scr J_{k,0}$ for all bounded $\mc F_0$-measurable $F_0$ and parameters $\al, \al_1,\ldots,\al_m\in \Lm$, $m\geq1$.
\e{corollary}
\vspace{-15pt}
\begin{proof} We have $(F_0\prod_{i=1}\p m\XX{\al_i}_{-})\cdot\X{\al}=(\prod_{i=1}\p m\XX{\al_i}_{-})\cdot(F_0\,\XX{\al})$
and we can apply Theorem 4.3 to the martingale $M=J^{(\al)}_1(F_0\otimes1)=F_0\,\XX{\al}$.
\end{proof}
We conclude this section with the next corollary which shows that if $\Xscr=\{\X{\al},\ \al\in\Lm\}$ is a subfamily of $\Hscr\p{\,2}$ satisfying the assumptions of Theorem \ref{MItheorem} and another technical condition, then the random variable $\XX{\al}_{\,t}$ has finite absolute moments of every order for all $\al\in\Lm$ and $t\in[0,T)$.
\bg{corollary}
Let $\scr X=\{\X{\al},\ \al\in\Lm\}\subseteq\scr H\p{\,2}$ be a compensated-covaria\-tion stable family. If there exists $\bt\in \Lm$ such that $\aP{\X{\bt}}_{\,t}<\aP{\X{\bt}}_{\,T}$ for all $t<T$ then, for every $\al\in \Lm$ and $t<T$, $\XX{\al}_{\,t}$ has finite absolute moments of arbitrary order. 
\e{corollary}
\begin{proof} From Corollary \ref{CMI} we have $X:=(\XX{\al}_-)\p{\,m}\cdot \X{\bt}\in\bigoplus_{k=0}\p{m+1}\Jscr_{k,0}$. This implies that $X$ belongs to $\Hscr\p{\,2}$ and
$\aP{X}_{\,T}=\int_0^T \vert\XX{\al}_{u-}\vert^{2m}\, \d \aP{\X{\bt}}_u$, therefore
\[
\int_0^T \EE\big[\vert\XX{\al}_{u-}\vert^{2m}\big]\, \d \aP{\X{\bt}}_u=\EE\big[\aP{X}_{\,T}\big]<+\infty\,.
\]
Using the martingale property of $(\XX{\al},\mb F)$ and Jensen's inequality for conditional expectations, for every $t<u$, we can estimate
\[
\big|\XX{\al}_{\,t}\big|^{2m}=\big|\EE\big[\XX{\al}_u|\scr F_{\,t}\big]\big|^{2m}
=\big|\EE\big[\EE\big[\XX{\al}_u|\scr F_{u-}\big]|\scr F_{\,t}\big]\big|^{2m}
=\big|\EE\big[\XX{\al}_{u-}|\scr F_{\,t}\big]\big|^{2m}
\leq\EE\big[\big|\XX{\al}_{u-}\big|^{2m}|\scr F_{\,t}\big]
\]
which yields
$\EE\big[\big|\XX{\al}_{\,t}\big|^{2m}\big]\leq \EE\big[\big|\XX{\al}_{u-}\big|^{2m}\big],\quad 0\leq t<u\,.$
Hence we obtain
\[\big(\aP{\X{\bt}}_{\,T}-\aP{\X{\bt}}_{\,t}\big)\,\EE\big[\big| \XX{\al}_{\,t}\big|^{2m}\big]
\leq\int_{(t,T]}\EE\big[\big|\XX{\al}_{u-}\big|^{2m}\big]\,\d \aP{\X{\bt}}_u <+\infty\,.
\]
Let $\bt$ be chosen such that $\aP{\X{\bt}}_{\,t}<\aP{\X{\bt}}_{\,T}$ for all $t<T$. Then the above inequality yields
$\EE\big[\big|\XX{\al}_{\,t}\big|^{2m}\big]<+\infty,\ t<T$, which proves the claim.
\end{proof}

\section{The Chaotic Representation Property}\label{sec:chaos}
In this section we shall give sufficient conditions for a subfamily $\Xscr$ of $\Hscr\p{\,2}$ to possess the CRP.  For this purpose it will be useful to work with families of martingales in $\Hscr\p{\,2}$ which are stable under stopping with respect to deterministic stopping times.
For a given $\Xscr:=\{\X{\al},\ \al\in\Lm\}\subseteq\Hscr\p{\,2}$ and a collection $\Sscr$ of \emph{finite-valued} stopping times we define the family $\Xscr\p\Sscr$ by
\begin{equation}\label{def:stop.com.cov.fam}
\Xscr\p\Sscr:=\{{\N{\al}}\p\tau,\ \tau\in\Sscr,\ \al\in \Lm\},
\end{equation}
where the superscript $\tau$ denotes the operation of stopping at $\tau\in\Sscr$. It is clear that $\Xscr\p\Sscr\subseteq\Hscr\p{\,2}$. The following lemma states a condition on $\Sscr$ ensuring that $\Xscr\p\Sscr$ is a compensated-covariation stable family whenever $\Xscr$ is one. We will be particularly interested in the case $\Sscr=\Rbb_+$. Using the properties of the brackets $[\cdot,\cdot]$ and $\pb{\cdot}{\cdot}$ the proof is straightforward and therefore omitted. 
\begin{lemma}\label{lem:def:stop.com.cov.fam}
If $\Sscr$ is a minimum-stable family of finite-valued stopping times and the family $\Xscr\subseteq\Hscr\p{\,2}$ is compensated-covariation stable, then $\Xscr\p\Sscr$ is compensated-covariation stable, too.
\end{lemma}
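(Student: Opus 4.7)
The plan is to reduce the compensated-covariation of two stopped members of $\Xscr$ to a single stopped compensated-covariation process of members of $\Xscr$, and then invoke the two hypotheses. Concretely, pick arbitrary $\al,\bt\in\Lm$ and $\tau,\sig\in\Sscr$; one must show that the compensated-covariation process of the stopped martingales $(\X{\al})\p\tau$ and $(\X{\bt})\p\sig$ again lies in $\Xscr\p\Sscr$.

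The key identities I would use are the standard rules for how $\cv{\cdot}{\cdot}$ and $\pb{\cdot}{\cdot}$ interact with stopping:
\[
\cv{(\X{\al})\p\tau}{(\X{\bt})\p\sig}=\cv{\X{\al}}{\X{\bt}}\p{\tau\wedge\sig},\qquad \pb{(\X{\al})\p\tau}{(\X{\bt})\p\sig}=\pb{\X{\al}}{\X{\bt}}\p{\tau\wedge\sig}.
\]
The first follows from the definition \eqref{def:cov}: the continuous martingale part satisfies $((\X{\al})\p\tau)\p c=((\X{\al})\p c)\p\tau$ and likewise for $\bt$, so $\pb{((\X{\al})\p\tau)\p c}{((\X{\bt})\p\sig)\p c}=\pb{(\X{\al})\p c}{(\X{\bt})\p c}\p{\tau\wedge\sig}$, while the jump sum in \eqref{def:cov} is automatically truncated at $\tau\wedge\sig$ because $\Delta(\X{\al})\p\tau$ vanishes past $\tau$ and similarly for $\bt,\sig$. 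The second identity then follows from the characterization of $\pb{\cdot}{\cdot}$ as the dual predictable projection (compensator) of $\cv{\cdot}{\cdot}$, together with the fact that stopping at the stopping time $\tau\wedge\sig$ commutes with the dual predictable projection by uniqueness of the compensator.

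Subtracting the two identities gives $\cv{(\X{\al})\p\tau}{(\X{\bt})\p\sig}-\pb{(\X{\al})\p\tau}{(\X{\bt})\p\sig}=\N{\al,\bt}\p{\tau\wedge\sig}$, where $\N{\al,\bt}$ is the compensated-covariation process introduced in \eqref{def:compcov}. Now both hypotheses apply: compensated-covariation stability of $\Xscr$ yields $\N{\al,\bt}\in\Xscr$, and minimum-stability of $\Sscr$ yields $\tau\wedge\sig\in\Sscr$, which moreover is finite-valued since $\tau$ and $\sig$ are. By the very definition \eqref{def:stop.com.cov.fam} of $\Xscr\p\Sscr$, the process $\N{\al,\bt}\p{\tau\wedge\sig}$ lies in $\Xscr\p\Sscr$, as required. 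There is no real obstacle; the only careful point is the verification of the two identities above, which is routine bookkeeping with \eqref{def:cov} and the uniqueness of the compensator, and this is presumably why the authors omit the proof.
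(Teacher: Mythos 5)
Your proof is correct and is precisely the "straightforward" argument the paper has in mind when it omits the proof: the identities $[(\X{\al})\p\tau,(\X{\bt})\p\sig]=[\X{\al},\X{\bt}]\p{\tau\wedge\sig}$ and $\pb{(\X{\al})\p\tau}{(\X{\bt})\p\sig}=\pb{\X{\al}}{\X{\bt}}\p{\tau\wedge\sig}$, followed by the two stability hypotheses. Nothing is missing.
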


Now we come to a useful representation formula for products of elements of a compensated-covariation stable family $\Xscr$.
Its proof is given by induction using integration by parts.
See \citet{DE13}, Proposition 3.3, where $X_0=0$ for every $X\in\Xscr$ was additionally assumed. However, using our convention $X_{0-}=0$ and the definition \eqref{def:cov} of the quadratic covariation $[X,Y]$ (including the jump $\Delta X_0\Delta Y_0=X_0Y_0$ at time zero), in case of a general compensated covariation stable family $\Xscr\subseteq\Hscr^2$, the reader may notice that the formula and its proof are completely the same.

\begin{proposition}\label{prop:rep.pol}
Let $\Xscr:=\{\N{\al},\ \al\in \Lm\}$ be a compensated-covariation stable family of $\Hscr\p{\,2}$. For every $m\geq1$ and $\al_{1},\ldots,\al_{m}\in \Lm$, we have
\begin{equation}\label{eq:rep.pol}
\begin{split}
\prod_{i=1}\p m\N{\al_i}
=&
\sum_{i=1}\p m\sum_{{1\leq j_1<\ldots<j_i\leq m}}\Bigl(\prod_{\begin{subarray}{c}k=1\\k\neq j_1,\ldots,j_i\end{subarray}}\p m \N{\al_k}_-\Bigr)\cdot\N{\al_{j_1},\ldots,\al_{j_i}}
\\
&+
\sum_{p=0}\p{m-2}\sum_{i=p+2}\p m\sum_{1\leq j_1<\ldots<j_i\leq m}\Bigl(\prod_{\begin{subarray}{c}k=1\\k\neq j_1,\ldots,j_i\end{subarray}}\p m\N{\al_k}_-
\prod_{\ell=i-p+1}\p{i} \Delta\N{\al_{j_\ell}}
\Bigr)\cdot
\pb{\N{\al_{j_1},\ldots,\al_{j_{i-p-1}}}}{\N{\al_{j_{i-p}}}}.
\end{split}
\end{equation}
\end{proposition}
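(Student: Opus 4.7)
The plan is to proceed by induction on $m$, following closely the scheme of \citet{DE13}, Proposition 3.3, where the identity was proved under the simplifying assumption $X_{\,0}=0$ for every $X\in\Xscr$. In the present setting, the convention $X_{0-}=0$ together with $\cv{X}{Y}_0=X_0Y_0$ forces the initial-value contribution $\prod_{i=1}\p m\X{\al_i}_0$ of the left-hand side of \eqref{eq:rep.pol} to reappear on the right as the jump at zero of $\X{\al_1,\ldots,\al_m}$ inside the unique $i=m$ summand of the first sum; all other terms vanish at $t=0$, since their integrands contain at least one left-limit factor $\X{\al_k}_{\,0-}=0$ or, in the second sum, an integrator starting at zero. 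This explains why the nonzero initial values cause no complications and why the argument of \cite{DE13} can be carried over almost verbatim.

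For the base case $m=2$, I would apply the standard integration-by-parts identity $\X{\al_1}\X{\al_2}=\X{\al_1}_{-}\cdot\X{\al_2}+\X{\al_2}_{-}\cdot\X{\al_1}+\cv{\X{\al_1}}{\X{\al_2}}$ and split the bracket by its very definition \eqref{def:compcov}, i.e.\ $\cv{\X{\al_1}}{\X{\al_2}}=\X{\al_1,\al_2}+\pb{\X{\al_1}}{\X{\al_2}}$. The two stochastic integrals together with the martingale $\X{\al_1,\al_2}$ exhaust the first sum of \eqref{eq:rep.pol}, while $\pb{\X{\al_1}}{\X{\al_2}}$ is the unique $p=0$, $i=2$ contribution of the second sum (whose empty jump product collapses to $1$).

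For the inductive step, setting $P:=\prod_{i=1}\p m\X{\al_i}$, I would apply integration by parts to $P\,\X{\al_{m+1}}$, obtaining
\[
P\,\X{\al_{m+1}}=P_{-}\cdot\X{\al_{m+1}}+\X{\al_{m+1}}_{-}\cdot P+\cv{P}{\X{\al_{m+1}}},
\]
and substitute the formula \eqref{eq:rep.pol} for $P$ into the last two summands. In $\X{\al_{m+1}}_{-}\cdot P$, the associativity of the stochastic and Riemann--Stieltjes integrals produces terms in which $\al_{m+1}$ stays outside every subset $\{j_1,\ldots,j_i\}\subseteq\{1,\ldots,m\}$. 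In $\cv{P}{\X{\al_{m+1}}}$, the linearity $\cv{H\cdot N}{\X{\al_{m+1}}}=H\cdot\cv{N}{\X{\al_{m+1}}}$ together with the recursion \eqref{eq:it.com.cov}, which reads $\cv{\X{\al_{j_1},\ldots,\al_{j_i}}}{\X{\al_{m+1}}}=\X{\al_{j_1},\ldots,\al_{j_i},\al_{m+1}}+\pb{\X{\al_{j_1},\ldots,\al_{j_i}}}{\X{\al_{m+1}}}$, absorbs $\al_{m+1}$ into the subset as its last element; for bracket terms where $N\in\Vscr$ is already a predictable covariation, the identity $\cv{H\cdot N}{\X{\al_{m+1}}}=(H\,\Delta\X{\al_{m+1}})\cdot N$ generates the extra jump factor $\Delta\X{\al_{j_\ell}}$ needed in the inner product of the second sum.

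The main obstacle, and indeed the only genuine work, is the combinatorial bookkeeping: one must verify that summing all contributions produced above yields exactly the right multiplicities so that each subset $\{j_1<\ldots<j_i\}\subseteq\{1,\ldots,m+1\}$, together with the auxiliary counter $p$, is hit the correct number of times. Since this matching is completely independent of the presence of $X_0$, it is identical to the one carried out in \cite{DE13}, Proposition 3.3; I would therefore invoke that proof for the combinatorial core and complement it only with the brief check at $t=0$ indicated in the first paragraph, which confirms that the representation holds including at the initial time.
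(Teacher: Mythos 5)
Your proposal is correct and follows essentially the same route as the paper, which likewise proves the identity by induction with integration by parts, defers the combinatorial core to \citet{DE13}, Proposition 3.3, and observes that the conventions $X_{0-}=0$ and $[X,Y]_0=X_0Y_0$ make the formula carry over verbatim to nonzero initial values. Your explicit check of the base case and of the initial-value bookkeeping at $t=0$ is a welcome elaboration of what the paper leaves to the reader, but it is not a different argument.
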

Note that the initial value of the left-hand side of \eqref{eq:rep.pol} is given by the initial value of the first term on the right-hand side for $i=m$.

A simplification of formula \eqref{eq:rep.pol} can be obtained by assuming that the family $\Xscr$ consists of \emph{quasi-left continuous} martingales. Indeed, in this case one can choose continuous versions of the processes $\pb{\N{\al}}{\N{\bt}}$, $\al,\bt\in \Lm$ (cf.\ \citet{JS00}, Theorem I.4.2), and so all the terms appearing in the second sum on the right-hand side of \eqref{eq:rep.pol} vanish for $p\neq0$.

Let $\Xscr:=\{\X{\al},\ \al\in\Lm\}$ be a subfamily of $\Hscr\p{\,2}$. The filtration $\Fbb\p\Xscr:=(\Fscr_{t}\p\Xscr)_{t\in[0,T]}$ is defined as the smallest filtration satisfying the usual conditions and with respect to which each process in $\Xscr$ is adapted.
In the remaining part of the present paper we shall consider $\Xscr$ as a subfamily of $\Hscr\p{\,2}(\Fbb\p\Xscr)$ on the probability space $(\Om,\Fscr\p\Xscr_T,\Pbb)$.

We define the family $\Kscr$ by
\begin{equation}\label{def:fam.mon}
\textstyle\Kscr:=\big\{\prod_{i=1}\p m\N{\al_i}_{\,t_i}, \ \al_i\in \Lm,\ t_i\in[0,T],\ i=1,\ldots,m; \ m\geq0\big\}
\end{equation}
which is the family of monomials formed by products of elements of $\Xscr$ at different times. Obviously, $\sigma(\Kscr)$ augmented by the $\Pbb$-null sets of $\Fscr\p\Xscr_T$ equals $\Fscr\p\Xscr_T$.
We make the following assumption:
\begin{assumption}\label{ass:tot.pol}
The family $\Kscr$ defined in \eqref{def:fam.mon} is contained in $L\p{\,2}(\Om,\Fscr\p\Xscr_T,\Pbb)$ and is total (i.e., its linear hull is dense) in $L\p{\,2}(\Om,\Fscr\p\Xscr_T,\Pbb)$.
\end{assumption}
If $\Xscr=\{\X{\al},\ \al\in\Lm\}$ satisfies Assumption \ref{ass:tot.pol}, then $\N{\al}_{\,t}$ admits finite moments of every order for all $t\in[0,T]$ and $\al\in \Lm$, i.e.,
\begin{equation}\label{ass:ex.mom}
\Ebb[|\N{\al}_{\,t}|\p m]<+\infty,\quad t\in[0,T],\quad \al\in \Lm,\quad m\geq1.
\end{equation}
Sufficient conditions on the family $\Xscr$ for $\Kscr$ to fulfil Assumption \ref{ass:tot.pol} are extensively studied in the literature. The following well-known result, being useful for many applications, exploits the existence of finite exponential moments. For an elementary proof cf.\ \citet{DE13}, Theorem A.4.
\begin{theorem}\label{thm:den.pol}
If for every $\al\in \Lm$ and $t\geq0$ there exists a constant $c_\al(t)>0$ such that the expectation
$\Ebb[\exp(c_\al(t)|\N{\al}_{\,t}|)]$ is finite, then $\Kscr$ satisfies Assumption \ref{ass:tot.pol}.
\end{theorem}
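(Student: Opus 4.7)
The plan is to first verify $\Kscr\subseteq L\p2(\Pbb)$ and then to use the classical Hamburger-type moment argument: a finite exponential moment makes the moment problem determinate, and the orthogonality of a square integrable random variable to all monomials forces it to vanish.

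First I would establish integrability. Fix $\al_1,\ldots,\al_m\in\Lm$ and $t_1,\ldots,t_m\in[0,T]$. From $\Ebb[\exp(c_{\al_i}(t_i)|\N{\al_i}_{\,t_i}|)]<+\infty$ and the bound $|x|\p k\leq k!\,c\p{-k}\exp(c|x|)$ for every $c>0$ and $k\geq0$, each $\N{\al_i}_{\,t_i}$ has finite absolute moments of every order. An application of the generalized H\"older inequality then gives $\Ebb\bigl[\prod_{i=1}\p m|\N{\al_i}_{\,t_i}|\p{2}\bigr]\leq \prod_{i=1}\p m\Ebb\bigl[|\N{\al_i}_{\,t_i}|\p{2m}\bigr]\p{1/m}<+\infty$, so $\Kscr\subseteq L\p2(\Om,\Fscr\p\Xscr_T,\Pbb)$.

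To prove totality, I would show that any $Y\in L\p2(\Om,\Fscr\p\Xscr_T,\Pbb)$ which is orthogonal to every monomial in $\Kscr$ vanishes a.s. Fix $n\geq1$ and finitely many pairs $(\al_i,t_i)$, $i=1,\ldots,n$, and consider on $(\Rbb\p n,\Bscr(\Rbb\p n))$ the finite signed measure $\mu_Y$ defined as the pushforward of $Y\cdot\Pbb$ under the vector $(\N{\al_1}_{\,t_1},\ldots,\N{\al_n}_{\,t_n})$. Orthogonality of $Y$ to $\Kscr$ gives $\int_{\Rbb\p n}x_1\p{k_1}\cdots x_n\p{k_n}\,\rmd\mu_Y(x)=0$ for all $k_1,\ldots,k_n\geq0$. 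The key point is that $\mu_Y$ has a finite exponential moment in a neighbourhood of the origin: by Cauchy--Schwarz and a second application of H\"older, choosing $c>0$ small enough in terms of the $c_{\al_i}(t_i)$'s, one estimates
\[
\int_{\Rbb\p n}\exp\Bigl(c\sum_{i=1}\p n|x_i|\Bigr)\,\rmd|\mu_Y|(x)\leq\|Y\|_2\,\prod_{i=1}\p n\Ebb\bigl[\exp\bigl(cn|\N{\al_i}_{\,t_i}|\bigr)\bigr]\p{1/(2n)}<+\infty.
\]
Hence the Laplace transform of $\mu_Y$ is well defined and holomorphic on a complex neighbourhood of $0\in\Cbb\p n$, and its Taylor coefficients at $0$ are precisely the mixed moments of $\mu_Y$, which all vanish. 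Therefore the Laplace transform is identically zero near the origin, then everywhere on the imaginary axis by analytic continuation, and so $\mu_Y=0$ by uniqueness of the Fourier transform.

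This means $\Ebb[Y\,\mathbf 1_A]=0$ for every $A\in\sigma(\N{\al_1}_{\,t_1},\ldots,\N{\al_n}_{\,t_n})$. The collection of all such sets, taken over all finite choices of indices and times, is a $\pi$-system generating $\Fscr\p\Xscr_T$ (up to $\Pbb$-null sets, by completion), so a monotone class argument yields $\Ebb[Y\,\mathbf 1_A]=0$ for every $A\in\Fscr\p\Xscr_T$, hence $Y=0$ $\Pbb$-a.s. The main delicate point is the quantitative choice of $c$ making the multivariate exponential moment of $|\mu_Y|$ finite; once this is in place the moment-problem argument is standard, and the result follows.
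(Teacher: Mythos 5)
Your proof is correct and follows essentially the same route as the elementary proof the paper points to (\citet{DE13}, Theorem A.4): the exponential moments give all polynomial moments and hence $\Kscr\subseteq L\p2(\Pbb)$, and the determinacy argument via the analytic Laplace/Fourier transform of the signed measure $Y\cdot\Pbb$ pushed forward by a finite subvector $(\N{\al_1}_{\,t_1},\ldots,\N{\al_n}_{\,t_n})$ shows that orthogonality to all monomials forces $Y=0$, after which the $\pi$-system/monotone class step is routine. The only blemish is a bookkeeping slip in the displayed estimate (after Cauchy--Schwarz and the generalized H\"older inequality one gets $\prod_{i=1}\p n\Ebb\bigl[\exp\bigl(2cn|\N{\al_i}_{\,t_i}|\bigr)\bigr]\p{1/(2n)}$, so one should choose $2cn\leq\min_i c_{\al_i}(t_i)$), which does not affect the argument.
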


Now we state two technical lemmas which will be needed in the proof of Proposition \ref{prop:tot.term.rv.int} below.
\begin{lemma}\label{lem:int.prod.Z}
Let $A$ be a deterministic process of finite variation, $\Xscr:=\{\N{\al},\ \al\in \Lm\}$ a family of martingales contained in $\Hscr\p{\,2}$ satisfying \eqref{ass:ex.mom} and $p,q\geq0$. We define the processes $K$ by
\begin{equation}\label{def:proc.H}
K:=\prod_{j=1}\p q\Delta\N{\bt_{j}}\prod_{i=1}\p p\N{\al_i}_{-},\quad \al_i,\bt_j\in \Lm,\quad i=1,\ldots,p;\ j=1,\ldots,q.
\end{equation}
Then the process $K\cdot A$ is of integrable variation.
\end{lemma}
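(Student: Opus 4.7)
The idea is to reduce the claim to a uniform-in-$s$ moment bound on $|K_s|$. Since $A$ is deterministic, $\mathrm{Var}(A)$ is a fixed finite-valued increasing function on $[0,T]$. Writing
\[
\mathrm{Var}(K\cdot A)_T = \int_0^T |K_s|\,\rmd\mathrm{Var}(A)_s,
\]
it suffices, by Tonelli, to show
\[
\Ebb\Big[\int_0^T |K_s|\,\rmd\mathrm{Var}(A)_s\Big] = \int_0^T \Ebb[|K_s|]\,\rmd\mathrm{Var}(A)_s < +\infty,
\]
which in turn will follow from $\sup_{s\in[0,T]}\Ebb[|K_s|] < +\infty$.

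The first step is to absorb the jump factors into suprema over $[0,T]$. Using $|\Delta\N{\bt_j}_s| \leq |\N{\bt_j}_s| + |\N{\bt_j}_{s-}|$ and $|\N{\al_i}_{s-}| \leq \sup_{t\in[0,T]}|\N{\al_i}_t|$, we obtain the pointwise bound
\[
|K_s| \leq \prod_{i=1}^p \sup_{t\in[0,T]}|\N{\al_i}_t| \cdot \prod_{j=1}^q 2\sup_{t\in[0,T]}|\N{\bt_j}_t|.
\]
The second step is an application of the generalised Hölder inequality with exponent $r := p+q$ (taking $r = 2$ in the degenerate case $p+q\leq 1$):
\[
\Ebb[|K_s|] \leq 2^q \prod_{i=1}^p \Ebb\Big[\sup_{t\in[0,T]}|\N{\al_i}_t|^{r}\Big]^{1/r} \prod_{j=1}^q \Ebb\Big[\sup_{t\in[0,T]}|\N{\bt_j}_t|^{r}\Big]^{1/r}.
\]

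The third and final step is to bound each $L^{r}$-norm of the running supremum. Each $\N{\al}$ is a martingale in $\Hscr\p{\,2}$ and, by the hypothesis \eqref{ass:ex.mom}, $\N{\al}_T\in L\p r(\Pbb)$ for every $r\geq 1$. Doob's $L\p r$-inequality (for $r>1$) then yields $\Ebb[\sup_{t\in[0,T]}|\N{\al}_t|^{r}] \leq (r/(r-1))\p{r}\,\Ebb[|\N{\al}_T|^{r}] < +\infty$, and analogously for the $\bt_j$'s. Substituting these bounds into the preceding inequality gives a constant independent of $s$, which yields the required uniform bound on $\Ebb[|K_s|]$ and therefore proves the claim.

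I do not anticipate a serious obstacle: the only subtlety is the need to pass from pointwise moments of $\N{\al}_t$ (given by \eqref{ass:ex.mom}) to moments of the running supremum, which is precisely what Doob's maximal inequality provides once we know $\N{\al}_T$ lies in every $L\p r(\Pbb)$.
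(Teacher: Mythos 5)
Your proof is correct and follows essentially the same route as the paper's: both bound $\sup_{t\in[0,T]}|K_t|$ by $2^q$ times a product of running suprema of the $|\N{\al_i}|$, $|\N{\bt_j}|$, and then invoke Doob's maximal inequality together with \eqref{ass:ex.mom} to show this dominating random variable is integrable. The only (cosmetic) differences are that you separate the product via the generalised H\"older inequality where the paper uses the elementary bound $\prod_{k=1}^m a_k\leq m^{-1}\sum_{k=1}^m a_k^m$, and that you pass through Tonelli where the paper simply bounds $\Var(K\cdot A)_T\leq \sup_{t}|K_t|\,\Var(A)_T$ pathwise.
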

\begin{proof}
Obviously,
$\vert\Var(K\cdot A_T)\vert\le \sup_{0\le t\le T} \vert K_t\vert\, \Var(A)_T$.
We will show below that $\sup_{0\le t\le T} \vert K_t\vert$ is integrable and, because $\Var(A)_T$ is deterministic and finite, this yields that $\Var(K\cdot A_T)$ is integrable and hence the claim.
For proving that $\sup_{0\le t\le T} \vert K_t\vert$ is integrable, we estimate
\bg{eqnarray}
\nonumber\sup_{0\le t\le T}\vert K_t\vert&\leq& \prod_{j=1}^q 2\sup_{0\le t\le T}\vert\N{\bt_{j}}_t\vert\,\;\;\prod_{i=1}^p\sup_{0\le t\le T}\vert\N{\al_{i}}_{t}\vert\\
\nonumber&\le&2^{q-1}\,\Big(\prod_{j=1}^q \sup_{0\le t\le T}\vert\N{\bt_{j}}_t\vert^{2}+\prod_{i=1}^p\sup_{0\le t\le T}\vert\N{\al_{i}}_{t}\vert^{2}\Big)\\
\label{Doob}&\le&2^{q-1}\Big(q^{-1}\sum_{j=1}^q\sup_{0\le t\le T}\vert\N{\bt_{j}}_t\vert^{2q}+p^{-1}\sum_{i=1}^p\sup_{0\le t\le T}\vert\N{\al_{i}}_{t}\vert^{2p}\Big)
\e{eqnarray}
where we have used the inequality $\prod_{k=1}^m a_k\leq m^{-1}\,\sum_{k=1}^m a_k^m$ for all nonnegative numbers $a_1, \ldots, a_m$ and $m\in\Nbb$. Now from \eqref{ass:ex.mom} it follows that $\vert\X{\al}\vert^m$ is a nonnegative submartingale for all $\al\in\Lm$ and $m\in\Nbb$. Using Doob's inequality we can conclude that the right-hand side of \eqref{Doob} is integrable which completes the proof.
\end{proof}

\begin{lemma}\label{lem:orth.lef.lim.prod}
Let $\Xscr=\{\X{\al},\ \al\in \Lm\}\subseteq\Hscr\p{\,2}$ be a family of martingales which satisfies \eqref{ass:ex.mom}; $q,r\geq1$ be fixed and $\xi\in L\p{\,2}(\Pbb)$ be such that, for some $\al_1,\ldots,\al_{q+r}\in \Lm$,  $\xi$ is orthogonal in $L\p{\,2}(\Pbb)$ to $\prod_{k=1}\p{q+r}\N{\al_k}_{t_k}$ for every $t_1,\ldots,t_{q+r}\in[0,T]$. Then $\xi$ is orthogonal to $\prod_{k=1}\p q\N{\al_{k}}_{t-}\prod_{j=q+1}\p {q+r}\N{\al_j}_{t}$ for every $t\in[0,T]$.
\end{lemma}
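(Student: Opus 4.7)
The plan is to prove this by taking a strict left-approach limit in the first $q$ time arguments and then passing to the limit inside the expectation by dominated convergence.

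First I would fix $t\in[0,T]$ and set $t_{q+1}=\ldots=t_{q+r}=t$. Choosing an arbitrary sequence $(s_1^{(n)},\ldots,s_q^{(n)})$ with $s_k^{(n)}\in[0,t)$ and $s_k^{(n)}\uparrow t$ as $n\to\infty$ for every $k=1,\ldots,q$, the hypothesis of the lemma gives
\[
\Ebb\Big[\xi\,\prod_{k=1}^{q} X^{(\al_k)}_{s_k^{(n)}}\prod_{j=q+1}^{q+r} X^{(\al_j)}_{t}\Big]=0
\quad\text{for every }n\geq 1.
\]
Since the processes $X^{(\al_k)}$ are c\`adl\`ag, the integrand converges $\Pbb$-almost surely to $\xi\,\prod_{k=1}^{q} X^{(\al_k)}_{t-}\prod_{j=q+1}^{q+r} X^{(\al_j)}_{t}$, so the desired orthogonality will follow once dominated convergence is justified.

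For the dominating function I would use $|\xi|\cdot\prod_{k=1}^{q+r}\sup_{s\in[0,T]}|X^{(\al_k)}_s|$. Its integrability is the heart of the argument and, by Cauchy--Schwarz, reduces to showing that $\prod_{k=1}^{q+r}\sup_{s\in[0,T]}|X^{(\al_k)}_s|$ belongs to $L^2(\Pbb)$. Applying the H\"older inequality in the form
\[
\Ebb\Big[\prod_{k=1}^{q+r}\sup_{s\in[0,T]}|X^{(\al_k)}_s|^{2}\Big]
\leq\prod_{k=1}^{q+r}\Ebb\big[\sup_{s\in[0,T]}|X^{(\al_k)}_s|^{2(q+r)}\big]^{\frac{1}{q+r}},
\]
it suffices to bound each factor on the right-hand side. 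Because $|X^{(\al_k)}|^{2(q+r)}$ is a nonnegative submartingale (Jensen's inequality combined with the existence of the moments \eqref{ass:ex.mom}), Doob's maximal inequality gives
\[
\Ebb\big[\sup_{s\in[0,T]}|X^{(\al_k)}_s|^{2(q+r)}\big]
\leq \Big(\tfrac{2(q+r)}{2(q+r)-1}\Big)^{2(q+r)}\Ebb\big[|X^{(\al_k)}_T|^{2(q+r)}\big]<+\infty,
\]
where the last finiteness is exactly assumption \eqref{ass:ex.mom}.

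Having the dominating function in place, Lebesgue's dominated convergence theorem yields
\[
0=\lim_{n\to\infty}\Ebb\Big[\xi\,\prod_{k=1}^{q} X^{(\al_k)}_{s_k^{(n)}}\prod_{j=q+1}^{q+r} X^{(\al_j)}_{t}\Big]
=\Ebb\Big[\xi\,\prod_{k=1}^{q} X^{(\al_k)}_{t-}\prod_{j=q+1}^{q+r} X^{(\al_j)}_{t}\Big],
\]
which is the desired orthogonality. The only genuine obstacle is producing the dominating function; once the moment assumption \eqref{ass:ex.mom} has been combined with Doob's inequality and H\"older (exactly as in the estimate \eqref{Doob} used in the proof of Lemma \ref{lem:int.prod.Z}), the rest of the argument is just the continuity statement of dominated convergence.
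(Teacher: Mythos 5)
Your proof is correct and follows essentially the same route as the paper: approximate the first $q$ time arguments strictly from the left, dominate by $|\xi|$ times a product of running suprema whose integrability follows from \eqref{ass:ex.mom} via Doob's inequality (the paper defers this to the estimate in Lemma \ref{lem:int.prod.Z}, using an arithmetic--geometric mean bound where you use H\"older, a cosmetic difference), and conclude by dominated convergence.
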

\bg{proof}
Fixing $t\in[0,T]$, similar as in the proof of Lemma \ref{lem:int.prod.Z} above we can show that
$$
\vert\xi\vert\prod_{k=1}\p q\sup_{0\le u\le T} \vert\N{\al_{k}}_{u}\vert\;\prod_{j=q+1}\p {q+r} \vert\N{\al_j}_{t}\vert
$$
is integrable. Choosing $t_n$ such that $t_n<t$ for all $n\in\Nbb$ and $t_n\rightarrow t$ as $n\rightarrow\infty$, we obtain, for all $n\in\Nbb$, $\Ebb[\xi\prod_{k=1}\p q\N{\al_{k}}_{t_n}\prod_{j=q+1}\p {q+r}\N{\al_j}_{t}]=0$ in view of the assumption and letting $n\rightarrow\infty$ the claim follows from Lebesgue's theorem on dominated convergence.
\e{proof}
Let $\Xscr:=\{\X{\al},\ \al\in \Lm\}$ be a compensated-covariation stable family of $\Hscr\p2$ such that, for every $\al,\bt\in \Lm$, the process $\pb{\X{\al}}{\X{\bt}}$ is deterministic. We introduce the following systems:
\begin{equation}\label{def:term.rv.int}
\begin{aligned}
&\Rscr:=\{F_0\,\textstyle(\dis{\prod_{i=1}\p m} \XX{\al_i}_{-})\cdot\X{\al},\quad \al,\al_1,\ldots,\al_m\in \Lm,\ m\geq0,\ F_0\in\Jscr_{0,0}\}\cup\Jscr_{0,0}\,,\\[2mm]
&\Rscr_T:=\{F_0\,\textstyle(\dis{\prod_{i=1}\p m} \XX{\al_i}_{-})\cdot\X{\al}_T,\quad \al,\al_1,\ldots,\al_m\in \Lm,\ m\geq0,\ F_0\in\Jscr_{0,0}\}\cup\Jscr_{0,0}\,.
\end{aligned}
\end{equation}
We stress that $\Rscr$ is contained in $\Hscr^2$ and $\Rscr_{T}$ in $L\p2(\Om,\Fscr\p\Xscr_T,\Pbb)$ and that $\Xscr\subseteq\cl(\Span(\Rscr))_{\Hscr\p{\,2}}$, hence $\Xscr$ and $\Rscr$ generate the same filtration. 

The next elementary identity will be useful in the proof of Proposition \ref{prop:tot.term.rv.int} below. For real numbers $a_r,b_r$, $r=1,\ldots,m$, we have:
\begin{equation}\label{eq:prod.n.bin}
 \prod_{r=1}\p m(a_r+b_r)=\sum_{r=0}\p m\sum_{1\leq q_1<\ldots<q_r\leq m}\prod_{\begin{subarray}{c}k=1\\k\neq q_1,\ldots,q_r\end{subarray}}\p {m}\!\!\!\!\!\!\!\! a_k\prod_{\ell=1}\p r b_{q_\ell}\,.
\end{equation}

The following proposition being used for the proof of Theorem \ref{thm:crp.ccs.fam}, is of interest in its own right.
\begin{proposition}\label{prop:tot.term.rv.int}
Suppose that $\Xscr$ is stable under stopping with respect to deterministic stopping times and that Assumption \ref{ass:tot.pol} holds. Then $\Rscr$ is total in $\Hscr^2$ and $\Rscr_T$ is total in $L\p2(\Om,\Fscr\p\Xscr_T,\Pbb)$.
\end{proposition}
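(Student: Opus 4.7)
The plan is to reduce both assertions to the single claim that every monomial $\prod_{i=1}\p m\X{\al_i}_{t_i}$ (with $m\geq0$, $\al_i\in\Lm$, $t_i\in[0,T]$) lies in $\cl(\Span(\Rscr_T))_{L\p2(\Pbb)}$. Totality of $\Rscr_T$ in $L\p2(\Om,\Fscr\p\Xscr_T,\Pbb)$ then follows from Assumption \ref{ass:tot.pol}, and totality of $\Rscr$ in $\Hscr\p{\,2}$ is equivalent through the isometric identification $M\leftrightarrow M_T$. A first simplification exploits the stability of $\Xscr$ under deterministic stopping to write $\X{\al_i}_{t_i}=((\X{\al_i})\p{t_i})_T$ with $(\X{\al_i})\p{t_i}\in\Xscr$; Lemma \ref{lem:def:stop.com.cov.fam}, applied to the minimum-stable family $[0,T]$, preserves compensated-covariation stability under this operation, so we may henceforth assume $t_i=T$ and work with martingales $Y\p{(1)},\ldots,Y\p{(m)}\in\Xscr$.

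I then argue by strong induction on $m$. The base cases $m=0,1$ are immediate: constants belong to $\Jscr_{0,0}\subseteq\Rscr_T$, and $Y_T=(1\cdot Y)_T\in\Rscr_T$ with $F_0=1$ and an empty product of $\wt Y$-factors. Assuming the claim for every degree strictly less than $m$, I apply the product formula \eqref{eq:rep.pol} to $\prod_{i=1}\p m Y\p{(i)}$ and evaluate at $T$. The first sum yields stochastic integrals $(\prod_{k\notin\{j_1,\ldots,j_i\}}Y\p{(k)}_-)\cdot Y\p{(\al_{j_1},\ldots,\al_{j_i})}$ whose integrators lie in $\Xscr$ by compensated-covariation stability. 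Splitting each $Y\p{(k)}_-=\wt{Y\p{(k)}}_-+Y\p{(k)}_{\,0}$ via the identity \eqref{eq:prod.n.bin} recasts every such term as a finite linear combination of processes of the form $G_0\cdot(\prod_{\ell}\wt{Y\p{(k_\ell)}}_-)\cdot Y\p{(\al_{j_1},\ldots,\al_{j_i})}$, in which $G_0$ is an $\Fscr_0$-measurable product of initial values, square-integrable in view of the moment bounds \eqref{ass:ex.mom} supplied by Assumption \ref{ass:tot.pol}. Approximating $G_0$ in $L\p{\,2}(\Om,\Fscr_0,\Pbb)$ by bounded $F_0\p n\in\Jscr_{0,0}$ and invoking the $L\p{\,2}$-continuity of the stochastic integral in its $\Fscr_0$-measurable prefactor places the terminal values of these integrals in $\cl(\Span(\Rscr_T))$.

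The genuinely new ingredient is the treatment of the second sum of \eqref{eq:rep.pol}: evaluated at $T$, every such term is a Riemann--Stieltjes integral $\int_0\p T E_s\,\rmd A_s$ against the deterministic finite-variation process $A=\pb{\X{\al_{j_1},\ldots,\al_{j_{i-p-1}}}}{\X{\al_{j_{i-p}}}}$, whose integrand $E_s=\big(\prod_{k\notin\{j_1,\ldots,j_i\}}Y\p{(k)}_{s-}\big)\big(\prod_{\ell=i-p+1}\p{i}\Delta Y\p{(\al_{j_\ell})}_s\big)$ carries exactly $m-i+p\leq m-2$ factors, since $i\geq p+2$. (Lemma \ref{lem:int.prod.Z} ensures this integral exists and has integrable variation.) Approximating $A$ by piecewise-constant functions on refining partitions $0=s_0<s_1<\ldots<s_N=T$ reduces the integral to finite sums $\sum_n E_{s_n}\,\Delta A_n$; then expanding each $\Delta Y\p{(\cdot)}_{s_n}=Y\p{(\cdot)}_{s_n}-Y\p{(\cdot)}_{s_n-}$ represents $E_{s_n}$ as a signed sum of time-mixed products of degree at most $m-2$ in the variables $Y\p{(\cdot)}_{s_n}$ and $Y\p{(\cdot)}_{s_n-}$. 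The left limits $Y\p{(\cdot)}_{s_n-}$ are in turn approximated in $L\p{\,2}$ by $Y\p{(\cdot)}_{s_n-1/k}$ via dominated convergence applied with Doob's maximal inequality and the moments of \eqref{ass:ex.mom}, so that $E_{s_n}$ is an $L\p{\,2}$-limit of linear combinations of genuine monomials of degree $\leq m-2$; by the induction hypothesis such monomials lie in $\cl(\Span(\Rscr_T))$, hence so does $\sum_n E_{s_n}\Delta A_n$ and, passing to the limit in the partition, $\int_0\p T E_s\,\rmd A_s$ itself.

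The chief obstacle is precisely this control of the compensator terms: one must obtain $L\p{\,2}(\Pbb)$-convergence, not merely pathwise convergence, both in the Riemann-sum approximation of $\int_0\p T E_s\,\rmd A_s$ and in the left-limit approximation $Y\p{(\cdot)}_{s_n-1/k}\to Y\p{(\cdot)}_{s_n-}$. Here the moment bounds \eqref{ass:ex.mom} extracted from Assumption \ref{ass:tot.pol} are indispensable: combined with Doob's inequality (in the spirit of Lemma \ref{lem:int.prod.Z}) and with Minkowski's inequality exchanged with the $\rmd A$-integral, they furnish the integrable dominating functions needed to push every approximation through in the required $L\p{\,2}$ topology.
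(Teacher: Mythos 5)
Your argument is correct in substance and shares the paper's skeleton --- strong induction on the degree, the product formula \eqref{eq:rep.pol}, the splitting $\X{\al}=\XX{\al}+\X{\al}_{\,0}$ via \eqref{eq:prod.n.bin} for the first sum, and reduction of the compensator terms to lower-degree monomials at left-limit times --- but you run it on the primal side (every monomial lies in $V:=\cl(\Span(\Rscr_T))_{L\p2(\Pbb)}$), whereas the paper runs it on the dual side (every $\xi$ orthogonal to $\Rscr_T$ is orthogonal to every monomial, after which Assumption \ref{ass:tot.pol} forces $\xi=0$). The two formulations are equivalent and each of your steps has a direct translation, but the choice of side matters at exactly one place: the second sum of \eqref{eq:rep.pol}. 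There you must show that $\int_0\p T E_s\,\rmd A_s$ belongs to the closed subspace $V$ knowing only that each $E_s$ does, and your Riemann-sum route is the delicate point of the whole proof: for $p\geq1$ the integrand $E_s$ contains jump factors, so it vanishes off the countable set of jump times and the integral is carried entirely by the atoms of the deterministic measure $\rmd A$, which a generic partition with generic evaluation points will miss. This is repairable (split $A$ into its continuous and atomic parts; the continuous part annihilates the jump factors, and the atomic part is an absolutely convergent series in $L\p2(\Pbb)$ of terms $E_s\,\Delta A_s\in V$), but it is avoided altogether by observing that, for a closed subspace $V$, the implication ``$E_s\in V$ for all $s$ entails $\int_0\p T E_s\,\rmd A_s\in V$'' is proved by testing against $\xi\perp V$ and applying the scalar Fubini theorem --- which is literally the paper's argument, with integrability supplied by Lemma \ref{lem:int.prod.Z} and the left limits handled by Lemma \ref{lem:orth.lef.lim.prod}. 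So the dual formulation buys an elementary Fubini in place of an $L\p2(\Pbb)$-valued integration argument, while yours buys an explicit description of the approximating elements; your remaining ingredients (truncating the unbounded $\Fscr_0$-prefactor and dominating via \eqref{ass:ex.mom} and Doob's inequality, and the degree count $m-i+p\leq m-2$ coming from $i\geq p+2$) coincide with the paper's.
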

\begin{proof} It is sufficient to verify the second claim.
We are going to show that any $\xi\in L\p2(\Om,\Fscr\p\Xscr_T,\Pbb)$ which is orthogonal to $\Rscr_T$ is orthogonal in $L\p2(\Om,\Fscr\p\Xscr_T,\Pbb)$ to $\prod_{i=1}\p m\X{\al_i}_{T}$ for every $\al_1,\ldots,\al_m\in \Lm$ and $m\geq0$. The stability under stopping with respect to deterministic stopping times of the family $\Xscr$ then yields that $\xi$ is also orthogonal to $\Kscr$, where $\Kscr$ is given in \eqref{def:fam.mon}. But by Assumption \ref{ass:tot.pol} the system $\Kscr$ is total in $L\p2(\Om,\Fscr\p\Xscr_T,\Pbb)$ and therefore $\xi$ is evanescent. This implies that $\Rscr_T\subseteq L\p2(\Om,\Fscr\p\Xscr_T,\Pbb)$ is a total subset and therefore the claim of the proposition.

Let $\xi\in L\p2(\Om,\Fscr\p\Xscr_T,\Pbb)$ be orthogonal to $\Rscr_T$. For verifying that $\xi$ is orthogonal to $\prod_{i=1}\p m\X{\al_i}_{T}$ for every $\al_1,\ldots,\al_m\in \Lm$ and $m\in\Nbb\cup\{0\}$, we proceed by strong induction on $m$. For $m=0$, we have $\prod_{i=1}\p 0\X{\al_i}_{T}=1\in\Rscr_T$ and hence by assumption $\xi$ is orthogonal to $\prod_{i=1}\p 0\X{\al_i}_{T}$. Let us now assume that for some fixed $m\geq1$
\begin{equation}\label{eq:thm.com.cov.fam.prp.in.hyp}
\Ebb\Bigl[\xi\prod_{i=1}\p n\N{\al_i}_{\,T}\Bigr]=0,\quad \al_1,\ldots,\al_n\in \Lm, \quad n\leq m \ .
\end{equation}
From \eqref{eq:thm.com.cov.fam.prp.in.hyp} and the property that $\Xscr$ is stable under stopping with respect to deterministic stopping times, it easily follows that
\begin{equation}\label{eq:orth.mon.in.hyp}
\Ebb\Bigl[\xi\prod_{i=1}\p n\N{\al_i}_{t_i}\Bigr]=0,\quad t_1,\ldots,t_n\in[0,T],\quad \al_1,\ldots,\al_n\in \Lm,\quad n\leq m.
\end{equation}
We now show that $\Ebb[\xi\prod_{i=1}\p{m+1}\X{\al_i}_{\,T}]=0$ for all $\al_1,\ldots, \al_{m+1}\in\Lm$. Representing $\prod_{i=1}\p{m+1}\X{\al_i}_{\,T}$ by the product formula \eqref{eq:rep.pol}, we get
\beqa\label{eq:rep.prod.orth}
\nonumber\;\;\;\Ebb\Bigl[\xi\!\!\!\prod_{i=1}\p {m+1}\N{\al_i}_{\,T}\Bigr]
\!\!\!\!\!\!&=&\!\!\!\!\!\!\!\sum_{i=1}\p{m+1}\sum_{1\leq j_1<\ldots<j_i\leq m+1}\!\!\!\!\!\!\Ebb\bigg[\!\xi\!\!\bigg(\!\!\Bigl(\!\!\!\!\prod_{\begin{subarray}{c}k=1\\k\neq j_1,\ldots,j_i\end{subarray}}\p {m+1}\!\!\!\!\!\!\!\!\N{\al_k}_-\Bigr)\cdot\X{\al_{j_1},\ldots,\al_{j_i}}_T\bigg)\bigg]\\
&\!\!\!\!\!\!\!\!\!\!\!\!\!\!\!\!\!\!\!\!\!\!\!\!\!\!\!\!\!\!\!\!\!\!\!\!\!\!\!\!\!\!\!\!\!\!\!\!\!\!\!\!\!\!\!\!\!\!\!\!\!\!\!+&\!\!\!\!\!\!\!\!\!\!\!\!\!\!\!\!\!\!\!\!\!\!\!\!\!\!\!\!\!\!\!\!\!\!\!\sum_{p=0}\p{m-1}\!\sum_{i=p+2}\p{m+1}\sum_{1\leq j_1<\ldots<j_i\leq m+1}\!\!\!\!\!\!\Ebb\bigg[\!\xi\!\!\bigg(\!\!\Bigl(\!\!\!\!\prod_{\begin{subarray}{c}k=1\\k\neq j_1,\ldots,j_i\end{subarray}}\p {m+1}\!\!\!\!\!\!\!\!\N{\al_k}_-\!\!\!\!\!
\prod_{\ell=i-p+1}\p{i}\!\!\!\!\!\!\! \Delta\N{\al_{j_\ell}}\!\Bigr)\cdot
\pb{\N{\al_{j_1},\ldots,\al_{j_{i-p-1}}}}{\N{\al_{j_{i-p}}}}_{\,T}\!\!\bigg)\!\!\bigg].
\eeqa
We now analyse the first summand on the right-hand side of \eqref{eq:rep.prod.orth}. The decomposition $\X{\al_k}=\XX{\al_k}+\X{\al_k}_{\,0}$, the identity \eqref{eq:prod.n.bin} with $a_r=\XX{\al_r}_-$ and $b_r=\X{\al_r}_{\,0}$, $r=1,\ldots,m+1$, and the definition of $\Rscr_T$ yields 
\[
\bigg(\prod_{\begin{subarray}{c}k=1\\k\neq j_1,\ldots,j_i\end{subarray}}\p {m+1}\!\!\!\!\!\!\!\!\N{\al_k}_-\!\!\bigg)\cdot\X{\al_{j_1},\ldots,\al_{j_i}}_T\in \cl(\Span(\Rscr_T))_{L\p{\,2}(\Pbb)}\,.
\]
By assumption we have that $\xi$ is orthogonal to $\Rscr_T$ and therefore also to $\cl(\Span(\Rscr_T))_{L\p{\,2}(\Pbb)}$. (Note that $\N{\al}_{\,t}$ admits finite moments of every order for all $t\in[0,T]$ and $\al\in \Lm$, cf. \eqref{ass:ex.mom}.) Hence the first summand in \eqref{eq:rep.prod.orth} vanishes and we get
\[
\Ebb\Bigl[\xi\!\!\!\prod_{i=1}\p {m+1}\N{\al_i}_{\,T}\Bigr]=
\sum_{p=0}\p{m-1}\!\sum_{i=p+2}\p{m+1}\sum_{1\leq j_1<\ldots<j_i\leq m+1}\!\!\!\!\!\!\Ebb\bigg[\!\xi\!\!\bigg(\!\!\Bigl(\!\!\!\!\prod_{\begin{subarray}{c}k=1\\k\neq j_1,\ldots,j_i\end{subarray}}\p {m+1}\!\!\!\!\!\!\!\!\N{\al_k}_-\!\!
\prod_{\ell=i-p+1}\p{i}\!\!\!\!\!\!\! \Delta\N{\al_{j_\ell}}\!\Bigr)\cdot
\pb{\N{\al_{j_1},\ldots,\al_{j_{i-p-1}}}}{\N{\al_{j_{i-p}}}}_{\,T}\!\!\bigg)\!\!\bigg].
\]
The processes $\N{\al_{j_{i-p}}}$ and $\N{\al_{j_1},\ldots,\al_{j_{i-p-1}}}$ belong to $\Xscr$ for every $i$ and $p$, because $\Xscr$ is compensated-covariation stable. By assumption, the processes $\pb{\N{\al_{j_1},\ldots,\al_{j_{i-p-1}}}}{\N{\al_{j_{i-p}}}}$ are deterministic. Lemma \ref{lem:int.prod.Z} implies that
$
(\prod_{k\neq j_1,\ldots,j_i}\p{m+1} \N{\al_k}_-
\prod_{\ell=i-p+1}\p{i} \Delta\N{\al_{j_\ell}})\cdot
\pb{\N{\al_{j_1},\ldots,\al_{j_{i-p-1}}}}{\N{\al_{j_{i-p}}}}
$
are processes of integrable variation. Thus we can apply Fubini's theorem and for every summand we get
\begin{equation}\label{eq:orth.N.prod.thm0}
\begin{split}
&\Ebb\bigg[\xi\bigg(\Bigl(\prod_{\begin{subarray}{c}k=1\\k\neq j_1,\ldots,j_i\end{subarray}}\p{m+1} \N{\al_k}_-\prod_{\ell=i-p+1}\p{i} \Delta\N{\al_{j_\ell}}\Bigr)\cdot
\pb{\N{\al_{j_1},\ldots,\al_{j_{i-p-1}}}}
{\N{\al_{j_{i-p}}}}_{\,T}\bigg)\bigg]\\
&=\Ebb\bigg[\xi\Bigl
(\prod_{\begin{subarray}{c}k=1\\k\neq j_1,\ldots,j_i\end{subarray}}\p{m+1}\hspace{-.2cm} \N{\al_k}_-\prod_{\ell=i-p+1}\p{i} \hspace{-.2cm}\Delta\N{\al_{j_\ell}}\Bigr)_{\cdot}\bigg]\cdot
\pb{\N{\al_{j_1},\ldots,\al_{j_{i-p-1}}}}{\N{\al_{j_{i-p}}}}_{\,T}.
\end{split}
\end{equation}
We consider the generic element
\[
K:=\prod_{\begin{subarray}{c}k=1\\k\neq j_1,\ldots,j_i\end{subarray}}\p{m+1}\N{\al_k}_-\prod_{\ell=i-p+1}\p{i}\Delta\N{\al_{j_\ell}}.
\]
After expanding the product we observe that $K_{\,t}$ is equal to a finite sum of terms of type
\[
\prod_{k=1}\p q\N{\al_{i_k}}_{\,t-}\prod_{j=q+1}\p {q+r}\N{\al_{i_j}}_{\,t}, \quad 1\le i_1, \ldots, i_{q+r}\leq m+1 \ \ \mbox{pairwise different}, \quad q+r\le m-1\,.
\]
From the induction hypothesis, \eqref{eq:orth.mon.in.hyp} and Lemma \ref{lem:orth.lef.lim.prod} we now obtain $\Ebb[\xi \,K_{\,t}]=0$ for all $t\in[0,T]$.
Thus every summand \eqref{eq:orth.N.prod.thm0} vanishes, therefore
$
\Ebb[\xi\prod_{i=1}\p {m+1}\N{\al_i}_{\,T}]=0
$ and the proof of the induction step is finished. Consequently, $\xi$ is orthogonal to $\prod_{i=1}\p {m}\N{\al_i}_{\,T}$ for every $m\geq0$  and the proof is complete.
\end{proof}

Now, from Proposition \ref{prop:tot.term.rv.int} and Theorem \ref{MItheorem} we can deduce the main result of this paper.

\begin{theorem}\label{thm:crp.ccs.fam}
Let $\Xscr:=\{\X{\al},\ \al\in\Lm \}\subseteq\Hscr\p2(\Fbb\p\Xscr)$ be a compensated-covariation stable family such that $\aPP{\X{\al}}{\X{\bt}}$ is deterministic for all $\al,\bt\in\Lm$. Suppose moreover that Assumption \ref{ass:tot.pol} is satisfied. Then $\Xscr$ possesses the CRP on $L\p2(\Om,\Fscr\p\Xscr_T,\Pbb)$.
\end{theorem}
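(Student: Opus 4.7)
The plan is to combine Proposition \ref{prop:tot.term.rv.int} with Corollary \ref{CMI}, bridging them by reducing to a family stable under stopping at deterministic times. Set $\Sscr:=[0,T]$, viewed as a (minimum-stable) collection of deterministic $\Fbb^\Xscr$-stopping times, and consider the family $\Xscr^\Sscr$ defined in \eqref{def:stop.com.cov.fam}. Lemma \ref{lem:def:stop.com.cov.fam} ensures that $\Xscr^\Sscr$ inherits compensated-covariation stability from $\Xscr$; the identity $\langle X^\tau,Y^\sigma\rangle=\langle X,Y\rangle^{\tau\wedge\sigma}$ shows that all predictable covariations remain deterministic; and $\Fbb^{\Xscr^\Sscr}=\Fbb^\Xscr$ since $\Xscr\subseteq\Xscr^\Sscr$ (take $\tau=T$) and every stopped process $\X{\al,\tau}$ is $\Fbb^\Xscr$-adapted. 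Assumption \ref{ass:tot.pol} is inherited by $\Xscr^\Sscr$ as well, because a monomial in $\Xscr^\Sscr$ evaluated at time $T$ has the form $\prod_i\X{\al_i}_{\tau_i\wedge t_i}$, which is itself a monomial in $\Kscr$ from \eqref{def:fam.mon}, and conversely every monomial in $\Kscr$ arises this way.

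Now Proposition \ref{prop:tot.term.rv.int} applied to $\Xscr^\Sscr$ yields that $\Rscr_T^{\Xscr^\Sscr}$ is total in $L^2(\Om,\Fscr^\Xscr_T,\Pbb)$, while Corollary \ref{CMI} applied to $\Xscr^\Sscr$ shows that each generator $F_0\bigl(\prod_{i=1}^m\XX{\al_i,\tau_i}_-\bigr)\cdot \X{\al,\tau}$ of $\Rscr^{\Xscr^\Sscr}$ lies in $\bigoplus_{k=0}^{m+1}\Jscr^{\Xscr^\Sscr}_{k,0}\subseteq\Jscr^{\Xscr^\Sscr}$. The remaining observation is that $\Jscr^{\Xscr^\Sscr}\subseteq\Jscr^\Xscr$: for any deterministic $\tau$ the identity $H\cdot X^\tau=(H\,1_{[0,\tau]})\cdot X$ allows one to absorb the indicators $1_{[0,\tau_k]}$, which are themselves admissible deterministic integrands on $[0,T]$, into each level of an iterated integral. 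Hence every iterated integral with respect to a tuple drawn from $\Xscr^\Sscr$ equals an iterated integral with respect to the corresponding tuple drawn from $\Xscr$, so by linearity and closedness $\Jscr^{\Xscr^\Sscr}\subseteq\Jscr^\Xscr$, and terminal values satisfy $\Rscr_T^{\Xscr^\Sscr}\subseteq\Jscr_T^\Xscr$.

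Putting the pieces together, $\Jscr_T^\Xscr$ is a closed subspace of $L^2(\Om,\Fscr^\Xscr_T,\Pbb)$ (being the orthogonal direct sum of closed subspaces by Proposition \ref{prop:equivCRP}) containing the total set $\Rscr_T^{\Xscr^\Sscr}$, hence equals $L^2(\Om,\Fscr^\Xscr_T,\Pbb)$, which is the CRP by Definition \ref{def:crp}. Essentially the whole analytic content has already been mobilised in Proposition \ref{prop:tot.term.rv.int} (the induction on $m$ using the product expansion \eqref{eq:rep.pol}, which is the real place compensated-covariation stability is exploited through the fact that $\X{\al_1,\ldots,\al_i}\in\Xscr$) and in Theorem \ref{MItheorem} via its Corollary \ref{CMI}. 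The only genuine obstacle here is therefore the bookkeeping of the passage to $\Xscr^\Sscr$: one must be careful that stopping at deterministic times does not enlarge the $\sigma$-field, does not destroy the hypotheses, and in particular does not produce iterated integrals outside $\Jscr^\Xscr$. Once this is in place, the theorem follows at once.
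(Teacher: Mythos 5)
Your proof is correct and follows essentially the same route as the paper: pass to the family $\Xscr\p\Sscr$ of martingales stopped at deterministic times, apply Proposition \ref{prop:tot.term.rv.int} and Corollary \ref{CMI} to that family, and then observe that the iterated integrals it generates already lie in $\Jscr\p\Xscr$. Your explicit absorption of the indicators $1_{[0,\tau]}$ into the integrands is merely a slightly more detailed justification of the paper's final inclusion $\Jscr\p\Zscr\subseteq\Jscr\p\Xscr$, which the paper obtains by writing each stopped process as $J\p{(\al)}_1(1\otimes1_{[0,u]})+\X{\al}_{\,0}$.
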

\begin{proof}
We set $\Sscr:=\Rbb_+$ and introduce the family $\Zscr:=\Xscr\p\Sscr$ as in \eqref{def:stop.com.cov.fam}. Because of Lemma \ref{lem:def:stop.com.cov.fam}, $\Zscr$ is a compensated-covariation stable family of $\Hscr\p{\,2}$. Starting from $\Zscr$ we define the family $\Rscr_{T}$ as in \eqref{def:term.rv.int}. Clearly $\Zscr$ satisfies all the assumptions of Proposition \ref{prop:tot.term.rv.int} and therefore $\Rscr_{T}$ is total in $L\p2(\Om,\Fscr_T\p\Zscr,\Pbb)$. On the other side, Corollary \ref{CMI}  yields that the family $\Rscr_{T}$ is contained in the closed linear space $\Jscr_T\p\Zscr$ of the terminal variables of the iterated integrals generated by $\Zscr$ and hence $\Jscr_T\p\Zscr=L\p2(\Om,\Fscr_T\p\Zscr,\Pbb)$. Furthermore, the identity $\Fbb\p\Zscr=\Fbb\p\Xscr$ holds. Therefore $\Zscr$ possesses the CRP on $L\p2(\Om,\Fscr_T\p\Xscr,\Pbb)$.
To show that $\Xscr$ possesses the CRP on $L\p2(\Om,\Fscr\p\Xscr_T,\Pbb)$ we only need to show that the space $\Jscr\p\Xscr$ of the iterated integrals generated by $\Xscr$ contains the space $\Jscr\p{\Zscr}$ of those generated by $\Zscr$. But this is obvious because for every $X\in\Zscr$ there exist $\al\in\Lm$ and $u\in\Rbb_+$ such that $X={\X{\al}}\p u$, where the superscript $u$ denotes the operation of stopping at the deterministic time $u$. Clearly, the identities $X={(\X{\al}-\X{\al}_{\,0})}^u+ \X{\al}_{\,0}=J^{(\al)}_1(1\otimes1_{[0,u]})+\X{\al}_{\,0}$ hold. This means that $\Zscr\subseteq\Jscr\p\Xscr$ and hence $\Jscr\p{\Zscr}\subseteq\Jscr\p\Xscr$ implying the CRP for $\Xscr$.
\end{proof}

Applications of Theorem \ref{thm:crp.ccs.fam} to L\'evy processes will be given in Section \ref{sec:prp.lev} below. Further applications and examples will be provided in the concluding Section \ref{sec:appl}.

\section{The CRP for L\'evy Processes}\label{sec:prp.lev}
In this section, given a L\'evy Process $L$ on a fixed time horizon $[0,T]$, $T>0$, we construct families of martingales possessing the CRP on $L\p2(\Om,\Fscr\p L_T,\Pbb)$. We start with a short introduction to L\'evy processes and Poisson random measures.

A \cadlag process $L$ on a probability space $(\Om,\Fscr,\Pbb)$ such that $L_0=0$ is called a L\'evy process if it is stochastically continuous and has homogeneous and independent increments.
Let $L$ be a L\'evy process. By $\Fbb\p L=(\Fscr\p L_{\,t})_{t\in[0,T]}$ we denote the natural filtration of $L$, i.e., the smallest filtration satisfying the usual conditions such that $L$ is adapted. From now on, we restrict ourselves to the probability space $(\Om,\Fscr\p L_T,\Pbb)$ and the filtration $\Fbb\p L$. Because $L_0=0$, $\Fscr\p L_0$ is trivial.

On $(E,\Bscr(E)):=([0,T]\times\Rbb,\Bscr([0,T])\otimes\Bscr(\Rbb))$, where $\Bscr(\cdot)$ denotes the Borel $\sig$-algebra, we introduce the random measure $\M$ (cf.\ \citet{JS00}, Definition II.1.3) by
\[
\M(\om,A):=\sum_{s\geq0}1_{\{\Delta L_{\,s}(\om)\neq0\}}1_{A}(s,\Delta L_{\,s}(\om)),\quad\om\in\Om,\quad A\in\Bscr(E)\,.
\]
We call $\M$ the jump measure of $L$. It is known that $\M$ is a homogeneous Poisson random measure relative to the filtration $\Fbb\p L$, i.e., an integer-valued random measure (cf.\ \citet{JS00}, Definition II.1.13)  such that (i) $\Ebb[\M(A)]=(\lm_+\otimes\nu)(A)$ for every $A\in\Bscr(E)$, where $\lm_+$ is the Lebesgue measure on $[0,T]$ and $\nu$ is a $\sig$-additive measure on $\Rbb$; (ii) for all $s\geq0$ and $A\in\Bscr(E)$ such that $A\subseteq(s,T]\times\Rbb$ the random variable $\M(A)$ is independent of $\Fscr_{\,s}\p L$ (cf.\ \citet{JS00}, Definition II.1.20). The $\sig$-finite measure $\nu$ is the L\'evy measure of $L$, which  satisfies $\nu(\{0\})=0$ and $x\p{\,2}\wedge1\in L\p1(\nu)$. We put $\m:=\lm_+\otimes\nu$.
Now we introduce the compensated Poisson random measure associated with the jump measure of $L$. The system defined by $\Escr:=\{A\in\Bscr(E): \m(A)<+\infty\}$ is a ring of Borel subsets of $E$. For every $A\in\Escr$ we define $\Ms(A):=\M(A)-\m(A)$. The family $\Ms:=\{\Ms(A),\ A\in\Escr\}$ is an \emph{elementary orthogonal random measure} (cf.\ \citet{GS74}, IV, \S\ 4), i.e., for every $A,B\in\Escr$, $\Ms(A)\in L\p{\,2}(\Pbb)$; $\Ebb\bigl[\Ms(A)\Ms(B)\bigr]=\m(A\cap B)$ and if, moreover, $A\cap B=\emptyset$, then it follows  $\Ms(A\cup B)=\Ms(A)+\Ms(B)$. We call $\Ms$ the \emph{compensated Poisson random measure} (associated with $\M$).

Next we briefly recall the stochastic integral with respect to the jump measure $\M$ and the associated compensated Poisson random measure $\Ms$ for measurable functions $f$ on $(E,\Bscr(E))$. First we set \[\m(f):=\int_E f(t,x)\,\m(\rmd\, t,\rmd\, x)\] if the integral on the right-hand side exists. If $f\geq0$, $\m(f)$ always exists. Analogously, we can define the integral of $f$ with respect to $\M$ $\om$-\emph{wise}. If $\int_E |f(t,x)|\,\M(\rmd\, t,\rmd\, x)<+\infty$ a.s., we put $\M(f):=\int_E f(t,x)\,\M(\rmd \,t,\rmd\, x)$ a.s.\
and call $\M(f)$ the \emph{stochastic integral} of $f$ with respect to $\M$. From \citet{K02}, Lemma 12.13, we know that $\M(f)$ exists and is finite a.s.\ if and only if $\m(|f|\wedge1)<\infty$. For $f\in L\p1(\m)$ we have $\Ebb[\M(f)]=\m(f)$.

The stochastic integral with respect to $\Ms$ for deterministic functions in $L\p{\,2}(\m)$ is defined as in \citet{GS74}, IV, \S 4, for a general elementary orthogonal random measure, and we do not repeat the definition in detail. We only recall that in a first step the stochastic integral with respect to $\Ms$ is defined for simple functions in $L\p{\,2}(\m)$ and is then extended to arbitrary functions in $L\p{\,2}(\m)$ by isometry using the denseness of the simple functions: There exists a unique isometric mapping on $L\p{\,2}(\m)$ into $L\p{\,2}(\Pbb)$, again denoted by $\Ms$, such that $\Ms(1_B)=\Ms(B)$, $B\in\Escr$.
If $f\in L\p{\,2}(\m)$, then $\Ms(f)$ is called the stochastic integral of $f$ with respect to the compensated Poisson random measure $\Ms$. The proof of the following proposition is left to the reader.
\begin{proposition}\label{prop:int.M.Ms}
If $f\in L\p1(\m)\cap L\p{\,2}(\m)$, then $\Ms(f)=\M(f)-\m(f)$.
\end{proposition}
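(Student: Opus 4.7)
The plan is to verify the identity first on simple functions in the ring $\Escr$, where it holds by the very definition of $\Ms$, and then to pass to the general case $f\in L^1(\m)\cap L^2(\m)$ by approximation, using that the modes of convergence induced by $\m$ on the integrands translate into compatible modes of convergence for $\M(f_n)$, $\Ms(f_n)$, and $\m(f_n)$.

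First, if $f=\sum_{i=1}^{k}c_i1_{B_i}$ with pairwise disjoint $B_i\in\Escr$, then by the definition of $\Ms$ on simple functions and the $\om$-wise definition of $\M$ we have $\Ms(f)=\sum_{i=1}^k c_i\Ms(B_i)=\sum_{i=1}^k c_i(\M(B_i)-\m(B_i))=\M(f)-\m(f)$, with both sides well defined since $\m(B_i)<+\infty$. Linearity in the simple case thus gives the desired identity.

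Next, for a general $f\in L^1(\m)\cap L^2(\m)$, I would choose a sequence $(f_n)$ of simple functions of the above form with $|f_n|\le|f|$ and $f_n\to f$ $\m$-a.e.; this is possible because $\{|f|>0\}$ has $\sigma$-finite $\m$-measure (it lies in a countable union of sets in $\Escr$, since $|f|\ge\ep$ has finite $\m$-measure for every $\ep>0$ by $f\in L^1(\m)$). Dominated convergence then yields $f_n\to f$ in both $L^1(\m)$ and $L^2(\m)$. Consequently $\m(f_n)\to\m(f)$ in $\Rbb$; by isometry $\Ms(f_n)\to\Ms(f)$ in $L^2(\Pbb)$, hence in probability; and
\[
\Ebb[|\M(f_n)-\M(f_m)|]\le\Ebb[\M(|f_n-f_m|)]=\m(|f_n-f_m|)\to0,
\]
so $(\M(f_n))$ is Cauchy in $L^1(\Pbb)$ with limit equal to $\M(f)$ a.s., since the $\om$-wise integrals satisfy $\M(f_n)\to\M(f)$ a.s.\ by dominated convergence applied path-by-path (using $\M(|f|)<+\infty$ a.s., which follows from $\Ebb[\M(|f|)]=\m(|f|)<+\infty$). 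Passing to limits (in probability) in the identity $\Ms(f_n)=\M(f_n)-\m(f_n)$ yields $\Ms(f)=\M(f)-\m(f)$.

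The only mildly delicate point is the a.s.\ finiteness of $\M(|f|)$ for the path-by-path application of dominated convergence controlling $\M(f_n)\to\M(f)$; this is handled by the identity $\Ebb[\M(|f|)]=\m(|f|)$ valid for nonnegative measurable integrands, which itself follows from monotone convergence from the simple-function case. Everything else is standard isometry and linearity, so this is the main obstacle but it is minor.
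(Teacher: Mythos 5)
Your proof is correct, and since the paper explicitly leaves this proposition to the reader, your argument is exactly the intended one: verify the identity on simple functions over the ring $\Escr$ (where it is the definition of $\Ms$), then pass to the limit using the $\sigma$-finiteness of $\{|f|>0\}$, the $L\p2(\m)$-isometry for $\Ms$, and pathwise dominated convergence for $\M$ justified by $\Ebb[\M(|f|)]=\m(|f|)<+\infty$. All the limit interchanges are properly justified, so there is nothing to add.
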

The It\^o--L\'evy decomposition of $L$ asserts that there exists a Wiener process relative to $\Fbb$ with \emph{variance function} $\sig\p{\,2}(t):=\sig\p{\,2}\,t$, say $\Ws$, such that the following decomposition holds:
\begin{equation}\label{eq:il.dec}
L_{\,t}=\bt t+\Ws_{\,t}+\M(1_{[0,\,t]\times\{|x|>1\}}\,x)+\Ms(1_{[0,\,t]\times\{|x|\leq1\}}\,x),\quad t\in[0,T],\quad\textnormal{a.s.},
\end{equation}
where $\bt\in\Rbb$, cf.\ \citet{K02}, Theorem 15.4. We call the triplet $(\bt,\sig\p{\,2},\nu)$ the characteristics of $L$ and the process $\Ws$ the Gaussian part of $L$. For a L\'evy process $L$ with Gaussian part $\Ws$ and L\'evy measure $\nu$, we introduce the measure $\mu$ by
\begin{equation}\label{def:mu.meas}
\mu:=\sig\p2\delta_0+\nu\,,
\end{equation}
where $\delta_0$ denotes the Dirac measure in the origin. Since $\nu(\{0\})=0$, without loss of generality we can assume that $f(0)=0$ for every $f\in L\p p(\nu)$, $p\in[1,+\infty]$. With this convention, $L\p p(\nu)$ is a subspace of $L\p p(\mu)$. For any $f\in L\p{\,2}(\mu)$ we introduce the martingale $\X{f}=(\X{f}_{\,t})_{t\geq0}$ by
\begin{equation}\label{eq:en.mar}
\X{f}_{\,t}=f(0)\Ws_t+\Ms(1_{[0,\,t]}1_{\Rbb\setminus\{0\}}f),\quad t\in[0,T].
\end{equation}
We stress that for $f=1_{\{0\}}$ we get $\X{f}=\Ws$ as a special case.

For a measure $\varrho$, we use the notation $\varrho(f):=\int_\Rbb f(x)\,\varrho(\rmd x)$ if the integral on the right exists.
\begin{theorem}\label{thm:en.mar}
For every $f\in L\p{\,2}(\mu)$ the process $\X{f}$ defined by \eqref{eq:en.mar} has the following properties:

\textnormal{(i)} $(\X{f},\Fbb\p L)$ is a L\'evy process and $\Ebb[(\X{f}_{\,t})\p{\,2}]=t\,\mu(f\p{\,2})<+\infty$.

\textnormal{(ii)} $\X{f}\in\Hscr\p{\,2}(\Fbb\p L)$ and $\pb{\X{f}}{\X{g}}_{\,t}=t\mu(fg)$, $f,g\in L\p{\,2}(\mu)\ $.

\textnormal{(iii)} $\Delta\X{f}=f(\Delta L)1_{\{\Delta L\neq0\}}$ a.s.\ and $\X{f}$ is locally bounded if $f$ is bounded.

\textnormal{(iv)} $\X{f}=0$ a.s.\ if and only if $f=0$ $\mu$-a.e.

\textnormal{(v)} $\X{f}$ and $\X{g}$ are orthogonal if and only if $f,g\in L\p{\,2}(\mu)$ are orthogonal.
\end{theorem}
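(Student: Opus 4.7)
The plan is to exploit the decomposition $\X{f}=f(0)\Ws+N\p{(f)}$, where $N\p{(f)}_t:=\Ms(1_{[0,t]}1_{\Rbb\setminus\{0\}}f)$, treating the continuous Gaussian component and the purely discontinuous compensated-Poisson component separately. The crucial input from the It\^o--L\'evy decomposition is that the Brownian part $\Ws$ is independent of the Poisson random measure $\M$, hence also of $N\p{(f)}$, and that both have independent increments relative to $\Fbb\p L$. Once (i) and (ii) are established, (iv) and (v) will drop out painlessly, so the bulk of the work concentrates on the L\'evy property, the covariation identity, and the jump formula.

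For (i), I would first show that $N\p{(f)}$ is a square integrable $\Fbb\p L$-L\'evy martingale with variance function $t\mapsto t\nu(f\p2)$. This is transparent for simple $f$ with support in $\Escr$, where $N\p{(f)}$ is a finite linear combination of compensated time-homogeneous Poisson processes; the general case follows by $L\p2$-approximation and the isometry of $\Ms$. Combined with the fact that $\Ws$ is a Wiener process independent of $N\p{(f)}$, this yields that $\X{f}$ is a L\'evy process relative to $\Fbb\p L$, and the second-moment identity is then the sum of the two independent variances, which equals $t\mu(f\p2)$ because $\mu=\sig\p2\delta_0+\nu$ and $\nu(\{0\})=0$. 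For (ii), $\X{f}\in\Hscr\p2$ follows from (i); for the covariation I would use bilinearity, the orthogonality of the continuous martingale $\Ws$ and the purely discontinuous martingale $N\p{(g)}$ (so that cross terms vanish), and the identity $\pb{N\p{(f)}}{N\p{(g)}}_t=t\nu(fg1_{\Rbb\setminus\{0\}})$ (once more simple functions plus $L\p2$-isometry), arriving at $\pb{\X{f}}{\X{g}}_t=t\mu(fg)$.

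For (iii), since $\Ws$ is continuous, all jumps of $\X{f}$ come from $N\p{(f)}$; for simple $f$ the formula $\Delta N\p{(f)}_t=f(\Delta L_t)1_{\{\Delta L_t\neq 0\}}$ is immediate from the definition of $\Ms$, and the extension to general $f\in L\p2(\mu)$ is obtained by $L\p2$-approximation together with Doob's inequality to control the approximating $\Hscr\p2$-martingales uniformly on $[0,T]$. Local boundedness when $f$ is bounded follows because $f(0)\Ws$ is locally bounded and the jumps of $N\p{(f)}$ are uniformly bounded by $\|f\|_\infty$, so localising by the exit times of $\Ws$ from $[-n,n]$ reduces $\X{f}$ to a bounded process. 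Finally, (iv) is immediate from (i): vanishing of $\X{f}$ forces $\Ebb[(\X{f}_t)\p2]=t\mu(f\p2)=0$, hence $f=0$ $\mu$-a.e., with the converse trivial by the c\`adl\`ag property; and (v) follows from (ii) because $\X{f}_0=\X{g}_0=0$ reduces orthogonality in $\Hscr\p2$ to $\pb{\X{f}}{\X{g}}\equiv 0$, i.e.\ $\mu(fg)=0$.

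The main obstacle I expect is the pathwise identification of $\Delta N\p{(f)}$ in (iii) for general $f\in L\p2(\mu)$: the integral $\Ms(\cdot)$ is constructed only through an $L\p2$-isometry without an automatic pathwise description, so transferring the transparent simple-function jump formula to the $L\p2$-limit requires a careful uniform estimate (typically truncation plus Doob's inequality, so that the approximating martingales converge also in the pathwise sense of jumps). Everything else amounts to classical computations for Brownian motion and for compensated Poisson integrals of simple functions, combined with standard $L\p2$-limiting procedures.
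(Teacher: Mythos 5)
The paper states Theorem \ref{thm:en.mar} \emph{without proof} (it is treated as a standard fact about integrals with respect to the Gaussian part and the compensated Poisson random measure), so there is no argument of the authors' to compare against; judged on its own merits, your proposal is correct and follows exactly the route the statement invites: split $\X{f}=f(0)\Ws+\Ms(1_{[0,\cdot]}1_{\Rbb\setminus\{0\}}f)$, verify everything explicitly for simple functions, and pass to the limit using the isometry of $\Ms$ together with Doob's inequality. All five parts go through as you describe. In particular, your handling of the delicate point in (iii) --- transferring the pathwise jump formula through the $L\p{\,2}$-limit --- is sound: Doob's inequality yields a.s.\ uniform convergence of a subsequence of the approximating martingales, while the $\nu$-null set on which the approximating simple functions fail to converge to $f$ is a.s.\ never visited by the jumps of $L$, since the expected number of such visits is $T$ times its $\nu$-measure, i.e.\ zero. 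The one slip is in the local-boundedness argument of (iii): stopping at the exit times of $\Ws$ from $[-n,n]$ controls only the Gaussian component and does not bound $\X{f}$. You should instead stop at $\tau_n:=\inf\{t:\,|\X{f}_t|>n\}$, whereupon $|{\X{f}}\p{\tau_n}|\leq n+\|f\|_\infty$ because the jumps of $\X{f}$ are bounded by $\|f\|_\infty$ (the standard fact that a c\`adl\`ag adapted process with bounded jumps and bounded initial value is locally bounded). This is a one-line repair and does not affect anything else in your argument.
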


\paragraph{Compensated-Covariation Stable Families.}
As a preliminary step, given a L\'evy process $L$ with characteristics $(\bt,\sig\p{\,2},\nu)$, our aim is to construct compen\-sated-covariation stable families of $\Fbb\p L$-martingales possessing the CRP on $L\p2(\Om,\Fscr_T\p L,\Pbb)$.

Let $(L,\Fbb\p L)$ be a L\'evy process with Gaussian part $\Ws$ and jump measure $\M$; $\Fbb\p{\Ws}=(\Fscr_{\,t}\p{\Ws})_{t\in[0,T]}$ denotes the completion in $\Fscr\p L_T$ of the filtration generated by $\Ws$.  For every $t\in[0,T]$ we introduce the $\sig$-algebra $\Fscr\p\M_{\,t}:=\sig(\{\M(A),\ A\in\Bscr(E),\ A\subseteq[0,t]\times\Rbb\})\vee\Nscr(\Pbb)$, where $\Nscr(\Pbb)$ denotes the system of the $\Pbb$-null sets of $\Fscr\p{\,L}_{\,T}$, and put $\Fbb\p\M=(\Fscr_{\,t}\p{\M})_{t\in[0,T]}$. It is easy to see that $\Fbb\p L=\Fbb\p{\Ws}\vee\Fbb\p\M$.

We shall consider systems $\Cscr$ of real functions with the following properties:
\begin{assumption}\label{ass:good.sys}
(i) $\Cscr\subseteq L\p{\,1}(\mu)\cap L\p{\,2}(\mu)$;
(ii) $\Cscr$ is total in $L\p{\,2}(\mu)$;
(iii) $\Cscr$ is stable under multiplication and $1_{\Rbb\setminus\{0\}}f\in\Cscr$ whenever $f\in\Cscr$;
(iv) $\Cscr$ is a system of bounded functions.
\end{assumption}
Notice that a system $\Cscr$ satisfying Assumption \ref{ass:good.sys} always exists: An example can easily be constructed taking $\Cscr:=\{f=c1_{\{0\}}+1_{(a,\,b]},\ a,b\in\Rbb:a<b,\ 0\notin[a,b];\ c\in\Rbb\}\cup\{0\}$.

For a system $\Cscr$ satisfying Assumption \ref{ass:good.sys} we introduce the set $\tilde\Cscr$ of all $\tilde f:=1_{\Rbb\setminus\{0\}}\,f$, $f\in\Cscr$. Recalling the convention above, we observe that $\tilde\Cscr\subseteq L\p{\,1}(\nu)\cap L\p{\,2}(\nu)$, is total in $L\p{\,2}(\nu)$ and is stable under multiplication. We also define the family
\begin{equation}\label{eq:good.fam}
\Xscr_\Cscr:=\{\X{f},\quad f\in\Cscr\}
\end{equation}
where the martingales $\X{f}$ are introduced in \eqref{eq:en.mar}.
Then the following proposition holds:
\begin{proposition}\label{prop:good.fam.com.cov}
The family $\Xscr_\Cscr$ is a compensated-covariation stable family of $\Fbb\p L$-martingales in $\Hscr\p{\,2}(\Fbb\p L)$. Moreover $\Xscr_\Cscr$ generates the filtration $\Fbb\p L$; $\Ebb[\exp(\lm|X_{\,t}|)]<+\infty$ for every $X\in\Xscr_\Cscr$, $\lm>0$, $t\in[0,T]$, and $\pb XY$ is deterministic for every $X,Y\in\Xscr_\Cscr$.
\end{proposition}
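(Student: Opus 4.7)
The proposition comprises four assertions: determinism of $\pb{\X{f}}{\X{g}}$, compensated-covariation stability of $\Xscr_\Cscr$, finiteness of all exponential moments of $\X{f}_t$, and the filtration identity $\Fbb^{\Xscr_\Cscr}=\Fbb\p L$. The first is immediate from Theorem \ref{thm:en.mar}(ii), which yields $\pb{\X{f}}{\X{g}}_t=t\mu(fg)$, a deterministic linear function of $t$. I would treat the remaining three in sequence, with the filtration identity left for last as the main obstacle.

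For compensated-covariation stability, fix $f,g\in\Cscr$ and compute $[\X{f},\X{g}]$ from the decomposition \eqref{eq:en.mar}. The continuous martingale parts contribute $f(0)g(0)\sig\p2\,t$, while Theorem \ref{thm:en.mar}(iii) rewrites the jump sum as $\sum_{s\leq t}\tilde f(\Delta L_s)\tilde g(\Delta L_s)1_{\{\Delta L_s\neq 0\}}=\M(1_{[0,t]}h)$ with $h:=1_{\Rbb\setminus\{0\}}fg$. Subtracting $\pb{\X{f}}{\X{g}}_t=t\sig\p2 f(0)g(0)+t\nu(h)$ and applying Proposition \ref{prop:int.M.Ms}, whose hypothesis holds because the boundedness of $f,g$ and Assumption \ref{ass:good.sys}(i) imply $h\in L\p1(\nu)\cap L\p2(\nu)$, gives $[\X{f},\X{g}]-\pb{\X{f}}{\X{g}}=\Ms(1_{[0,\cdot]}h)$. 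Since $h(0)=0$, this process coincides with $\X{h}$; and Assumption \ref{ass:good.sys}(iii) places $h$ in $\Cscr$, so $\X{h}\in\Xscr_\Cscr$ and stability follows.

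For the exponential moments, I would use that $(\X{f},\Fbb\p L)$ is a L\'evy process by Theorem \ref{thm:en.mar}(i), whose L\'evy measure is the image of $\nu$ under the bounded function $\tilde f$ and therefore has compact support. Combined with the finite second moment coming from $f\in L\p2(\mu)$, this makes the L\'evy--Khintchine exponent finite for every real argument, so $\Ebb[\exp(\lm\X{f}_t)]<+\infty$ for every $\lm\in\Rbb$. Applying this to $\pm\lm$ yields $\Ebb[\exp(\lm|\X{f}_t|)]\leq\Ebb[\exp(\lm\X{f}_t)]+\Ebb[\exp(-\lm\X{f}_t)]<+\infty$.

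The inclusion $\Fbb^{\Xscr_\Cscr}\subseteq\Fbb\p L$ is obvious. For the converse I would exploit that $f\mapsto\X{f}$ is an isometry of $L\p2(\mu)$ into $\Hscr\p2$ by Theorem \ref{thm:en.mar}(i)--(ii): totality of $\Cscr$ in $L\p2(\mu)$ implies that every $f\in L\p2(\mu)$ is an $L\p2(\mu)$-limit of elements of $\Span(\Cscr)$, so $\X{f}_t$ is an $L\p2(\Pbb)$-limit, and hence an a.s.\ limit along a subsequence, of random variables constructed from $\Xscr_\Cscr$; thus $\X{f}_t$ is $\Fscr^{\Xscr_\Cscr}_t$-measurable. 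Taking $f=1_{\{0\}}$ recovers $\Ws_t$, and taking $f=1_B$ for Borel sets $B$ with $0\notin B$ and $\nu(B)<+\infty$ recovers $\Ms(1_{[0,t]}1_B)=\M([0,t]\times B)-t\nu(B)$, from which $\M([0,t]\times B)$ itself is obtained by subtracting a deterministic quantity. Varying $t$ and $B$ over a generating subring produces all of $\Fbb\p{\M}$; combined with $\Fbb\p{\Ws}$, this yields $\Fbb\p L=\Fbb\p{\Ws}\vee\Fbb\p{\M}\subseteq\Fbb^{\Xscr_\Cscr}$. The main obstacle is precisely this last step: one has to combine the $L\p2(\mu)$-approximation with the subsequence-extraction trick in a way that propagates measurability cleanly from the generators of $\Cscr$ to the full jump measure $\M$.
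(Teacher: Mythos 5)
Your proposal is correct and follows essentially the same route as the paper: the deterministic brackets come from Theorem \ref{thm:en.mar}(ii), the compensated-covariation stability from the identical computation $[\X{f},\X{g}]-\pb{\X{f}}{\X{g}}=\Ms(1_{[0,\cdot]}h)=\X{h}$ with $h=fg1_{\Rbb\setminus\{0\}}\in\Cscr$ via Proposition \ref{prop:int.M.Ms}, and the exponential moments from the bounded support of the image L\'evy measure $\nu\circ\tilde f\p{\,-1}$. The only (minor) divergence is in the filtration identity, where the paper runs a monotone class argument directly on $\sigma(\M(1_{[0,t]}\tilde f),\ \tilde f\in\tilde\Cscr)$ using $B_n\uparrow\Rbb$ with $\nu(B_n)<+\infty$, whereas you first use the isometry $f\mapsto\X{f}$ and the totality of $\Cscr$ to reach the indicators $1_B$ and hence $\M([0,t]\times B)$; both variants still need the same final monotone-class step to pass from product sets to all of $\Fscr\p{\,\M}_T$, which you correctly identify as the remaining work.
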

\begin{proof}
It is clear that $\Xscr_\Cscr\subseteq\Hscr\p{\,2}(\Fbb\p L)$. Now we show that $\Fscr\p{\,\Xscr_\Cscr}_{\,T}=\Fscr\p L_T$ ($=\Fscr\p{\,\Ws}_{\,T}\vee\Fscr\p{\,\M}_{\,T}$). By assumption we have $f\in L\p1(\mu)\cap L\p2(\mu)$ for $f\in\Cscr$. From this it follows that $\tilde f:=1_{\Rbb\setminus\{0\}}\,f$ belongs to $L\p1(\nu)\cap L\p2(\nu)$ and an application of Proposition \ref{prop:int.M.Ms} yields $\X{f}_{\,t}=f(0)\,\Ws_t+\M(1_{[0,\,t]}\tilde f)-t\nu(\tilde f)$, $t\geq 0$. We set \[\Gscr:=\sigma(\{\M(1_{[0,\,t]}\tilde f),\quad t\in[0,T]\ , f\in\tilde\Cscr\})\vee\Nscr(\Pbb).\] Recall that $\tilde\Cscr$ is total in $L\p2(\nu)$.
It is evident that $\Fscr_T\p{\Xscr_\Cscr}=\Fscr_T\p{\Ws}\vee\Gscr$ and therefore it is enough to verify that $\Gscr=\Fscr\p{\,\M}_{\,T}$. Recalling the definition of $\Fscr\p{\,\M}_{\,T}$ we easily obtain that $\M(1_{[0,\,\cdot]}g)$ is $\Fscr\p{\,\M}_{\,T}$-measurable for every nonnegative measurable function $g$. Since $\tilde\Cscr\subseteq L\p{\,1}(\nu)$, we can write $\M(1_{[0,\,t]}\tilde f)=\M(1_{[0,\,t]}\tilde f^+)-\M(1_{[0,\,t]}\tilde f^-)$  a.s.\ for every $\tilde f\in\tilde\Cscr$ and $t\geq 0$, which is $\Fscr\p{\,\M}_{\,T}$-measurable. This yields the inclusion $\Gscr\vee\nolinebreak[4]\Nscr(\Pbb)\subseteq\Fscr\p{\,\M}_{\,T}$. Conversely, let $B_n\subseteq\Rbb$ be such that $B_n\uparrow\Rbb$ and $\nu(B_n)<+\infty$ for all $n\geq 1$. Using the monotone class theorem we deduce that $\M(1_{[0,\,t]\times B_n}\,g)$ is $\Gscr\vee\Nscr(\Pbb)$-measurable for every bounded measurable function $g$ on $E=[0,T]\times \Rbb$ and hence for $g=1_A$ with $A\in \Bscr(E)$. Finally, letting $n
 \rightarrow\infty$, we observe that $\M(([0,t]\times\Rbb)\cap A)$ is $\Gscr\vee\Nscr(\Pbb)$-measurable for all $A\in \Bscr(E)$, proving the inclusion $\Fscr\p{\,\M}_{\,T}\subseteq\Gscr\vee\Nscr(\Pbb)$. Next we show that the family $\Kscr$ of monomials generated by $\Xscr_\Cscr$ is total in $L\p2(\Om,\Fscr\p{\Xscr_\Cscr}_T,\Pbb)$ and hence, from the previous step, in $L\p2(\Om,\Fscr\p L_T,\Pbb)$. Indeed,
the L\'evy measure of $\X{f}$ is $\nu\p{\tilde f}$, where $\nu\p {\tilde f}(\{0\}):=0$ and $\nu\p {\tilde f}(B):=\nu\circ \tilde f\p{\,-1}(B)$, $B\in\Bscr(\Rbb\setminus\{0\})$. Because each function $\tilde f$ in $\tilde \Cscr$ is bounded, $\nu\p{\tilde f}$ has bounded support. From \citet{S99}, Lemma 25.6 and 25.7, we can deduce that $\X{f}_{\,t}$ has finite exponential moments of every order for all $t\in[0,T]$ and $f\in\Cscr$. Now the claim follows from Theorem \ref{thm:den.pol}. From Theorem \ref{thm:en.mar} it is clear that for all $f,g\in\Cscr$ the brackets $\aPP{\X{f}}{\X{g}}$ are deterministic.
It remains to show that $\Xscr_\Cscr$ is compensated-covariation stable. Let $f,g\in\Cscr$ and define $h:=fg1_{\Rbb\setminus\{0\}}$. We notice that $h$ again belongs to $\Cscr$. Using \eqref{def:compcov}, \eqref{def:cov}, Theorem \ref{thm:en.mar} (ii), (iii) and  Proposition \ref{prop:int.M.Ms}, we can compute
\begin{eqnarray*}
\X{f,g}_{\,t}
&:=&
[\X{f},\X{g}]_{\,t}-\pb{\X{f}}{\X{g}}_{\,t}
\\&=&f(0)g(0)\sigma^2\,t+
\sum_{0\leq s\leq t}\tilde f(\Delta L_{\,s})\tilde g(\Delta L_{\,s})1_{\{\Delta L_{\,s}\neq0\}}-\mu(fg)\,t\\
&=&\sum_{0\leq s\leq t}h(\Delta L_{\,s})1_{\{\Delta L_{\,s}\neq0\}}-\nu(h)\,t=
\Ms(1_{[0,t]}h)=\X{h}_{\,t},\quad t\in[0,T],\textnormal{ a.s.}
\end{eqnarray*}
Hence $\X{f,g}$ belongs to $\Xscr_\Cscr$ proving that $\Xscr_\Cscr$ is a compensated-covariation stable family.
\end{proof}
As a consequence of Proposition \ref{prop:good.fam.com.cov} and of Theorem \ref{thm:crp.ccs.fam} we get the following result:

\begin{proposition}\label{prop:com.cov.lev.prp}
Let $(L,\Fbb\p L)$ be a L\'evy process with characteristics $(\bt,\sig\p{\,2},\nu)$, $\mu$ be the measure defined in \eqref{def:mu.meas} and $\Cscr\subseteq L\p2(\mu)$ satisfy Assumption \ref{ass:good.sys}. Then the family $\Xscr_\Cscr$ defined in \eqref{eq:good.fam} possesses the CRP on $L\p2(\Om,\Fscr\p L_T,\Pbb)$.
\end{proposition}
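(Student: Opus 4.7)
The plan is to deduce this proposition as a direct application of Theorem \ref{thm:crp.ccs.fam} to the family $\Xscr_\Cscr$, with Proposition \ref{prop:good.fam.com.cov} supplying every hypothesis needed.

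First I would observe that Proposition \ref{prop:good.fam.com.cov} already verifies three of the ingredients required by Theorem \ref{thm:crp.ccs.fam}: the inclusion $\Xscr_\Cscr\subseteq\Hscr\p2(\Fbb\p L)$, the compensated-covariation stability of $\Xscr_\Cscr$, and the fact that $\aPP{\X{f}}{\X{g}}$ is deterministic for all $f,g\in\Cscr$ (by Theorem \ref{thm:en.mar}(ii) the bracket equals $t\,\mu(fg)$). Thus the only assumption still to be checked is Assumption \ref{ass:tot.pol}, i.e. that the system $\Kscr$ of monomials generated by $\Xscr_\Cscr$ is total in $L\p2(\Om,\Fscr\p{\Xscr_\Cscr}_T,\Pbb)$.

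For this, I would invoke Theorem \ref{thm:den.pol}: it suffices to exhibit, for every $f\in\Cscr$ and $t\in[0,T]$, a constant $c(t)>0$ with $\Ebb[\exp(c(t)|\X{f}_t|)]<\infty$. But Proposition \ref{prop:good.fam.com.cov} establishes precisely the stronger statement that $\Ebb[\exp(\lm|\X{f}_t|)]<\infty$ for \emph{every} $\lm>0$ (this uses Assumption \ref{ass:good.sys}(iv) so that the L\'evy measure $\nu\p{\tilde f}$ of $\X{f}$ has bounded support and then \citet{S99}, Lemma 25.6--25.7). Theorem \ref{thm:den.pol} therefore yields Assumption \ref{ass:tot.pol}, and all the hypotheses of Theorem \ref{thm:crp.ccs.fam} are met. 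Consequently $\Xscr_\Cscr$ possesses the CRP on $L\p2(\Om,\Fscr\p{\Xscr_\Cscr}_T,\Pbb)$.

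Finally, I would conclude by identifying the reference $\sigma$-algebra. Proposition \ref{prop:good.fam.com.cov} shows that $\Xscr_\Cscr$ generates the filtration $\Fbb\p L$, so $\Fscr\p{\Xscr_\Cscr}_T=\Fscr\p L_T$, and the CRP on $L\p2(\Om,\Fscr\p{\Xscr_\Cscr}_T,\Pbb)$ is exactly the CRP on $L\p2(\Om,\Fscr\p L_T,\Pbb)$ claimed in the proposition. There is no real obstacle here; the entire argument is a bookkeeping verification of the hypotheses of Theorem \ref{thm:crp.ccs.fam}, since the non-trivial analytic content has been packaged into Proposition \ref{prop:good.fam.com.cov} and Theorem \ref{thm:den.pol}. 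The only conceptual point worth spelling out is the passage from $\Fscr\p{\Xscr_\Cscr}_T$ to $\Fscr\p L_T$, and that too has been done in the course of Proposition \ref{prop:good.fam.com.cov}.
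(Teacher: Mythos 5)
Your argument is correct and coincides with the paper's own proof, which likewise deduces the claim by combining Proposition \ref{prop:good.fam.com.cov} and Theorem \ref{thm:den.pol} to verify all hypotheses of Theorem \ref{thm:crp.ccs.fam}. Your explicit remark on the identification $\Fscr\p{\Xscr_\Cscr}_T=\Fscr\p L_T$ is a useful clarification but adds nothing beyond what Proposition \ref{prop:good.fam.com.cov} already establishes.
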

\begin{proof}
From Proposition \ref{prop:good.fam.com.cov} and Theorem \ref{thm:den.pol} it follows that the family $\Xscr_\Cscr$ satisfies all the assumptions of Theorem \ref{thm:crp.ccs.fam} and therefore it possesses the CRP on $L\p2(\Om,\Fscr\p L_T,\Pbb)$.
\end{proof}
\paragraph{General Families of Martingales with the CRP}
Let $(L,\Fbb\p L)$ be a L\'evy process with characteristic triplet $(\bt,\sig\p2,\nu)$ and let $\mu$ be as in \eqref{def:mu.meas}.
With a system $\Tscr\subseteq L\p{\,2}(\mu)$, we associate the family $\Xscr_\Tscr$ by
\begin{equation}\label{def:fam.tscr}
\Xscr_\Tscr:=\{\X{f},\quad f\in\Tscr\}\,.
\end{equation}
Now we give necessary and sufficient conditions on $\Tscr$ for $\Xscr_\Tscr$ to possess the CRP on $L\p2(\Om,\Fscr\p L_T,\Pbb)$. 

We stress that in general the family $\Xscr_\Tscr$  need not satisfy all the assumptions of Theorem \ref{thm:crp.ccs.fam}. In particular, the family $\Xscr_\Tscr$ need not be compensated-covariation stable or possess exponential moments.
\begin{theorem}\label{thm:tot.sys.h2.prp}
Let $\Tscr$ be a system of functions in $L\p{\,2}(\mu)$, where $\mu$ is defined in \eqref{def:mu.meas}. The family $\Xscr_\Tscr$ defined in \eqref{def:fam.tscr} possesses the CRP with respect to $\Fbb\p L$ if and only if $\Tscr$ is total in $L\p{\,2}(\mu)$.
\end{theorem}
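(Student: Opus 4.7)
The map $f\mapsto \X{f}$ from $L^2(\mu)$ to $\Hscr^2(\Fbb^L)$ is essentially isometric, since by Theorem \ref{thm:en.mar}(ii) one has $\|\X{f}\|^2_{\Hscr^2}=T\mu(f^2)$. Hence totality of a system $\Tscr\subseteq L^2(\mu)$ translates into $\Hscr^2$-totality of $\Xscr_\Tscr$ inside the closed subspace $\{\X{f}:f\in L^2(\mu)\}$. I will combine this with three already-proved ingredients: Proposition \ref{prop:com.cov.lev.prp} (a canonical family $\Xscr_\Cscr$ associated with a system $\Cscr$ satisfying Assumption \ref{ass:good.sys} possesses the CRP on $L^2(\Om,\Fscr^L_T,\Pbb)$), Theorem \ref{thm:cl.st.it} ($\Hscr^2$-closure of the linear span does not change the space of iterated integrals), and the remark at the end of Section \ref{sec:def.it.int} that CRP implies PRP.

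\textbf{Sufficiency.} Fix a system $\Cscr$ satisfying Assumption \ref{ass:good.sys}; such a system exists. Then $\Xscr_\Cscr$ has the CRP on $L^2(\Om,\Fscr^L_T,\Pbb)$ by Proposition \ref{prop:com.cov.lev.prp}. Since $\Tscr$ is total in $L^2(\mu)$, for each $f\in\Cscr$ there exists a sequence $(f_n)$ in $\Span(\Tscr)$ with $\mu((f-f_n)^2)\to 0$, and the $L^2(\mu)$-isometry gives $\X{f_n}\to \X{f}$ in $\Hscr^2$. Consequently $\Xscr_\Cscr\subseteq\cl(\Span(\Xscr_\Tscr))_{\Hscr^2}$, and Theorem \ref{thm:cl.st.it} yields $\Jscr^{\Xscr_\Cscr}=\Jscr^{\Xscr_\Tscr}$. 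Doob's inequality together with the same $\Hscr^2$-approximation shows $\Fscr^{\Xscr_\Cscr}_T\subseteq\Fscr^{\Xscr_\Tscr}_T$; combined with $\Fscr^{\Xscr_\Cscr}_T=\Fscr^L_T$ from Proposition \ref{prop:good.fam.com.cov} and the trivial inclusion $\Fscr^{\Xscr_\Tscr}_T\subseteq\Fscr^L_T$, this gives $\Fscr^{\Xscr_\Tscr}_T=\Fscr^L_T$. Passing to terminal variables yields $\Jscr_T^{\Xscr_\Tscr}=\Jscr_T^{\Xscr_\Cscr}=L^2(\Om,\Fscr^L_T,\Pbb)$, which is the CRP for $\Xscr_\Tscr$.

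\textbf{Necessity.} Suppose $\Xscr_\Tscr$ possesses the CRP on $L^2(\Om,\Fscr^L_T,\Pbb)$. Then, by the discussion following Proposition \ref{prop:equivCRP}, $\Xscr_\Tscr$ possesses the PRP, i.e., $\Lscr^2(\Xscr_\Tscr\cup\{1\})=\Hscr^2(\Fbb^L)$. Assume, for contradiction, that $\Tscr$ is not total in $L^2(\mu)$. Then there exists $g\in L^2(\mu)$ with $g\ne 0$ $\mu$-a.e.\ and $\mu(fg)=0$ for every $f\in\Tscr$. By Theorem \ref{thm:en.mar}(v), $\X{g}\bot \X{f}$ in $\Hscr^2$ for every $f\in\Tscr$. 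Orthogonality in $\Hscr^2$ is preserved under stochastic integration and under closure of stable subspaces (Section \ref{sec:prel}), so $\X{g}$ is orthogonal to the whole stable subspace $\Lscr^2(\Xscr_\Tscr\cup\{1\})=\Hscr^2(\Fbb^L)$. But $\X{g}$ itself belongs to $\Hscr^2(\Fbb^L)$ and $\X{g}_0=0$, so $\X{g}\bot\X{g}$, meaning $(\X{g})^2$ is a martingale starting at zero; hence $\Ebb[(\X{g}_T)^2]=0$ and $\X{g}=0$. By Theorem \ref{thm:en.mar}(iv) this forces $g=0$ $\mu$-a.e., a contradiction.

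\textbf{Anticipated obstacle.} The only genuinely delicate point is the filtration identification in the sufficiency direction: CRP is a statement about $L^2(\Om,\Fscr^L_T,\Pbb)$, so before invoking the CRP of the auxiliary family $\Xscr_\Cscr$ I must make sure $\Xscr_\Tscr$ generates (up to $\Pbb$-null sets) the full filtration $\Fbb^L$, which is exactly where the $\Hscr^2$-approximation of $\Xscr_\Cscr$ by linear combinations from $\Xscr_\Tscr$ is essential. Every other step is a direct invocation of a result already established in Sections \ref{sec:def.it.int}--\ref{sec:chaos} or of the elementary isometry from Theorem \ref{thm:en.mar}.
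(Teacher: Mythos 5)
Your proof is correct. The sufficiency direction coincides with the paper's: both fix an auxiliary system $\Cscr$ satisfying Assumption \ref{ass:good.sys}, invoke Proposition \ref{prop:com.cov.lev.prp} for its CRP, use the isometry $f\mapsto\X{f}$ to obtain $\Xscr_\Cscr\subseteq\cl(\Span(\Xscr_\Tscr))_{\Hscr\p{\,2}}$, and conclude via Theorem \ref{thm:cl.st.it}. (Your worry about identifying $\Fscr\p{\Xscr_\Tscr}_T$ with $\Fscr\p L_T$ is legitimate but not needed as a separate step: the conclusion $\Jscr\p{\Xscr_\Tscr}_T=\Jscr\p{\Xscr_\Cscr}_T=L\p2(\Om,\Fscr\p L_T,\Pbb)$ already is the CRP in the sense of Definition \ref{def:crp}.) The necessity direction is where you genuinely diverge: the paper shows directly, by computing $\aPP{J_n\p{(f_1,\ldots,f_n)}(F)}{\X{h}}=0$ for every elementary iterated integral, that $\X{h}_T$ is orthogonal to the whole chaos space $\Jscr\p{\Xscr_\Tscr}_T$ and hence vanishes, whereas you route through the implication CRP $\Rightarrow$ PRP from the end of Section \ref{sec:def.it.int} and the fact that martingale orthogonality propagates to the generated stable subspace $\Lscr\p2(\Xscr_\Tscr\cup\{1\})=\Hscr\p{\,2}(\Fbb\p L)$, forcing $\X{g}\bot\X{g}$ and hence $\X{g}=0$. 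Your version is shorter and leans on Jacod's stable-subspace machinery (which the paper only cites in passing), while the paper's argument is more self-contained and stays entirely within the iterated-integral framework it has built; both are sound, and the CRP $\Rightarrow$ PRP step does apply here because $\aPP{\X{f}}{\X{g}}_t=t\mu(fg)$ is deterministic. One small wording correction: non-totality of $\Tscr$ yields a $g\in L\p2(\mu)$ that is not $\mu$-a.e.\ equal to zero, not a $g$ with $g\neq0$ $\mu$-a.e.
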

\begin{proof}
First we assume that the family $\Xscr_\Tscr$ possesses the CRP and show that $\Tscr$ is total in $L\p{\,2}(\mu)$. We choose a function $h\in L\p2(\mu)$ which is orthogonal to $\Tscr$ in $L\p2(\mu)$. By Theorem \ref{thm:en.mar} (v), the martingale $\X{h}\in\Hscr\p{\,2}$ associated with $h$ is orthogonal to $\Xscr_\Tscr$. For an elementary iterated integral
\begin{equation*}
J_{n}\p{(f_1,\ldots, f_{n})}(F)_{\,t}
:=\int_0^t J_{n-1}^{(f_1,\ldots, f_{n-1})}(F_0\otimes\cdots\otimes F_{n-1})_{u-}\,F_{n}(u)\, \rmd {\X{f_{n}}_u},\ \ t\in [0,T], \ \ n\geq 1\,,
\end{equation*}
with respect to $(\X{f_1}, \X{f_2}, \ldots\ , \X{f_n})$, $f_k\in\Tscr$, $k=1,\ldots,n$, where $F=F_0\otimes\cdots\otimes F_{n}$ is an elementary function (see Definition \ref{elem.it.int.}), we obtain
$$
\aPP{J_{n}\p{(f_1,\ldots, f_{n})}(F)}{\X{h}}_t=\int_0^t J_{n-1}^{(f_1,\ldots, f_{n-1})}(F_0\otimes\cdots\otimes F_{n-1})_{u-}\,F_{n}(u)\, \rmd \aPP{{\X{f_{n}}}}{\X{h}}_u=0,\ \ t\in [0,T]\,.
$$
Hence the martingales $\X{h}$ and $J_{n}\p{(f_1,\ldots, f_{n})}(F)$ are orthogonal. It is clear that $\X{h}$ is also orthogonal to $\Jscr_0$. This implies that the terminal value $\X{h}_T$ is orthogonal to $J_{n}\p{(f_1,\ldots, f_{n})}(F)_T$ and also orthogonal to $\Jscr_{0,T}$ in $L\p2(\Om,\Fscr_T\p L,\Pbb)$. Recalling the construction of the space $\Jscr\p{\Xscr_\Tscr}_T$ of the iterated integrals generated by $\Xscr_\Tscr$ (cf. Definition \ref{def:sp.mul.int} and Proposition \ref{prop:equivCRP} (i)), we observe that the system of elementary iterated integrals of order $n$ ($n\geq0$) is total in $\Jscr\p{\Xscr_\Tscr}_T$. Consequently, $\X{h}_T$ is orthogonal to $\Jscr\p{\Xscr_\Tscr}_T$. By definition of the CRP, $\Jscr\p{\Xscr_\Tscr}_T=L\p2(\Om,\Fscr_T\p L,\Pbb)$ and hence $\X{h}_{\,T}=0$. From this we deduce that the martingale $\X{h}$ is indistinguishable from the null-process and by Theorem \ref{thm:en.mar} (iv) it follows $h=0$ $\mu$-a.e. This proves that $\Tscr$ is total in $L
 \p{\,2}(\mu)$. Conversely, we now assume that $\Tscr$ is total in $L\p{\,2}(\mu)$ and show that $\Xscr_\Tscr$ has the CRP on $L\p2(\Om,\Fscr_T\p L,\Pbb)$. For this purpose we consider a system $\Cscr$ satisfying Assumption \ref{ass:good.sys}. From Proposition \ref{prop:com.cov.lev.prp} we know that $\Xscr_\Cscr$ has the CRP on $L\p2(\Om,\Fscr_T\p L,\Pbb)$. We denote by $\Jscr\p{\Xscr_\Cscr}$ the space of iterated integrals generated by $\Xscr_\Cscr$. It is enough to prove that $\Jscr\p{\Xscr_\Cscr}=\Jscr\p{\Xscr_\Tscr}$. Because $\Tscr$ is total in $L\p{\,2}(\mu)$ and the mapping $f\mapsto\X{f}$ is linear and isometric from $L\p{\,2}(T\mu)$ into $\Hscr^2$ (see Theorem \ref{thm:en.mar}), we immediately establish $\cl(\Span(\Xscr_\Tscr))_{\Hscr^2}=\Xscr_{L\p{\,2}(\mu)}$ and hence the inclusion $\Xscr_\Cscr\subseteq\cl(\Span(\Xscr_\Tscr))_{\Hscr^2}$ holds. Using Theorem \ref{thm:cl.st.it} for $\Zscr=\Xscr_\Tscr$ we conclude $\Jscr\p{\Xscr_\Cscr
 }=\Jscr\p{\Xscr_\Tscr}$, proving the claim. This completes the proof of the theorem.
\end{proof}
We remark that if $L$ is a square integrable L\'evy process, or equivalently the function $x$ belongs to $L\p2(\nu)$, we can choose the total system $\Tscr\subseteq L\p2(\mu)$ in such a way that the function $h:=1_{\{0\}}+x$ belongs to $\Tscr$. In this case we have that $\X{h}=\Ls$, where $\Ls_{\,t}:=L_{\,t}-\Ebb[L_{\,t}]$, $t\in[0,T]$. In other words, the L\'evy process $\Ls$ can be included in the family $\Xscr_\Tscr$.

An important question is in which cases it is possible to choose a family $\Xscr_\Tscr$ consisting of finitely many martingales and possessing the CRP. The next corollary explains that this is possible only in a rather few cases.
\begin{corollary}\label{cor:fin.fam}
Let $(L,\Fbb\p L)$ be a L\'evy process with characteristics $(\bt,\sig\p{\,2},\nu)$. The following statements are equivalent:
\textnormal{(i)} There exists a finite family $\Xscr_\Tscr$  possessing the CRP on $L\p2(\Om,\Fscr_T\p L,\Pbb)$;
\textnormal{(ii)} $L\p{\,2}(\mu)$ is finite-dimensional;
\textnormal{(iii)} $\nu$ has finite support.
\end{corollary}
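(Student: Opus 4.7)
The plan is to reduce everything to Theorem \ref{thm:tot.sys.h2.prp}, which states that $\Xscr_\Tscr$ possesses the CRP if and only if $\Tscr$ is total in $L\p2(\mu)$. With this characterization in hand, the corollary becomes a statement about the dimension and support of measures.

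First I would prove (i) $\Rightarrow$ (ii). If $\Tscr$ is a finite system such that $\Xscr_\Tscr$ has the CRP, then by Theorem \ref{thm:tot.sys.h2.prp} the linear hull $\Span(\Tscr)$ is dense in $L\p2(\mu)$. Since $\Span(\Tscr)$ is finite-dimensional, it is automatically closed, so $\Span(\Tscr) = L\p2(\mu)$ and (ii) follows.

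Next I would establish the equivalence (ii) $\Leftrightarrow$ (iii). Recall $\mu = \sig\p2 \delta_0 + \nu$ with $\nu(\{0\}) = 0$. For the direction (iii) $\Rightarrow$ (ii): if $\nu$ has finite support $\{x_1,\ldots,x_n\} \subseteq \Rbb\setminus\{0\}$, then $\mu$ is supported on $\{0,x_1,\ldots,x_n\}$ (with the atom at $0$ possibly having zero mass if $\sig\p2=0$), and hence $L\p2(\mu)$ is spanned by the indicators of the atoms with positive mass, giving $\dim L\p2(\mu) \leq n+1$. For (ii) $\Rightarrow$ (iii), I would argue by contraposition: if $\nu$ has infinite support, I can choose inductively a sequence of pairwise disjoint Borel sets $A_k \subseteq \Rbb\setminus\{0\}$, $k \geq 1$, each with $0 < \nu(A_k) < +\infty$ (this is possible since $\nu$ is $\sig$-finite on $\Rbb\setminus\{0\}$ with $x\p2\wedge 1 \in L\p1(\nu)$ and infinite support). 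The indicators $1_{A_k} \in L\p2(\mu)$ are then nonzero and pairwise orthogonal, hence linearly independent, so $L\p2(\mu)$ is infinite-dimensional, contradicting (ii).

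Finally, for (iii) $\Rightarrow$ (i) — which in view of the already established (iii) $\Rightarrow$ (ii) amounts to (ii) $\Rightarrow$ (i) — I would pick any finite basis $\Tscr = \{f_1,\ldots, f_N\}$ of the finite-dimensional Hilbert space $L\p2(\mu)$. Such a $\Tscr$ is trivially total in $L\p2(\mu)$, so Theorem \ref{thm:tot.sys.h2.prp} yields that $\Xscr_\Tscr = \{\X{f_1},\ldots,\X{f_N}\}$ is a finite family possessing the CRP on $L\p2(\Om,\Fscr\p L_T,\Pbb)$, which is exactly (i). No step here poses a serious obstacle; the only item requiring genuine care is the construction of infinitely many independent indicators in (ii) $\Rightarrow$ (iii), which rests on the $\sig$-finiteness of $\nu$ away from the origin.
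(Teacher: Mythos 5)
Your proof is correct and follows exactly the route the paper intends: the paper states Corollary \ref{cor:fin.fam} without proof as an immediate consequence of Theorem \ref{thm:tot.sys.h2.prp}, and your reduction of (i)$\Leftrightarrow$(ii) to the closedness of finite-dimensional subspaces, together with the standard measure-theoretic equivalence (ii)$\Leftrightarrow$(iii), is the intended argument. The only point deserving the care you already flag is choosing the disjoint sets of positive \emph{finite} $\nu$-measure away from the origin, which the integrability of $x\p{\,2}\wedge 1$ guarantees.
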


The situation discussed in Corollary \ref{cor:fin.fam} occurs if $L$ is a \emph{simple} L\'evy process, i.e., it is of the form
\[
L_{\,t}:=\Ws_{\,t}+\alpha_1\,N_{\,t}\p1 +\ldots+\alpha_m\,N_{\,t}\p m
\,\quad t\geq0\,,\]
where $(\Ws,\Fbb\p L) $ is a Brownian motion with variance function $\langle\Ws,\Ws\rangle_{\,t}=\sig\p{\,2}\,t$; $(N\p j,\Fbb\p L)$ a homogeneous Poisson process with parameter $\gamma_j>0$, $j=1,\ldots,m$, and $(N\p1,\ldots,N\p m)$ is an independent vector of processes; $\alpha_1,\ldots,\alpha_m$ are real numbers.

If $\Tscr\subseteq L\p{\,2}(\mu)$ is a \emph{complete orthogonal system}, say $\Tscr:=\{f_n,\ n\geq1\}$ (note that $L\p{\,2}(\mu)$ is a separable Hilbert space), then the associated family $\Xscr_\Tscr$ consists of countably many mutually orthogonal martingales (cf.\ Theorem \ref{thm:en.mar} (v)) and Theorem \ref{thm:tot.sys.h2.prp}, Theorem \ref{thm:mul.int.orth.sum} and \eqref{eq:crp.or.sys} yield the following theorem. Note that $\Jscr_0=\Jscr_{0,T}=\mb R$ because the $\sig$-field $\Fscr^{\Xscr_\Tscr}_0=\Fscr\p{\,L}_0$ is $\mb P$-trivial.
\begin{theorem}\label{thm:cou.ort.sys.prp}
Let $(L,\Fbb\p L)$ be a L\'evy process with characteristics $(\bt,\sig\p2,\nu)$ and $\Tscr=\{f_n,\ n\geq1\}$ be a complete orthogonal system in $L\p{\,2}(\mu)$ where $\mu$ is as in \eqref{def:mu.meas}. Then the associated family $\Xscr=\Xscr_\Tscr$ has the CRP on $L\p2(\Om,\Fscr_T\p L,\Pbb)$ and the following decompositions hold:
\begin{equation}\label{eq:crp.or.sys.lp}
\Hscr^2(\mb F^{\Xscr})=\mb R\oplus\bigoplus_{n=1}\p\infty
\bigoplus_{(j_1,\ldots,j_n)\in\Nbb\p n} \Jscr_{n}\p{(f_{j_1},\ldots,f_{j_n})}, \ \ L\p2(\Om,\Fscr\p{\Xscr}_T,\Pbb)=
\Rbb\oplus\Bigg(\bigoplus_{n=1}\p\infty
\bigoplus_{(j_1,\ldots,j_n)\in\Nbb\p n}\Jscr_{n,T}\p{(f_{j_1},\ldots,f_{j_n})}\Bigg)\,,
\end{equation}
where, for $f_1,\ldots,f_n\in\Tscr$, $\Jscr_{n}\p{(f_{1},\ldots,f_{n})}$ denotes the linear space of $n$-fold iterated integrals with respect to $(\X{f_1},\ldots,\X{f_n})$; $n\geq1$.
\end{theorem}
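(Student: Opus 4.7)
The plan is to assemble three previously established results in a direct fashion: Theorem \ref{thm:tot.sys.h2.prp} delivers the CRP, Theorem \ref{thm:en.mar} supplies the structural facts about the martingales $\X{f_n}$, and Theorem \ref{thm:mul.int.orth.sum} together with \eqref{eq:crp.or.sys} then unpacks this into the explicit orthogonal decompositions in \eqref{eq:crp.or.sys.lp}.

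First I would observe that, by definition, a complete orthogonal system in a Hilbert space has dense linear hull, so $\Tscr$ is total in $L\p2(\mu)$. Theorem \ref{thm:tot.sys.h2.prp} applied to this total system then gives, with no further work, that $\Xscr_\Tscr = \{\X{f_n},\ n\geq 1\}$ possesses the CRP on $L\p2(\Om,\Fscr\p L_T,\Pbb)$.

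Next I would verify the hypotheses needed to invoke Theorem \ref{thm:mul.int.orth.sum} for the family $\Xscr_\Tscr$. By Theorem \ref{thm:en.mar}(ii), $\aP{\X{f_n}}_{\,t} = t\,\mu(f_n\p2)$ is deterministic for every $n$, and by Theorem \ref{thm:en.mar}(v), the mutual $L\p2(\mu)$-orthogonality of the $f_n$ translates into mutual orthogonality of the martingales $\X{f_n}$ in $\Hscr\p{\,2}$. Hence $\Xscr_\Tscr$ meets the standing hypotheses of Theorem \ref{thm:mul.int.orth.sum}, which together with Proposition \ref{prop:equivCRP} and \eqref{eq:crp.or.sys} gives exactly the sums in \eqref{eq:crp.or.sys.lp}, apart from the identification of the zeroth chaos.

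Finally, for that last point I would note that since $L_0 = 0$ the $\sig$-field $\Fscr\p L_0$, and hence $\Fscr\p{\Xscr_\Tscr}_0$, is $\Pbb$-trivial; therefore every $\Fscr_0$-measurable square integrable random variable is $\Pbb$-a.s.\ constant, so $\Jscr_{0,T} = \Jscr_0 = \Rbb$ in the identification of Definition \ref{def:sp.mul.int}(i). Substituting $\Rbb$ for $\Jscr_0$ and $\Jscr_{0,T}$ in \eqref{eq:crp.or.sys} produces \eqref{eq:crp.or.sys.lp}. I do not anticipate any genuine obstacle: the whole argument is assembly, and the only point that requires a brief explicit remark is the triviality of $\Fscr\p L_0$ that collapses the zeroth chaos to the constants.
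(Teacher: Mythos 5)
Your proposal is correct and takes essentially the same route as the paper: the paper likewise derives totality of $\Tscr$ from completeness, obtains the CRP from Theorem \ref{thm:tot.sys.h2.prp}, combines Theorem \ref{thm:en.mar} (for the deterministic brackets and the mutual orthogonality of the $\X{f_n}$) with Theorem \ref{thm:mul.int.orth.sum} and \eqref{eq:crp.or.sys} to get the decomposition, and notes that $\Jscr_0=\Jscr_{0,T}=\Rbb$ because $\Fscr\p{\,L}_0$ is $\Pbb$-trivial. No gaps.
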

\paragraph{Multiple It\^o Integrals and Iterated Integrals} \label{sec:itoiter}
The aim of this subsection is to establish the relation between \emph{multiple} It\^o integrals introduced in \citet{I56} and \emph{iterated} integrals introduced in Section \ref{sec:def.it.int}.

Let $L$ be a L\'evy process with characteristics $(\beta,\sigma^2,\nu)$. The measure $\mu$ is defined in \eqref{def:mu.meas} while the martingale $\X{f}$ for $f\in L\p2(\mu)$ in \eqref{eq:en.mar} above.

We introduce the measures $\zt$ and $\et$ by
$$
\zt(B):=\int_B x\p{\,2} \nu(\d x), \ B\in \Bscr(\mb R),\quad \et=\sig\p{\,2}\delta_{\,0}+\zt\,.
$$
For any square integrable function $G$ on $([0,T]\times \mb R)^n$ with respect to the measure $(\lambda_+\otimes \et)^n$ (recall that $\lm_+$ denotes the Lebesgue measure on $\Rbb_+$) the \textit{multiple} It\^o integral $I_n(G)$ of order $n$ is defined as follows: For $n=1$ and $G_1\in L^2(\lm_+\otimes\et)$ we put
$$
I_1(G_1)_t:=G_1(\cdot,0)\cdot \Ws_t+\Ms(1_{[0,t]}\,x\,G_1),\quad t\in [0,T]\,.
$$
Now, if $G=1_{A_1}\otimes\cdots\otimes1_{A_n}$ with $A_1, \ldots A_n\in \Bscr([0,T]\times \mb R)$ pairwise disjoint and $(\lambda_+\otimes \et)(A_i)<+\infty$,  $i=1,\ldots, n$, then we define
$$
I_n(G)_t:=I_1(1_{A_1})_t\cdots I_1(1_{A_n})_t,\quad t\in [0,T]\,.
$$
This system of functions $G$ is total in the $L^2$-space over $([0,T]\times \mb R)^n$ equipped with the product measure $(\lambda_+\otimes \et)^n$ (cf. \citet{I51}, Theorem 2.1). The mapping $G\mapsto I_n(G)_t$ can now be extended to the space $L^2(([0,T]\times \mb R)^n, (\lambda_+\otimes \et)^n)$ by linearity and continuity (see \citet{I56} for more details).

Let $f_1,\ldots , f_n$ from $L^2(\mu)$ be normalized. Note that then the measures $m^{(f_i)}$ on $[0,T]$ associated with $\aP {\X{f_i}}$ are equal to $\lm_+$, $i=1,\ldots, n$, and therefore the space $L^2(\Om\times [0,T]^n,m^{(f_1,\ldots, f_n)}_{\mb P})$ coincides with $L^2(\Om\times[0,T]^n,\mb P\otimes \lm_+^n)$. For any $F$ from $L^2([0,T]^n):=L^2([0,T]^n,\lm_+^n)$ the function $1\otimes F$ belongs to the Hilbert space $L^2(\Om\times[0,T]^n,\mb P\otimes \lm_+^n)$ and we can define $J_n^{(f_1,\ldots f_n)}(1\otimes F)=(J_n^{(f_1,\ldots f_n)}(1\otimes F)_t)_{t\in[0,T]}$ with respect to the martingales $(\X{f_1},\ldots, \X{f_n})$ (cf.\ Definition \ref{def:mul.st.int}).

With a real function $g$ defined on $\mb R$ we associate the function $\hat{g}$ defining
\begin{equation}\label{def:fx}
\hat{g}(x)=\begin{cases}
xg(x),& \mbox{if} \ x\not=0,\\
g(0), & \mbox{if} \ x=0 \,.
\end{cases}
\end{equation}
If $g\in L^2(\et)$, then the associated function $\hat{g}$ belongs to $L^2(\mu)$ and conversely. Obviously, we have $\|g\|_{L^2(\et)}=\|\hat{g}\|_{L^2(\mu)}$. Now we consider normalized $g_1,\ldots,g_n\in L\p2(\et)$ and put $f_i:=\hat{g}_i$, $i=1, \ldots, n$. Then $f_1, \ldots, f_n\in L^2(\mu)$ are normalized functions, too. Furthermore we choose $F\in L^2([0,T]^n)$. The function $G=F\,g_1\otimes \cdots \otimes g_n$ clearly belongs to $L^2((\lm_+\otimes \et)^n)$ and hence the multiple integral $I_n(G)$ is well-defined. In the next proposition we denote the set of all permutations of $\{1,\ldots n\}$ by $\Pi_n$ and its generic element by $\pi=(i_1,\ldots, i_n)$. The mapping $\sigma_\pi$ on $[0,T]^n$ is defined as $\sigma_\pi(t_1,\ldots,t_n)=(t_{i_1}, \dots, t_{i_n})$.
\begin{proposition}\label{prop:rel.it.iter}
Let $F\in L^2([0,T]^n)$, $g_1,\ldots,g_n\in L\p2(\et)$ be normalized and define $f_i:=\hat{g}_i$, $i=1, \dots, n$, as in \eqref{def:fx}. Then the following relation between the multiple It\^o integral and the iterated integral holds:
\[
I_n(F\,g_1\otimes \cdots \otimes g_n)_t=\sum_{\pi=(i_1,\ldots, i_n)\in \Pi_n} J_n^{(f_{i_1},\ldots,f_{i_n})}(1\otimes (F\circ \sigma_\pi))_t,\quad t\in [0,T]\,.
\]
In particular, recalling the notation  $M_t\p{(n)}$ of \eqref{eq:def.Mn},
\[
I_n(F\,1_{M_t^{(n)}}\,g_1\otimes \cdots \otimes g_n)_t= J_n^{(f_1,\ldots,f_n)}(1\otimes F)_t,\quad t\in [0,T]\,.
\]
\end{proposition}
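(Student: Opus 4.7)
The second (``in particular'') identity is the core; the first follows from it by symmetrization. Both sides of the identity $I_n(F\,1_{M_t^{(n)}}\,g_1\otimes\cdots\otimes g_n)_t=J_n^{(f_1,\ldots,f_n)}(1\otimes F)_t$ are continuous linear maps in $F\in L^2(M_t^{(n)},\lambda_+^n)$: for the left-hand side this is the $L^2$-isometry of $I_n$ combined with $\|g_i\|_{L^2(\et)}=1$, and for the right-hand side it is the isometry \eqref{eq:isom} combined with $m^{(f_i)}=\lambda_+$, which holds because $f_i$ is normalized. It therefore suffices to verify the identity on any total subset of $L^2(M_t^{(n)})$. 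I would take functions $F=F_1\otimes\cdots\otimes F_n$ with $F_k=1_{(a_k,b_k]}$ and $0\leq a_1<b_1\leq a_2<b_2\leq\cdots\leq a_n<b_n\leq t$: their linear span is dense in $L^2(M_t^{(n)})$, and they are supported strictly inside the simplex (so $F\,1_{M_t^{(n)}}=F$) with pairwise disjoint time projections.

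On such an $F$, both sides should factorise into $\prod_{k=1}^n\int_0^t F_k(s)\,\rmd \X{f_k}_s$. For the left-hand side, one first checks from the definition of $I_1$ and the construction $f_k=\hat g_k$ that $I_1(F_k g_k)_t=\int_0^t F_k(s)\,\rmd \X{f_k}_s$, and then uses the multiplicative property built into the definition of $I_n$: the formula $I_n(1_{A_1}\otimes\cdots\otimes 1_{A_n})=I_1(1_{A_1})\cdots I_1(1_{A_n})$ on pairwise disjoint indicators extends, by multilinearity and $L^2$-continuity in each argument, to give $I_n(F\,g_1\otimes\cdots\otimes g_n)_t=\prod_k I_1(F_k g_k)_t$ whenever the $F_k$ have pairwise disjoint time supports. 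For the right-hand side an induction on $n$ is natural: on the support $(a_n,b_n]$ of $F_n$ the iterated integral $J_{n-1}^{(f_1,\ldots,f_{n-1})}(1\otimes F_1\otimes\cdots\otimes F_{n-1})_{u-}$ has already reached its terminal value (since all earlier $F_k$ vanish beyond $b_{n-1}\leq a_n$), so the outer stochastic integral decouples and the induction hypothesis closes the argument.

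For the first (general) identity, I would decompose $[0,T]^n$, up to a Lebesgue-null diagonal, as $\bigsqcup_{\pi\in\Pi_n}R_\pi$, where $R_\pi$ is the region on which the coordinates are ordered according to $\pi=(i_1,\ldots,i_n)$. The substitution $t\mapsto\sigma_\pi(t)$ sends $M_T^{(n)}$ bijectively onto $R_\pi$ and transforms $1_{R_\pi}\,F\,g_1\otimes\cdots\otimes g_n$ into $1_{M_T^{(n)}}\,(F\circ\sigma_\pi)\,g_{i_1}\otimes\cdots\otimes g_{i_n}$; invariance of $I_n$ under simultaneous permutation of its $n$ coordinates, which is immediate from its definition on indicator tensors, identifies $I_n$ evaluated on the two functions. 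Summing over $\pi\in\Pi_n$ and applying the already-proved particular identity to each piece produces the general statement.

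The main obstacle I anticipate is the multiplicativity of $I_n$ on tensor factors with disjoint time supports: it holds by definition only on pairwise disjoint indicator tensors, and its extension to general $L^2$-tensors with disjoint time supports requires a careful double-limit argument (in the time variable and in the space variable) together with the $L^2$-continuity of the construction. The accompanying permutation bookkeeping in the symmetrization step is conceptually transparent but needs attention in matching indices, in particular verifying that the pushforward of $\lambda_+^n$ under $\sigma_\pi$ is again $\lambda_+^n$ so that the isometry extension transfers cleanly from $M_T^{(n)}$ to each $R_\pi$.
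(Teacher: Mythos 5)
Your strategy is sound but runs in the opposite direction to the paper's. The paper proves the \emph{symmetrized} identity first: it takes $F_i=1_{B_i}$ with $B_1,\ldots,B_n$ pairwise disjoint Borel subsets of $[0,T]$ (not ordered intervals) and $g_i=c_i1_{C_i}$, so that the tensor factors $F_ig_i$ are indicators of pairwise disjoint subsets of $[0,T]\times\Rbb$ and $I_n(F\,g_1\otimes\cdots\otimes g_n)_t=\prod_i\int_0^tF_i(u)\,\rmd\X{f_i}_u$ holds \emph{by definition}; since the disjointness of the $B_i$ makes all brackets $[Z\p{\,i},Z\p{\,j}]$ vanish, It\^o's product formula turns this product into $\sum_i\int_0^t\prod_{k\neq i}Z\p{\,k}_{u-}\,F_i(u)\,\rmd\X{f_i}_u$, and an induction produces exactly the sum over all permutations. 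Only after that does the paper extend, by linearity and isometry, first in $F$ and then in each $g_i$ separately --- and at that final stage both sides of the identity are single (multiple resp.\ iterated) integrals, hence genuinely $L\p2$-continuous in each $g_i$. The ``in particular'' statement then falls out by restricting $F$ to the simplex, where only the identity permutation survives modulo null sets. Your route (simplex identity on ordered interval tensors first, then symmetrization over the regions $R_\pi$) is viable and somewhat more geometric, and your treatment of the right-hand side (the inner iterated integral freezing at its terminal value on the support of $F_n$) is correct.

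The one step where your justification as written would fail is the multiplicativity $I_n(F\,g_1\otimes\cdots\otimes g_n)_t=\prod_kI_1(F_kg_k)_t$ for \emph{general} normalized $g_k$: you invoke ``multilinearity and $L\p2$-continuity in each argument'', but the map $(X_1,\ldots,X_n)\mapsto\prod_kX_k$ is not continuous from $(L\p2(\Pbb))\p n$ to $L\p2(\Pbb)$, so $L\p2$-convergence of the factors does not by itself allow passage to the limit in the product. What rescues the argument is precisely the disjointness of the time supports: the random variables $I_1(F_kg_k)$, $k=1,\ldots,n$, are \emph{independent}, being measurable with respect to the restrictions of $\Ws$ and $\Ms$ to disjoint time intervals, so for simple approximations $g_k\p{(m)}\rightarrow g_k$ a telescoping estimate using $\Ebb[\prod_k|\cdot|\p2]=\prod_k\Ebb[|\cdot|\p2]$ yields $L\p2$-convergence of the products. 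You should either supply this independence argument explicitly, or reorganise as the paper does: prove the identity for indicator $g_i$ (where multiplicativity is definitional) and defer the extension in the $g_i$ to the very end, where no products of stochastic integrals appear any more.
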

\begin{proof}
Let $F=F_1\otimes\cdots \otimes F_n$ be such that $F_i=1_{B_i}$, $i=1,\ldots, n$; $B_1, \ldots, B_n\in \Bscr([0,T])$ and pairwise disjoint. The normalized functions $g_1, \ldots, g_n$ are chosen such that $g_i=c_i\,1_{C_i}$, $C_i\in \Bscr(\mb R)$, $0<\et(C_i)<+\infty$, $c_i>0$, $i=1, \ldots, n$. Using the definition of It\^o's multiple integral and of $\X{f_i}$ (cf.\ \eqref{eq:en.mar}) with $f_i:=\hat{g}_i$ we obtain
$$
I_n(F\,g_1\otimes\ldots\otimes g_n)_t=\prod_{i=1}^n \int_0^t F_i(u)\,\d \X{f_i}_u,\quad t\in [0,T]\,.
$$
Because $B_1, \ldots, B_n$ are pairwise disjoint, setting $Z\p{\,i}:=\int_0^\cdot F_i(u)\,\d \X{f_i}_{
\,u}$, for  $i,j=1,\ldots,n$, such that $i\not=j$, it follows $[Z\p{\,i},Z\p{\,j}]=0$. Therefore, an application of It\^o's formula yields
$$
\prod_{i=1}^n \int_0^t F_i(u)\,\d \X{f_i}_u=\sum_{i=1}^n\int_0^t \prod_{\fraco{k=1}{k\not=i}}^n \int_0^{u-} F_k(s)\,\d \X{f_k}_s F_i(u)\, \d \X{f_i}_u,\quad t\in [0,T]\,.
$$
By an induction argument we can now derive
\begin{equation}\label{id}
I_n(F\,g_1\otimes\ldots\otimes g_n)_t=\sum_{\pi=(i_1,\ldots, i_n)\in \Pi_n} J_n^{(f_{i_1},\ldots,f_{i_n})}(F\circ \sigma_\pi)_t,\quad t\in [0,T]\,.
\end{equation}
The set of functions $F$ considered in the previous step is total in $L^2([0,T]^n)$ (cf.\ e.g. \citet{I51}, Theorem 2.1). The right and left hand sides of identity \eqref{id} are linear and continuous in $F$ on $L^2([0,T]^n)$. Hence \eqref{id} is valid for all $F\in L^2([0,T]^n)$.
Now we fix $F\in L^2([0,T]^n)$. From the previous step we know that \eqref{id} is valid for all $g_1, \ldots, g_n$ chosen as in the first step of the proof. Clearly, the set of all normalized $g_1=c_1\,1_{C_1}$ with $C_1\in \Bscr(\mb R)$ and $0<\et(C_1)<+\infty$, $c_1>0$, is total in $L^2(\et)$. The right and left hand sides of identity \eqref{id} being linear and continuous in $g_1$ on $L^2(\et)$, identity \eqref{id} remains valid for all $g_1\in L^2(\et)$. Repeating the argument for all $i=2, \ldots, n$ yields that \eqref{id} is valid for all $g_1, \ldots, g_n\in L^2(\et)$. The proof of the proposition is now complete.
\end{proof}
By $\Hscr_{n,T}$ we denote the closed linear subspace of $L^2(\Pbb):=L^2(\Om,\mc F^L_T,\Pbb)$ consisting of the terminal values $I_n(G)_T$ of all the multiple It\^o integrals of order $n$. It\^o's chaos expansion (see \citet{I56}, Theorem 2) states the following decomposition of $L\p2(\Pbb)$:
\begin{equation}\label{eq:it.ch}
L^2(\Pbb)=\Rbb\oplus\bigoplus_{n=1}\p\infty\Hscr_{n,T}\,.
\end{equation}
The decomposition \eqref{eq:it.ch} can be deduced as a consequence of Theorem \ref{thm:tot.sys.h2.prp}. Indeed, the linear space $\Jscr_T$ of terminal values of all iterated integrals with respect to $\Xscr_\Tscr:=\{\X{f}: \ f\in \Tscr\}$, where $\Tscr\subseteq L\p2(\mu)$ is a total set, is dense in $L^2(\Pbb)$. But from Proposition \ref{prop:rel.it.iter}, we deduce $\Jscr_T\subseteq\Rbb\oplus\bigoplus_{n=1}\p\infty \Hscr_n(T)$.

\smallskip
We conclude this subsection deriving Theorem \ref{thm:tot.sys.h2.prp} from the It\^o chaos decomposition.
If $(g_k)_{k\geq1}$ is a complete orthonormal system in $L^2(\et)$,
$
(g_{k_1\ldots\ k_n})_{(k_1, \dots, k_n)\in \mb N^n}:=(g_{k_1}\otimes \cdots \otimes g_{k_n})_{(k_1, \dots, k_n)\in \mb N^n}
$
is a complete orthonormal system in $L^2(\mb R^n,\et^n)$. For any function $G\in L^2(([0,T]\times \mb R)^n, (\lambda_+\otimes\et)^n)$ and $(t_1, \ldots, t_n)\in [0,T]^n$ we introduce the function $G_{t_1, \ldots, t_n}$ by
$
G_{t_1, \ldots, t_n}(x_1, \ldots, x_n)=G((t_1,x_1), \ldots, (t_n,x_n))
$, where $(x_1, \ldots, x_n)\in \mb R^n$.
Clearly, the set of all $(t_1, \ldots, t_n)\in [0,T]^n$ such that $G_{t_1, \ldots, t_n}$ does not belong to $L^2(\mb R^n,\et^n)$ is measurable and has zero $\lambda_+^n$-measure. Hence, without loss of generality, we can put $G_{t_1, \ldots, t_n}\equiv 0$ for such points. Consequently, $G_{t_1, \ldots, t_n}$ belongs to $L^2(\mb R^n,\et^n)$ for all $(t_1, \ldots, t_n)$ from $[0,T]^n$. Developing $G_{t_1, \ldots, t_n}$ as Fourier series gives
$
G_{t_1, \ldots, t_n}=\sum_{(k_1, \ldots, k_n)\in \mb N^n} c(t_1, \ldots , t_n; k_1, \ldots k_n)\, g_{k_1\ldots k_n}
$
in $L^2(\mb R^n,\et^n)$ where
$
c(t_1, \ldots , t_n; k_1, \ldots k_n)=(G_{t_1, \ldots, t_n},g_{k_1\ldots k_n})_{L^2(\mb R^n,\et^n)}
$
is measurable in $(t_1, \ldots, t_n)\in [0,T]^n$. Now we verify that
\begin{equation}\label{exp}
G=\sum_{(k_1, \ldots, k_n)\in \mb N^n} c(\cdot, k_1, \ldots k_n) \, g_{k_1\ldots k_n}
\end{equation}
(convergence in $L^2(([0,T]\times\mb R)^n,(\lambda_+\otimes\et)^n)$). We already know that the series converges to $G_{t_1, \ldots, t_n}$ in $L^2(\mb R^n,\et^n)$ and hence in $\et^n$-measure for every $(t_1, \ldots, t_n)\in [0,T]^n$. Therefore the series converges to $G$ in $(\lambda_+\otimes \et)^n$-measure. We introduce the notation $\Nbb^n_m:=\Nbb^n\setminus\{1,\cdots, m\}^n$. For proving the claim we show that
$
\sum_{(k_1, \ldots, k_n)\in\Nbb^n_m} c(\cdot, k_1, \ldots k_n)\, g_{k_1\ldots k_n}
$
converges in $L^2(([0,T]\times\mb R)^n,(\lambda_+\otimes\et)^n)$ to zero as $m\rightarrow\infty$. From the orthonormality of $(g_{k_1\ldots k_n})_{(k_1, \dots, k_n)\in \mb N^n}$ we have
\begin{equation*}
\Vert\sum_{(k_1, \ldots, k_n)\in\Nbb^n_m} c(\cdot, k_1, \ldots k_n)\, g_{k_1\ldots k_n}\Vert^2_{L^2((\lambda_+\otimes\et)^n)}
=\int_{[0,T]^n}\sum_{(k_1, \ldots, k_n)\in\Nbb^n_m} c^2(t_1, \ldots, t_n; k_1, \ldots, k_n)\, \d \lambda_+^n(t_1, \ldots, t_n)\,,
\end{equation*}
the right hand side converging to zero as $m\rightarrow\infty$ because
$\sum_{(k_1, \ldots, k_n)\in\Nbb^n_m} c^2(t_1, \ldots , t_n; k_1, \ldots k_n)\rightarrow 0$ as $m\rightarrow\infty$ for every $(t_1, \ldots, t_n)\in [0,T]^n$ (note that $c(t_1, \ldots , t_n; k_1, \ldots k_n)$ are Fourier coefficients) and is bounded by the integrable function $\sum_{(k_1, \ldots, k_n)\in\Nbb^n} c^2(t_1, \ldots , t_n; k_1, \ldots k_n)$:
\begin{equation*}
\int_{[0,T]}\sum_{(k_1, \ldots, k_n)\in\Nbb^n} c^2(t_1, \ldots , t_n; k_1, \ldots k_n)\, \d \lambda_+^n(t_1,\ldots, t_n)=\|G\|^2_{L^2((\lambda_+\otimes\et)^n)}<+\infty\,.
\end{equation*}
Hence the Fourier expansion \eqref{exp} of $G$ is verified.
As a result, using the continuity of $I_n$ and Proposition \ref{prop:rel.it.iter} above, we get
\begin{equation*}
\begin{split}
I_n(G)_t&=\sum_{(k_1, \ldots, k_n)\in\Nbb^n} I_n(c(\cdot, k_1, \ldots, k_n)\, g_{k_1\cdots k_n})_t\\
&{}=\sum_{(k_1, \ldots, k_n)\in\Nbb^n}\sum_{\pi=(i_1, \ldots, i_n)\in \Pi_n} J_n^{(f_{k_{i_1}}, \ldots, f_{k_{i_n}})}(c(\cdot,k_1,\ldots,k_n)\circ \sigma_\pi)_t\,.
\end{split}
\end{equation*}
Applying It\^o's chaos expansion, from this we get $
L^2(\mb P)=\mb R\otimes\bigoplus_{n=1}\p\infty \Jscr_{n,T}$
where $\Jscr_{n,T}$ is the closed linear space of iterated integrals of order $n$ with respect to the \emph{orthogonal} family  $\{\X{f_k}: \ k\in \mb N\}$ of martingales (cf.\ Definition \ref{def:sp.mul.int}) and $f_k:=\hat{g}_k,\ k\in\Nbb$ (cf.\ \eqref{def:fx}). Clearly, $(f_k)_{k\in\Nbb}$ is a complete orthonormal system of $L^2(\mu)$. Conversely, we can also start from a complete orthonormal system $(f_k)_{k\in\Nbb}$ of $L^2(\mu)$ and construct the complete orthonormal system $(g_k)_{k\in\Nbb}$ of $L^2(\et)$ from it. 

\section{Applications}\label{sec:appl}
In this last section we shall provide a few applications of the main theorem of the present paper, Theorem \ref{thm:crp.ccs.fam} above. This will illustrate, in particular, that several important results on the CRP which have been known before are an immediate consequence of Theorem \ref{thm:crp.ccs.fam}. However, there will also be discussed new examples which are beyond the scope of known results. We start with families of continuous local martingales which are pairwise Gausssian and state the result in a general form which to our knowledge has not been established before. We proceed with families of compensated Poisson processes. Then we pass on to concrete applications of Section \ref{sec:prp.lev} on the CRP of families of martingales related with L\'evy processes: Teugels martingales, families of martingales constructed from Hermite polynomials and from Haar systems.
\vspace{-.3cm}
\paragraph{Gaussian Families.}\mbox{} On a complete probability space $(\Om,\Fscr,\Pbb)$ and with respect to a filtration $\Fbb$ satisfying the usual conditions, we consider a family $\Xscr:=\{\N{\al},\ \al\in \Lm\}$ of continuous local martingales. We shall assume that $\Xscr$ is pairwise Gaussian, i.e., that the pair $(\X{\al},\X{\bt})$ of processes is Gausssian for every $\al, \bt\in\Lm$.
\bg{theorem}\label{Gaussian}
The family $\Xscr$ possesses the CRP relative to $\mb F^\Xscr$.
\e{theorem}
\bg{proof}
First we notice that $\Ebb[\X{\al}_{\,t}]$ is continuous and hence bounded in $t\in[0,T]$ (use characteristic functions to verify this).
Applying Fernique's theorem to the centred Gaussian random variable given by $(\X{\al}_t-\Ebb[\X{\al}_t])_{t\in[0,T]}$ with values in the space $C([0,T])$ of continuous real functions on $[0,T]$, we see that $\sup_{t\in[0,T]}|\X{\al}_t-\Ebb[\X{\al}_t]|$ and hence $\sup_{t\in[0,T]}|\X{\al}_t|$ is integrable. From this we can conclude that every continuous Gaussian local martingale $\X{\al}$ is actually a Gaussian martingale  and hence a process with independent increments\footnote{\,The authors are indebted to M.\ Urusov for pointing out the proof of this fact using Fernique's theorem}.
This yields that $\aP{\X{\al}}$ is deterministic. Applying this observation to $\X{\al}+\X{\bt}$ and $\X{\al}-\X{\bt}$ for $\al\not=\bt$, which are again Gaussian, by the polarization formula we obtain that $\aPP{\X{\al}}{\X{\bt}}$ is deterministic. Obviously, $\X{\al}_t$ has finite exponential moments of arbitrary order for every $t\geq0$ and $\al\in\Lm$. Finally, the compensated-covariation process $[\X{\al},\X{\bt}]-\aPP{\X{\al}}{\X{\bt}}$ equals $0$ because the martingales $\X{\al}$ and $\X{\bt}$ are continuous (cf. \eqref{def:cov}) for every $\al, \bt\in\Lm$. Thus $\Xscr\cup\{0\}$ is compensated-covariation stable. Now, upon using Theorem \ref{thm:den.pol}, Theorem \ref{thm:crp.ccs.fam} yields that $\Xscr\cup\{0\}$, and therefore also $\Xscr$, possesses the CRP.
\e{proof}
If moreover $\Xscr$ is a countable family of orthogonal martingales, say $\Xscr:=\{\X{n},\ n\geq1\}$, then it possesses the CRP on $L\p{\,2}(\Om,\Fscr\p\Xscr_T,\Pbb)$ and \eqref{eq:crp.or.sys} holds.

As a special case we get that a Brownian motion $W$ possesses the CRP relative to its natural filtration $\mb F^W$. This is the well-known result of \citet{I51} about the CRP of the Wiener process.

\vspace{-.3cm}

\paragraph{Poisson Families.}
Let $\Ns:=N-a(\cdot)$ be a compensated Poisson process with \emph{continuous} intensity function $a(\cdot)$ (cf.\ \citet{JS00}, Definition I.3.26). Clearly,  $\Ns$ is a square integrable martingale and we can calculate
\[
\textstyle[\overline N,\overline N]-\pb{\overline N}{\overline N}=\sum_{0\leq s\leq \cdot}(\Delta N_{\,s})\p{\,2}-a(\cdot)=\sum_{0\leq s\leq \cdot}\Delta N_{\,s}-a(\cdot)=\overline N\,.
\]
This observation leads to the following
\bg{theorem}
Let $\Xscr:=\{\N{\al},\ \al\in\Lm\}$ be a family of compensated Poisson processes on the probability space $(\Om,\Fscr\p\Xscr_T,\Pbb)$ and with respect to the filtration $\Fbb\p\Xscr$. If the family $\Xscr$ is pairwise independent then $\Xscr$ possesses the CRP relative to $\Fbb\p\Xscr$.
\e{theorem}
\bg{proof}
If $\al\not=\bt$, then $\X{\al}$ is independent of $\X{\bt}$ which implies that $[\X{\al},\X{\bt}]=0$ and hence also $\aPP{\X{\al}}{\X{\bt}}=0$.
As shown above, $[\X{\al},\X{\al}]-\aPP{\X{\al}}{\X{\al}}=\X{\al}$ and hence $\Xscr\cup\{0\}$ is compensated-covariation stable. The predictable covariations $\aPP{\X{\al}}{\X{\bt}}$ are deterministic for all $\al$ and $\bt$ from $\Lm$. The exponential moments of $\X{\al}_t$ are finite for every $t\geq 0$ and  $\al\in\Lm$. Upon using Theorem \ref{thm:den.pol}, we see that Theorem \ref{thm:crp.ccs.fam} can be applied and we conclude that $\Xscr$ has the CRP on $L\p2(\Om,\Fscr\p\Xscr_T,\Pbb)$.
\e{proof}
If $\Xscr$ is, moreover, a countable family, say $\Xscr:=\{\X{n},\ n\geq1\}$, then \eqref{eq:crp.or.sys} holds.

\begin{remark} We notice that the case of Gaussian and Poisson families can be unified in the following way. Let $\Xscr=\Yscr\cup\Zscr$ a family of square integrable martingales on the probability space $(\Om,\Fscr\p\Xscr_T,\Pbb)$ and with respect to the filtration $\Fbb\p\Xscr$. Suppose that $\Yscr$ is a pairwise Gaussian family of continuous martingales and $\Zscr$ a pairwise independent family of compensated Poisson processes. Then $\Xscr$ possesses the CRP. For the proof, upon recalling the arguments from above, we have only to remark that $[Y,Z]=0$ and hence $\aPP{Y}{Z}=0$ for all $Y\in\Yscr$ and $Z\in\Zscr$.
\end{remark}

\vspace{-.5cm}

\paragraph{Teugels Martingales.}\label{sec:T.ma} \
This example shows that under certain restrictions on the L\'evy measure $\nu$, Teugels martingales can be introduced as a family possessing the CRP on $L\p2(\Om,\Fscr_T\p L,\Pbb)$. Teugels martingales were considered in \citet{NS00} where it was proven that the orthogonalized Teugels martingales possess the CRP. We are going to obtain the CRP for the orthogonalized Teugels martingales as an application of the results of the present paper.

We fix a time horizon $[0,T]$, $T>0$.
Let $(L,\Fbb\p L)$ be a L\'evy process with characteristic triplet $(\bt,\sig\p{\,2},\nu)$. We require that there exist two constants $\ep,\lm>0$ such that the function
$x\mapsto\rme\p{\frac\lm2\,|x|}1_{\{|x|>\ep\}}$ is in $L\p{\,2}(\nu)$.
From this assumption it follows that the function
$x\mapsto x\p{\, n}$ belongs to $L\p1(\nu)\cap L\p{\,2}(\nu)$ for every $n\geq2$ and the identity $x\mapsto x$ is in $L\p{\,2}(\nu)$.
Moreover, the system $\{x\mapsto x\p{\, n},\ n\geq m\}$ is total in $L\p{\,2}(\nu)$ for every $m\geq1$. We set $h_1(x)=1_{\{0\}}+x$ and, for $n\geq2$, $h_n(x)=x\p{\,n}$, $x\in\Rbb$.
Because of $h_n\in L\p{\,2}(\mu)$ with $\mu:=\sig\p2\delta_0+\nu$, $n\geq1$, we can introduce the martingales $\X{h_n}=(\X{h_n}_{\,t})_{t\in[0,T]}$ as in \eqref{eq:en.mar}. With
$
\Tscr:=\{h_n,\ n\geq1\}\subseteq L\p{\,2}(\mu)
$,
we associate
$\Xscr_\Tscr:=\{\X{h_n},\ n\geq1\}$ which turns out to be the family of \emph{Teugels martingales}. From Theorem \ref{thm:tot.sys.h2.prp}, $\Xscr_\Tscr$ has the CRP on $L\p2(\Om,\Fscr_T\p L,\Pbb)$. We stress that, because of the assumptions on the L\'evy measure, the family of Teugels martingales is also compensated-covariation stable and satisfies the assumptions of Theorem \ref{thm:crp.ccs.fam}.
Let $\Pscr$ be the system of polynomials obtained by the Gram--Schmidt orthogonalization of $\Tscr$ in $L\p2(\mu)$. The associated family $\Xscr_\Pscr$ of martingales is the one of the \emph{orthogonalized} Teugels martingales: It possesses the CRP on $L\p2(\Om,\Fscr_T\p L,\Pbb)$ and the decomposition \eqref{eq:crp.or.sys.lp} of $L\p2(\Om,\Fscr_T\p L,\Pbb)$ holds.

\vspace{-.33cm}
\paragraph*{Hermite Polynomials.}\label{subsec:Her.pol} \
We consider a L\'evy process $(L,\Fbb\p L)$ with characteristic triplet $(\bt,\sig\p{\,2},\nu)$, where $\nu$ is of the form
\begin{equation}\label{def:eq.levy.meas}
\rmd\nu(x)=h(x)\,\rmd x,\qquad h(x)>0, \ x\in\Rbb\,.
\end{equation}
An important class of L\'evy processes with L\'evy measure as in \eqref{def:eq.levy.meas} is, for example, the class of $\al$-stable processes (see, e.g., \citet{S99}, Chapter 3).
We begin with the case of the Cauchy process.
A Cauchy process $(L,\Fbb\p L)$ is a purely non-Gaussian L\'evy process with characteristic triplet $(0,0,\nu)$ and $\rmd\nu(x):=x\p{-2}\,\rmd x$.
For a Cauchy process no moment exists and therefore Teugels martingales cannot be introduced.
We choose the function $g$ as $g(x):=x\exp(-x\p{\,2}/2)$, $x\in\Rbb$, and define $\rmd\nu\p g(x):=g\p{\,2}(x)\,\rmd\nu(x)=\exp(-x\p{\,2})\,\rmd x$ which is a finite measure.
Let $(H_n)_{n=0,1,\ldots}$ be the sequence of normalized Hermite polynomials.
The sequence $(H_n)_{n\geq0}$ is an orthonormal basis for $L\p{\,2}(\nu\p g)$. Therefore $\Tscr=\{C_n,\ n\geq0\}\subseteq L\p{\,2}(\nu)$, where
$C_n:=gH_n$, $n\geq0$, is a complete orthonormal system in $L\p{\,2}(\nu)$. Moreover, each $C_n$ is a bounded function. In view of Theorem \ref{thm:en.mar} and Theorem \ref{thm:cou.ort.sys.prp}, the family $\Xscr_\Tscr:=\{\X{C_n},\ n\geq0\}$ is a family of locally bounded orthogonal martingales with the CRP on $L\p2(\Om,\Fscr_T\p L,\Pbb)$ for which the decomposition \eqref{eq:crp.or.sys.lp} of $L\p2(\Om,\Fscr_T\p L,\Pbb)$ holds. Note that the system $\Tscr$ is not stable under multiplication, because the system of Hermite polynomials is not. Therefore, the family $\Xscr_\Tscr$ is not compensated-covariation stable.
For the general case of a L\'evy process with characteristic triplet  $(\bt,\sig\p{\,2},\nu)$ where $\nu$ is as in \eqref{def:eq.levy.meas}, we define $g\in L\p{\,2}(\nu)$ by $g(x):=(h(x)\p{-1/2})\,\exp(-x\p{\,2}/2)$, $x\in\Rbb$, and introduce the system $\Tscr:=\{P_n,\ n\geq0\}$, where $P_n:=g(0)H_n(0)1_{\{0\}}+1_{\Rbb\setminus\{0\}}gH_n$, $n\geq 0$, $(H_n)_{n\geq0}$ being the system of Hermite polynomials. The system $\Tscr$ is an orthogonal basis for $L\p2(\mu)$ and therefore the associated family $\Xscr_\Tscr:=\{\X{P_n},\ n\geq0\}$ is an orthogonal family of martingales possessing the CRP on $L\p2(\Om,\Fscr_T\p L,\Pbb)$ for which the decomposition \eqref{eq:crp.or.sys.lp} of $L\p2(\Om,\Fscr_T\p L,\Pbb)$ holds. In the general case, we cannot expect that $\Xscr_\Tscr$ consists of locally bounded martingales. Furthermore, $\Xscr_\Tscr$ is not compensated-covariation stable. Note that this variety of  examples is beyond earlier known results and techniques on the CRP.

\paragraph*{Haar Wavelet.}\label{subsec:Haar.w}
We consider a L\'evy process $(L,\Fbb\p L)$ with characteristics $(\bt,\sig\p2,\nu)$ and L\'evy measure as in \eqref{def:eq.levy.meas}.
Let $\lm$ be the Lebesgue measure on $(\Rbb,\Bscr(\Rbb))$ and $\psi\in L\p2(\lm)$. The function $\psi$ is called a \emph{wavelet} if the system of functions $\{\psi_{jk}: \psi_{jk}(x)=2\p {\frac j2}\,\psi(2\p j\,x-k),\ x\in\Rbb,\ j,k\in\Zbb\}$ is a complete orthonormal system of $L\p2(\lm)$. An example of a wavelet, known as \emph{Haar wavelet}, is the function defined by
$\psi(x):=1$, if $x\in[0,1/2)$; $\psi(x):=-1$ if $x\in[1/2,1)$ and $\psi(x):=0$ otherwise.
The system $\{\psi_{jk},\ j,k\in\Zbb\}$ generated by $\psi$ is the \emph{Haar basis} (cf.\ \citet{W97}, 1).
The system given by $\Tscr:=\{h\p{-1/2}(0)\psi_{jk}(0)1_{\{0\}}+1_{\Rbb\setminus\{0\}}h\p{-1/2}\psi_{jk},\ j,k\in\Zbb\}$ is a complete orthogonal system in $L\p2(\mu)$. The associated family $\Xscr_\Tscr:=\{\X{f},\ f\in\Tscr\}$ of orthogonal martingales possesses the CRP on $L\p2(\Om,\Fscr_T\p L,\Pbb)$ and the decomposition \eqref{eq:crp.or.sys.lp} of $L\p2(\Om,\Fscr_T\p L,\Pbb)$ holds. This method, leading to interesting families of martingales with the CRP not considered previously, can also be extended to L\'evy processes with arbitrary L\'evy measure $\nu$. 

\section*{Conclusions}We conclude with a short discussion concerning the relevance of the approach and the results given in the present paper. 

The first two examples about families of Gaussian continuous local martingales and pairwise independent Poisson processes are general formulations of existing results for Wiener and Poisson processes. Being immediate and straightforward applications of Theorem \ref{thm:crp.ccs.fam}, these examples demonstrate the power and flexibility of our approach to the chaotic representation property.
 
The most innovative examples of the present paper are those about the construction of families of martingales associated with a L\'evy process which do possess the CRP. Beside the classical case of Brownian motion and compensated Poisson process, only one example, the family of Teugels martingales, has been investigated before by Nualart and Schoutens in \cite{NS00}. However, this example requires the quite strong assumption on the underlying L\'evy process $L$ that $L_t$ ($t>0$) possesses a finite exponential moment of some order $\lm>0$. 

As we have seen in Section \ref{sec:appl}, the case of Teugels martingales can be directly deduced from the general approach given in Section 5 and applied to L\'evy processes in Section 6. It should be emphasized that the family of Teugels martingales (if it can be constructed) is only \textit{one example} of a great variety of families of martingales of a L\'evy process possessing the CRP. In addition, Theorem \ref{thm:tot.sys.h2.prp} (as a consequence of Theorem \ref{thm:crp.ccs.fam}) allows us to construct families of martingales possessing the CRP on $\Fbb\p L$ for an \textit{arbitrary} L\'evy process $L$, \textit{without any assumption} on the L\'evy measure $\nu$. This is outside the scope of other techniques and, in particular, those of Nualart and Schoutens \cite{NS00}. Applications are illustrated in the examples \emph{Hermite Polynomials} and \emph{Haar Wavelet}, where we investigated L\'evy processes for which the L\'evy measure $\nu$ only is equivalent to the Lebesgue me
 asure on $\Rbb$. This is a rather wide class of L\'evy processes, containing also processes, as the Cauchy process, which do not have any finite moments. For such cases it is impossible to follow the techniques of \citet{NS00}.

The assumption of the equivalence of $\nu$ to the Lebesgue measure is not important at all.  As proved in Theorem \ref{thm:tot.sys.h2.prp}, a necessary and sufficient condition for a family $\Xscr_\Tscr$ to possess the CRP with respect to $\Fbb\p L$ is that the system $\Tscr\subseteq L\p{\,2}(\mu)$ is total, $\mu:=\sig\p{\,2}\delta_{\,0}+\nu$, without further assumptions on $\nu$. In particular, such a family of martingales can be associated with any complete orthonormal system $\Tscr=\{f_n: \ n\geq 1\}$ of $L^2(\mu)$. This result is probably one of the most important of the present paper because it allows to construct numerous families of martingales possessing the CRP starting from the characteristics $(\bt,\sig\p{\,2},\nu)$ of a given L\'evy process $L$. We have only to look for adequate complete orthonormal systems $\Tscr$ in $L\p{\,2}(\mu)$. If moreover the L\'evy measure possesses further properties we can use them to construct special families of martingales with the CRP as we
  have seen above. 

This large degree of freedom in the construction of families with the CRP can play an important role in applications, e.g., for the problem of the completion of financial markets driven by  geometric L\'evy processes. In \citet{CNS05}, a L\'evy market driven by a geometric L\'evy process $L$ has been considered and, under the assumptions of \cite{NS00} on the L\'evy measure $\nu$, the market is completed involving \emph{compensated power-jump assets}, i.e., the Teugels martingales. In this context, the Teugels martingales have the interpretation of price processes associated with certain contingent claims that, if included in the market, make it complete, in the sense that any contingent claim of the market can be approximated (\emph{hedged}) trading in the stock and the Teugels martingales. However, by the results of the present paper it becomes clear that for \textit{any} geometric L\'evy market there are many alternative systems of martingales which can serve for its completion an
 d the question arises which are the ``most adequate'' systems from a theoretical and/or practical point of view.

In Section \ref{sec:def.it.int}, \ref{sec:ccs.f.mul.int} and \ref{sec:chaos} one of the most important assumptions on the family $\Xscr$ is that $\aPP{X}{Y}$ is deterministic, $X, Y\in\Xscr$. This hypothesis made possible the definition of iterated integrals ensuring natural isometry properties. On the other side, it is an important premise for the proof of Proposition \ref{prop:tot.term.rv.int} and hence Theorem \ref{thm:crp.ccs.fam}. A major extension of the approach would be to allow predictable covariations which are random. To explore adequate conditions should be the subject of future research.

Finally, we notice that the \emph{compensated-covariation stability} is not a necessary condition for the CRP as we have seen for L\'evy processes in Section 6 (see Theorem \ref{thm:tot.sys.h2.prp}). However, in the general part of the paper (Sections 4 and 5) this property plays an important role (see, e.g., Proposition \ref{MI} and Proposition \ref{prop:rep.pol}). To further exploit the relevance of the compensated-covariation stability for the CRP remains an open task for future research.
{
}
\end{document}